\theoremstyle{plain}
\newtheorem{thm}{Theorem}[section]
\newtheorem{lem}[thm]{Lemma}
\newtheorem{ass}[thm]{Assumption}
\theoremstyle{definition}
\newtheorem{alg}[thm]{Algorithm}
\theoremstyle{remark}
\newtheorem{rem}[thm]{Remark}
\newcommand{\R}{\mathbb{R}}
\newcommand{\N}{\mathbb{N}}
\newcommand{\Z}{\mathbb{Z}}
\newcommand{\F}{\mathcal{F}}
\newcommand{\LL}{\mathcal{L}}
\newcommand{\U}{\mathcal{U}}
\newcommand{\V}{\mathcal{V}}
\newcommand{\D}{\mathcal{D}}
\newcommand{\ii}{{\bf i}}
\newcommand{\jj}{{\bf j}}
\newcommand{\dd}{\textnormal{d}}
\DeclareMathOperator{\Var}{Var}
\DeclareMathOperator{\Span}{span}
\DeclareMathOperator{\supp}{supp}
 \DeclareMathOperator*{\esssup}{esssup}
\DeclareMathOperator*{\arginf}{arginf}
\begin{document}

\title{`Regression Anytime' with Brute-Force SVD Truncation}

\author{Christian Bender$^{1}$ and Nikolaus Schweizer$^2$}

 \maketitle
\footnotetext[1]{Saarland University, Department of Mathematics,
Postfach 151150, D-66041 Saarbr\"ucken, Germany, {\tt
bender@math.uni-sb.de}} \footnotetext[2]{Tilburg University, Department of Econometrics and OR, PO box 90153, NL-5000 LE Tilburg, The Netherlands, {\tt n.f.f.schweizer@uvt.nl}}

\begin{abstract}
We propose a new least-squares Monte Carlo algorithm for the approximation of conditional expectations in the presence of stochastic derivative weights. The algorithm can serve as a building block 
for solving dynamic programming equations, which arise, e.g., in non-linear option pricing problems or in probabilistic discretization schemes for
fully non-linear parabolic partial differential equations.  Our algorithm can be generically 
applied when the underlying dynamics stem from an Euler approximation to a stochastic differential equation. A built-in
variance reduction ensures that the convergence in the number of samples to the true regression function takes place at an arbitrarily fast polynomial 
rate, if the problem under consideration is smooth enough.
\par \bigskip
\noindent \textbf{Keywords:} Monte Carlo simulation, Least-Squares Monte Carlo, Regression Later, Dynamic Programming, BSDEs, Quantitative Finance, Statistical Learning\\

\noindent \textbf{AMS 2000 Classification:} 65C05, 60H35, 62G08, 90C39.

\end{abstract}

\section{Introduction}

Approximating conditional expectation functions numerically is one of the central difficulties when solving dynamic programming problems in financial and economic applications \cite{glasserman, powell, rust}, or in implementations
of probabilistic time-discretization schemes for parabolic partial differential equations \cite{BT, Zh, FTW, Ta}. For instance, when solving an optimal stopping problem numerically, there are trade-offs between stopping now for an immediate reward or waiting, receiving the continuation value -- which is a conditional expectation of future rewards.
In a highly influential paper, Longstaff and Schwartz \cite{LS} proposed to compute conditional expectations functions within Monte Carlo simulations in exactly the same way such functions are estimated from real world data. In 
their least-squares Monte Carlo (LSMC) approach, conditional expectations are approximated by regressing future realizations of some quantity of interest on basis functions (e.g. polynomials) that depend 
on current values of the state variables. This approach of mimicking the empiricist's regression method with real data replaced by simulated data has been the starting point of a vast and successful LSMC literature, see \cite{TVR, schwartz, AB, LGW, BGSS, MRHT, FTW, GHL, DSS, KLP, GLTV, NMS} for a broad selection of contributions from various fields ranging from economics and finance to numerical analysis. Yet early on, Glasserman and Yu \cite{GY} pointed out that, in principle, exploiting properties of the Monte Carlo setting that are not available to the empiricist can lead to even more powerful algorithms. Specifically, they proposed to consider LSMC algorithms where the basis functions depend on future values of the state variables. They called this type of algorithm `regression later' to contrast it against traditional LSMC algorithms which rely on a `regression now'.

When the underlying simulation model has the property that conditional expectations of the basis functions can be computed in closed form, `regression later' provides an alternative method for approximating conditional expectations. For instance, when the basis functions are polynomials, regression later requires that conditional moments of the simulated state process are available in closed form. To appreciate the promise of  `regression later', consider a setting where the regression model is correctly specified in the sense that 
the quantity within the time-$t$ conditional expectation is as a linear combination of the basis functions. In that case, under very mild conditions, one observation per basis function  will suffice to determine the time-$t$ conditional expectations function without error using `regression later'. In contrast, even when the model is correctly specified, `regression now' will suffer from the classical (and slow) square-root convergence of Monte Carlo methods.

Despite this tremendous promise, `regression later' has not replaced `regression now' as the default algorithm for numerical approximation of conditional expectations in its first 15 years. In our view, 
there are at least three reasons for this. (i) The method is harder to implement than `regression now' as it requires implementing not
only the basis functions but also their conditionals expectations. On top of this, in `regression later', basis choice is restricted to functions
with known closed-form conditional expectations while it is almost unrestricted in `regression now'. (ii) It took time
to become clear that the most 
successful applications of `regression later' seem to be outside the original optimal stopping setting of \cite{GY}, 
see e.g. \cite{BS,  Bal, PS, BGS2, BP}. 
Crucially, \cite{BS} pointed out that, under mild assumptions, Malliavin derivatives of the quantity of interest can
be computed without additional error 
in `regression later'. This observation is (largely) irrelevant for optimal stopping. Yet it implies that one of the greatest difficulties 
of `regression now', 
namely the variance explosion of the stochatic derivative weights,
 does not exist in the `regression later' setting (see \cite{AA} for discussion and control variates within `regression now'). (iii) Finally, while `regression later'
promises to converge faster than `regression now', there is, despite partial results in \cite{Bal}, a lack of theoretical support for this claim. The main obstacle in the theoretical analysis is that the usual derivations of convergence rates for regression problems rely on truncations to stabilize the approximation. Yet, basically, this truncation would have to be applied after the regression but before the closed-form calculation -- thus destroying the scope for making the calculation in closed-form.

To address these shortcomings and combine the advantages of `regression later' and `regression now', this paper introduces and rigorously analyzes `Regression Anytime with Brute-Force SVD Truncation' (RAWBFST, pronounced ``raw-beef-st''). Under sufficient smoothness, our algorithm can be calibrated to achieve any polynomial convergence rate for the mean squared error, thus holding some of the (bold) promise of convergence with a finite number samples in a fairly generic setting. These rates are achieved not only for the conditional expectations functions but also for their Malliavin derivatives that can be computed simultaneously.

The first ingredient of RAWBFST is what we call `regression anytime'. The idea is to let the basis functions depend on the state variables both `now' and `later'. In particular, we consider basis functions that are products of a function that depends on `now' and a function that depends on `later'. When taking the conditional expectation given `now' of such a function, the first factor depending only on `now' can be pulled out of the expectation. Thus, to compute the conditional expectation of the basis function, it suffices to consider the second factor. `Regression anytime' was previously applied within the stochastic grid bundling method of \cite{JO,CO} and in the LSMC algorithm of \cite{BGS}. While the idea of  `regression anytime' is simple, its additional flexibility in basis choice is key for developing practically effective implementations of `regression later', both in their settings and in ours. `Regression later' is a special case of `regression anytime' and, after a suitable redefinition of the state process, `regression anytime' can be rephrased as `regression later'. In this sense, the step from `regression later' to `regression anytime' is a change of perspective and a reevaluation of possibilities rather than the invention of a new algorithm.

The second ingredient of RAWBFST is a `Brute-Force SVD Truncation'.
Before the regression, we compute the singular values of the empirical regression matrix.
If all singular values are above a previously specified threshold, we perform the regression in the usual way, 
otherwise we set all coefficients to zero. In a more abstract framework, a similar idea appeared previously in \cite{CM} under 
the name `conditioned least-squares approximation'. A key ingredient of our error analysis for RAWBFST is a bound 
on the approximation error of noiseless regression with brute-force SVD truncation, Theorem \ref{thm:noiseless}. Here, the term `noiseless'
refers to settings in which the observations and the explanatory variables are driven by the same randomness so that the 
regression problem is ultimately an interpolation problem.  We show that the statistical error of this type of regression vanishes 
exponentially quickly in the number of Monte Carlo samples for a fixed set of basis functions. Hence,
up to log-factors, we can let the number of
basis functions and of Monte Carlo samples grow at the same rate.
This exponential decay is the reason 
why the RAWBFST algorithm can converge at any given polynomial rate, if the problem is sufficiently smooth, and 
thus may do better than the usual Monte Carlo convergence rate. 

Theorem \ref{thm:noiseless} can be viewed as a generalization of the results in \cite{CDL} 
beyond the case of orthonormal basis functions.   Exploiting the 
matrix version of the Bernstein inequality we explicitly derive, 
how the  exponential decay rate in noiseless least-squares regression depends on the eigenvalues of the expected regression matrix, i.e. the matrix containing 
the $\LL^2$-inner products of the basis functions with respect to the law of the state variable. 
 Thus, to fully exploit  Theorem \ref{thm:noiseless} some control of the eigenvalues  of the expected regression matrix is necessary. 
 The RAWBFST algorithm provides a guaranteed control of these eigenvalues in generic settings where the state process is discretized via (one step of) a Euler scheme. 
 The basis functions of RAWBFST need not be tailored to the coefficients of the Euler scheme, 
 but are generically chosen as Legendre polynomials at the later time point localized to a grid at the earlier time point.
   By changing the distribution of the earlier time point of the Euler step to a stratified uniform distribution via importance sampling, we can control how strongly the localized Legendre polynomials deviate from orthonormality.

The paper is organized as follows: In Section \ref{sec:alg}, we first introduce the setting and the RAWBFST algorithm. 
Theorem \ref{thm:main} then provides the error analysis of RAWBFST. Next, we discuss the resulting computational complexity and compare it to results for `regression now' and `regression later'. The theoretical convergence results are tested in three numerical examples in Section \ref{sec:numerics}.  In Section \ref{sec:interpolation}, we analyze the convergence behavior of noiseless regression with brute-force SVD truncation, thus providing a main building block for the error analysis of the RAWBFST algorithm. Finally, the technical details of this error analysis are provided in Section \ref{sec:proof}.

\subsection*{Notation}
For vectors $x=(x_1,\ldots,x_D)\in \R^D$, we write $|x|_p=(\sum_d |x_d|^p)^{1/p}$ for the $p$-norm $(p\geq 1)$ and  $|x|_\infty=\max_d |x_d|$ for the maximum norm. Given a symmetric  matrix $A\in \R^{D\times D}$, 
$\lambda_{\min}(A)$ and $\lambda_{\max}(A)$ denote the smallest and largest eigenvalue  of $A$ respectively. For general matrices $A\in \R^{D_1\times D_2}$, we apply the spectral norm $\|A\|_2=\sqrt{\lambda_{\max}(AA^\top)}$, 
where $(\cdot)^\top$  stands for matrix transposition. By $O(D)$ we denote the set of orthogonal matrices in $\R^{D\times D}$. We
write $\mathcal{C}^{Q}_b(\R^D)$ for the space of bounded real valued function on $\R^D$, which are $Q$-times continuously differentiable with bounded derivatives ($Q\in \N_0$), 
and $\|f\|_\infty=\sup_{x\in \R^D} |f(x)|_2$ for the sup-norm of a function $f:\R^D\rightarrow \R^M$. 
$\chi^2_D(\alpha)$ denotes the $(1-\alpha)$-quantile of the $\chi^2$-distribution with $D$ degrees of freedom while $\Phi$ and $\varphi$ stand for the distribution function and density of the standard normal distribution, respectively. 
Given a random vector $X$ in $\R^D$, we write $\supp X$ for the support of $X$, i.e. the set of $x\in \R^D$ such that $X$ hits every $\epsilon$-ball around $x$ with positive probability.
For a vector $x\in \R^D$ and a constant $r>0$ we denote by $[x]_r$ the componentwise truncation at level $\pm r$, i.e.
$$
[x]_r=([x_1]_r,\ldots [x_D]_r)^\top,\quad [x_d]_r= \max(\min(x_d,r),-r).
$$

\section{RAWBFST: Algorithm, convergence result, and discussion}\label{sec:alg}

\subsection{Setting of the problem}

Our main motivation is the problem of approximating conditional expectations of the form 
\begin{equation}\label{eq:cond_ex}
 E[\mathcal{H}_{\iota,\Delta}(\xi)  y(X_2)|X_1]
\end{equation}
via empirical least-squares regression, where $X_2$ is one step of an Euler scheme with step size $\Delta$ starting at $X_1$ and  $\mathcal{H}_{\iota,\Delta}(\xi)$
is a Malliavin Monte Carlo weight for the approximation of a (higher order) partial derivative of $y$. These type of conditional expectations appear in discretization schemes for backward stochastic differential equations (BSDEs), see e.g. \cite{Zh,BT}, and, more 
generally, in stochastic time discretization schemes of fully nonlinear parabolic partial differential equations (PDEs), see \cite{FTW, Ta}, including 
Hamilton-Jacobi-Bellman equations arising from stochastic control problems.

More precisely, let $X_1$ be an $\R^D$-valued random variable with law $\mu_1$, and denote by $\xi$ a $D$-dimensional vector of 
independent standard normal random variables, which is also assumed to be independent of $X_1$. For measurable coefficient functions $b:\R^D\rightarrow \R^D$ and $\sigma:\R^D\rightarrow \R^{D\times D}$, we consider 
\begin{equation}\label{eq:onestep}
X_2=X_1+b(X_1)\Delta+\sigma(X_1)\sqrt{\Delta}\xi.
\end{equation}
Conditions on the law of $X_1$ and on the coefficient functions will be specified in Assumption \ref{ass:1}. On the function $y:\R^D\rightarrow \R$ we assume that it is $Q+1$-times continuously differentiable and bounded with bounded derivatives, for some $Q\in \N$.
The boundedness assumptions can, of course, be relaxed, but we impose them for sake of simplicity. In order to specify the stochastic weights, we denote by 
$$
\mathcal{H}_q(x)=(-1)^qe^{x^2/2} \frac{\dd^q}{\dd x^q} e^{-x^2/2},\quad x\in \R,
$$
the Hermite polynomial with parameter 1 of degree $q\in \N_0$. For a multi-index $\iota\in \N_0^D$ , we denote its absolute value by $|\iota|_1=\sum_{d=1}^D \iota_d$. Then, the stochastic weight is defined as a scaled multivariate Hermite polynomial of degree $|\iota|_1$, namely,
$$
\mathcal{H}_{\iota,\Delta}(x)=\Delta^{-|\iota|_1/2} \prod_{d=1}^D  \mathcal{H}_{\iota_d}(x_d),\quad x=(x_1,\ldots,x_D).
$$
Moreover, we write $\bar \iota$ for the vector in $\{1,\ldots D\}^{|\iota|_1}$ which has, for each $d=1,\ldots,D$, the entry $d$ $\iota_d$-times, and whose entries are increasingly ordered.
Note that, under the assumptions stated above,  integration by parts yields, for $1\leq |\iota|_1\leq Q-1$,
\begin{eqnarray}\label{eq:relation_weight_derivative}
 && \nonumber E[\mathcal{H}_{\iota,\Delta}(\xi)  y(X_2)|X_1]\\ &=&\sum_{j_1,\ldots, j_{|\iota|_1}=1}^D \sigma_{j_1,\bar\iota_1}(X_1)\cdots \sigma_{j_{|\iota|_1},\bar\iota_{|\iota|_1}}(X_1) 
E\left[\left. \frac{\partial^{|\iota|_1}}{\partial (x_{j_1},\ldots x_{j_{|\iota|_1}})}y(X_2)\right|X_1\right].
\end{eqnarray}
Hence, the conditional expectation \eqref{eq:cond_ex} approximates the weighted sum of partial derivatives of $y$
$$
\sum_{j_1,\ldots, j_{|\iota|}=1}^D \sigma_{j_1,\bar\iota_1}(X_1)\cdots \sigma_{j_{|\iota|_1},\bar\iota_{|\iota|_1}}(X_1) 
 \frac{\partial^{|\iota|_1}}{\partial (x_{j_1},\ldots x_{j_{|\iota|_1}})}y(X_1),
$$
as $\Delta$ tends to zero. By a first-order Taylor expansion of
$$
\frac{\partial^{|\iota|_1}}{\partial (x_{j_1},\ldots x_{j_{|\iota|_1}})}y(X_2)
$$
around $X_1$, 
 this convergence will be of order $\Delta$ in  $\LL^2(\Omega,\F,P)$, if $b$ and $\sigma$ are bounded.

\subsection{The algorithm and its convergence behavior}

We  now introduce RAWBFST, our new algorithm for the approximation of conditional expectations in the presence of Malliavin Monte Carlo weights,
i.e. of the form \eqref{eq:cond_ex}. It is what we call a `regression anytime'-type algorithm, in the sense that we approximate 
$y(X_2)$ by basis functions which depend on $X_1$ and $X_2$. The basis functions are chosen in a way that the 
conditional expectations of the approximation multiplied by the Malliavin Monte Carlo weight are available in closed form,
leading to some type of automatic differentiation.
The algorithm  also relies on a change of measure of the 
law of $X_1$ and employs stratification, similarly to the  `regression now'-algorithm for backward 
stochastic differential equations in \cite{GLTV}. 
In order to stabilize the empirical regression, we truncate 
the singular value decomposition of the empirical regression matrix.

On the law of $X_1$ and the coefficients of the Euler scheme, we impose the following assumptions:
\begin{ass}\label{ass:1}
 The law $\mu_1$ of $X_1$ has a density $f$ with respect to the Lebesgue measure such that the `Aronson type' estimate
$$
f(x)\leq  \frac{C_{1,f}}{(2\pi C_{2,f})^{D/2}} \exp \left\{\frac{-|x|_2^2}{2 C_{2,f}} \right\} ,\quad x\in \R^D,
$$ 
is satisfied for constants $C_{1,f}, C_{2,f}>0$. Moreover, $b$ and $\sigma$ are bounded, i.e. there is a  constant $C_{b,\sigma}>0$  such that 
$$
\sup_{u\in \R^D} \left( |b(u)|_2+ \|\sigma(u)\|_2\right)\leq C_{b,\sigma}.  
$$
\end{ass}
\begin{rem}
 The situation which we have in mind is the following one:  $X_1=X^{e}_{t_{i_0}}$ for some $i_0$, where 
$$
X^{e}_{t_{i+1}}=X^{e}_{t_i}+\bar b(t_i, X_{t_i}^e)\Delta+  \bar \sigma(t_i, X_{t_i}^e)\sqrt{\Delta}\xi_i,\quad X^{e}_{0}=X_0,
$$
is an Euler scheme approximation to the stochastic differential equation 
$$
dX(t)=\bar b(t,X(t))dt+\bar \sigma(t,X(t))dW(t), \quad X(0)=X_0, \quad t\in [0,T],
$$
driven by a Brownian motion $W$. Here, of course, $t_i=i\Delta$ and $(\xi_i)$ is an i.i.d. family of $D$-dimensional vectors of independent standard normal variables. Suppose $\bar b:[0,T]\times \R^D\rightarrow \R^D$ 
is measurable and bounded,  $\bar\sigma:[0,T]\times \R^{D}\rightarrow \R^{D\times D}$ is measurable, H\"older continuous in space (uniformly in time), and $\bar\sigma \bar\sigma^\top$ is uniformly elliptic. 
Moreover, assume that $X_0$ is  independent of $(\xi_i)$ and Gaussian with mean vector $x_0$ and covariance matrix $\Sigma_0\geq c_0 \mathbb{I}_D$ (where $\mathbb{I}_D$ is the identity matrix and $c_0\geq 0$, i.e. $\Sigma_0$ may be degenerate). Then, by an application of Theorem 2.1 in \cite{LM}, the Aronson estimate in Assumption \ref{ass:1} holds with 
$C_{2,f}=C_{2,f}' t_{i_0}+c_0$ for some constant $C_{2,f}'>0$.
\end{rem}

Before we precisely state the algorithm, let us first explain its several steps in a more intuitive way: In the course of Steps 1--4 of the algorithm,
we construct an approximation $\hat y(X_1,X_2;\Theta)$ of the function
 $y(X_2)$.   The function $\hat y(x_1,x_2;\Theta)$ is a linear combination of polynomials in $x_2$, which are localized in the $x_1$-variables (i.e.  `one time step earlier'). The approximation depends on a randomly generated sample $\Theta=(U_{\ii,l},\xi_{\ii,l})$. The first two steps of the algorithm are preparations. In Step 1, a cubic partition $(\Gamma_\ii)$ of some 
subset $\Gamma\subset \R^D$ for the localization in the $x_1$-variables is constructed. In Step 2, 
we provide a suitable basis of the space of polynomials of degree at most $Q$ in terms of Legendre polynomials. With these polynomials, we define our basis functions of the type `polynomials localized one time step earlier'. 
In Steps 3--4, an empirical regression with SVD truncation is performed to compute the coefficients for $\hat y(x_1,x_2;\Theta)$. Here,  we first change 
measure 
from the true distribution of $X_1$ to the uniform distribution on $\Gamma$ and then stratify the uniform distribution on $\Gamma$ on the cubic partition $(\Gamma_\ii)$. This change to a uniform distribution is in line with our choice of Legendre polynomials for the basis functions as these are the orthogonal polynomials for the uniform distribution. The sampling of $X_2$ in Step 3 involves an additional truncation 
of the Gaussian innovations at some level $r_2$. 
In the final Step 5, the algorithm returns our estimator $\hat{z}(x)$  for $E[\mathcal{H}_{\iota,\Delta}(\xi)  y(X_2)|X_1=x]$. 
 This estimator is simply the closed-form expression for the conditional expectation 
$$
E[\mathcal{H}_{\iota,\Delta}([\xi]_{r_2})  \hat y(X_1,X_2^{(\Delta,r_2)};\Theta)|\Theta, X_1=x],
$$
where $(X_1,\xi)$ is independent of the sample $\Theta$ and $X^{(\Delta,r_2)}_2$ is the one-step Euler scheme \eqref{eq:onestep} starting from $X_1$ with step size $\Delta$ and
with the truncated Gaussian innovation $[\xi]_{r_2}$ in place of $\xi$.

\begin{alg}\label{alg:1}
 {\it Input:} function $y:\R^D\rightarrow \R$, constants $L\in \N$, $Q\in \N$, $\Delta>0$, $\iota\in \N_0^D$, $\tau\in (0,1)$, $\gamma_{\textnormal{cube}}\in (0,1/2)$, $c_{\textnormal{cube}}, c_{1,\textnormal{trunc}}, c_{2,\textnormal{trunc}}, 
\gamma_{1,\textnormal{trunc}}, \gamma_{2,\textnormal{trunc}}>0$.
\begin{itemize}
 \item Step 1: Construction of the cubic partition for $X_1$ (`now'). 

Let $h=c_{\textnormal{cube}}\Delta^{\gamma_{\textnormal{cube}}}$ and $r_1=\sqrt{C_{2,f} \chi^2_D(c_{1,\textnormal{trunc}}\,\Delta^{ \gamma_{1,\textnormal{trunc}}})}$. For every multi-index $\ii=(i_1,\ldots, i_D)\in \Z^D$, consider the cube 
$$
\Gamma_\ii=\prod_{d=1}^D (hi_d,h(i_d+1)] 
$$
and let 
$$
I=I_{\Delta}=\left\{\ii \in \Z^D;\; \Gamma_\ii\cap \{x\in \R^D;\ |x|_2\leq r_1\} \neq \emptyset\right\}.
$$
Write $\Gamma=\cup_{\ii\in I} \Gamma_\ii$ and $a_\ii$ for the center of the $\ii$th cube. 
\item Step 2:   Construction of the local polynomials for $X_2$ (`later').
Denote by $\mathcal{L}_q:\R\rightarrow \R$ the Legendre polynomial of degree $q$, which is normalized such that $\mathcal{L}_q(1)=1$, i.e. 
\begin{equation*}
 \mathcal{L}_q(x)=\frac{1}{2^q}\sum_{r=0}^{\lfloor q/2\rfloor} \frac{(-1)^r (2q-2r)!}{r!(q-r)!(q-2r)!}x^{q-2r}.
\end{equation*}
For any multi-index $\jj\in \N_0^D$ such that $|\jj|_1\leq Q$,  let 
$$
p_{\jj}(x)=  \prod_{d=1}^D \sqrt{2j_d+1} \, \mathcal{L}_{j_d}\left(x_d\right),\quad x=(x_1,\ldots, x_D).
$$
Let $K= {{D+Q}\choose{D}}$. For every $\ii\in I$ denote by $\eta_{\ii,k}$, $k=1,\ldots,K$, any fixed ordering of the polynomials
$$
x\mapsto  p_{\jj}\left(\frac{x-a_\ii}{h/2}\right),\quad { \jj}\in \N_0^D,\, |\jj|_1\leq Q.
$$ 

\item Step 3: Construction of the empirical regression matrices on the cubes.

For every $\ii\in I$, sample independent copies $(U_{\ii,l},\xi_{\ii,l})_{l=1,\ldots,L}$ where $U_{\ii,l}$ is uniformly distributed on $\Gamma_\ii$ and   $\xi_{\ii,l}$ is multivariate Gaussian with zero mean vector and unit covariance matrix independent of $U_{i,l}$. Let 
$r_2=\sqrt{2\log(c_{2,\textnormal{trunc}}\,\Delta^{- \gamma_{2,\textnormal{trunc}}}\log(\Delta^{-1}))}$ and
$$
X_{\ii,l}=U_{\ii,l}+b(U_{\ii,l})\Delta+\sigma(U_{\ii,l})\sqrt{\Delta} \,[\xi_{\ii,l}]_{r_2}.
$$
Build the empirical regression matrices
$$
A_\ii=(\eta_{\ii,k}(X_{\ii,l}))_{l=1,\ldots,L;\,k=1,\ldots K}
$$
\item Step 4: Least-squares interpolation with brute-force SVD truncation.

For every $\ii\in I$ perform a singular value decomposition of $A_\ii^\top$:
$$
A_\ii^\top=\mathcal{U} \mathcal{D}\mathcal{V},\quad \mathcal{U}\in O(K),\; \mathcal{V}\in O(L),
$$
where $\mathcal{D}$ is the $K\times L$-matrix which has the singular values $s_1\geq s_2\geq \cdots\geq s_K\geq 0$ of $A_\ii$ on the diagonal and has zero entries otherwise.
If $s_K^2\geq \tau L$, let
$$
\alpha_{L,\ii}=\mathcal{U}\mathcal{D}^\dagger \mathcal{V} \;(y(X_{\ii,1}),\ldots, y(X_{\ii,L}))^\top
$$
where $\mathcal{D}^\dagger$  is the $K\times L$-matrix which has $s_1^{-1},\ldots,s_K^{-1}$  on the diagonal and has zero entries otherwise (i.e., the pseudoinverse of $\mathcal{D}^\top$).
Otherwise let
$$
\alpha_{L,\ii}=0\in \R^K.
$$
\item Step 5: Return 
$$
\hat{z}(x):=\sum_{\ii\in I} {\bf 1}_{\Gamma_\ii}(x) \sum_{k=1}^K \alpha_{L,\ii,k} E[\eta_{\ii,k}(x+b(x)\Delta+\sigma(x)\sqrt{\Delta} [\xi]_{r_2}) \mathcal{H}_{\iota,\Delta}([\xi]_{r_2})]
$$
as an approximation of $E[\mathcal{H}_{\iota,\Delta}(\xi)  y(X_2)|X_1=x]$.
\end{itemize}
\end{alg}

\begin{rem}
 Note that 
$$
\eta_{\ii,k}(x+b(x)\Delta+\sigma(x)\sqrt{\Delta} [\xi]_{r_2}) \mathcal{H}_{\iota,\Delta}([\xi]_{r_2})
$$
is a polynomial in $[\xi_1]_{r_2},\ldots [\xi_D]_{r_2}$, whose coefficients depend on $x$. By independence, 
$$
E\left[\prod_{d=1}^D [\xi_d]^{q_d}_{r_2} \right]=\prod_{d=1}^D  E\left[[\xi_1]^{q_d}_{r_2} \right].
$$
Hence, the expectation in Step 5 can be computed in closed form by the following recursion formula for the moments $m_{q,r}=E[[\xi_1]^{q}_{r}]$  of the truncated standard normal distribution:
\begin{eqnarray*}
 m_{2q,r}&=&(2q-1) m_{2q-2,r} + 2 r^{2q-2}(r^2-2q+1) (1-\Phi(r))- 2 r^{2q-1} \varphi(r),\quad   m_{0,r}=0,\\
m_{2q-1,r}&=&0.
\end{eqnarray*}
\end{rem}

The following theorem provides the error analysis for Algorithm \ref{alg:1}. Its proof is postponed to Section \ref{sec:proof}.

\begin{thm}\label{thm:main}
 Fix $\rho\in \N$. Suppose $y\in \mathcal{C}^{Q+1}_b(\R^D)$ for some $Q\geq |\iota|_1+\rho$. Compute $\hat{z}$ via Algorithm \ref{alg:1} with 
 $$
\gamma_{\textnormal{cube}}=\frac{\rho+|\iota|_1}{2(Q+1)},\quad  \gamma_{1,\textnormal{trunc}}=\rho, \quad\gamma_{2,\textnormal{trunc}}=1.5(|\iota|_1+\rho),$$
and
\begin{eqnarray*}
L&=&L_\Delta=\lceil \rho  \,c_{1,\textnormal{paths}} \log(c_{2,\textnormal{paths}}\; \Delta^{-1})\rceil \\
\tau&\in& \left(0, \quad 1-\left(\frac{c^*_{\textnormal{paths}}(Q,D)}{c_{1,\textnormal{paths}}}\right)^{1/2} \right) 
\end{eqnarray*}
for constants
 $$
c_{2,\textnormal{paths}}>0,\quad c_{1,\textnormal{paths}}> c^*_{\textnormal{paths}}(Q,D):=\frac{2}{3}+\frac{8}{3}\sum_{{\bf j}\in \N_0^D; |{\bf j}|_1\leq Q}\; \prod_{d=1}^D (2j_d+1).
$$
Then there are constants $C>0$ and $\Delta_0>0$ (depending on all the constants, including $D$, $Q$, $|\iota|_1$, $\tau$, and the $\mathcal{C}^{Q+1}_b$-norm of $y$)  such that for every $\Delta\leq \Delta_0$
\begin{eqnarray*}
E\left[\int_{\R^D}  |E[\mathcal{H}_{\iota,\Delta}(\xi)  y(X_2)|X_1=x]-\hat{z}(x)|^2\mu_1(dx) \right]\leq C \log(\Delta^{-1})^{D/2} \Delta^\rho.
\end{eqnarray*}
\end{thm}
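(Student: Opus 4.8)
The plan is to bound the integrated mean squared error by splitting the deviation $z(x)-\hat z(x)$, where $z(x):=E[\mathcal{H}_{\iota,\Delta}(\xi)y(X_2)\mid X_1=x]$, into four contributions and matching each against the prescribed powers of $\Delta$. First I would split $\int_{\R^D}|z(x)-\hat z(x)|^2\mu_1(dx)$ according to whether $x\in\Gamma$ or $x\notin\Gamma$. On the complement of $\Gamma$ the estimator vanishes, so the error is $|z(x)|^2$; since \eqref{eq:relation_weight_derivative} writes $z$ as a bounded combination of partial derivatives of the bounded function $y$, $z$ is itself bounded, and this contribution is at most $\|z\|_\infty^2\,\mu_1(\R^D\setminus\Gamma)$. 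The Aronson bound of Assumption \ref{ass:1} dominates $\mu_1$ by a Gaussian of variance $C_{2,f}$, so that $\mu_1(\{|x|_2>r_1\})\lesssim C_{1,f}c_{1,\textnormal{trunc}}\Delta^{\gamma_{1,\textnormal{trunc}}}=C_{1,f}c_{1,\textnormal{trunc}}\Delta^{\rho}$ by the definition of $r_1$ through the $\chi^2_D$-quantile; this is already of the claimed order.

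On $\Gamma$, I would insert the truncated-innovation target $z^{r_2}(x):=E[\mathcal{H}_{\iota,\Delta}([\xi]_{r_2})y(X_2^{(\Delta,r_2)})\mid X_1=x]$ and use $|z-\hat z|^2\le 2|z-z^{r_2}|^2+2|z^{r_2}-\hat z|^2$. The innovation-truncation error $z-z^{r_2}$ is supported on $\{|\xi|_\infty>r_2\}$, where $[\xi]_{r_2}\neq\xi$; bounding $y$ by $\|y\|_\infty$ and using Cauchy--Schwarz to separate the bounded $\LL^2$-norm of the Hermite factor from $P(|\xi|_\infty>r_2)^{1/2}\lesssim (D\,e^{-r_2^2/2})^{1/2}$, together with the scaling $\Delta^{-|\iota|_1/2}$, yields an error of order $\Delta^{-|\iota|_1/2}\Delta^{\gamma_{2,\textnormal{trunc}}/2}=\Delta^{(|\iota|_1+3\rho)/4}$ under the choice $\gamma_{2,\textnormal{trunc}}=1.5(|\iota|_1+\rho)$. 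Its square is $O(\Delta^{(|\iota|_1+3\rho)/2})=o(\Delta^\rho)$, so this term is harmless (the logarithmic factor hidden in $r_2$ even works in our favour).

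The heart of the argument is the remaining term, which for $x\in\Gamma_\ii$ equals $E[\mathcal{H}_{\iota,\Delta}([\xi]_{r_2})\,\{y(X_2^{(\Delta,r_2)})-\sum_k\alpha_{L,\ii,k}\eta_{\ii,k}(X_2^{(\Delta,r_2)})\}\mid\Theta,X_1=x]$, since on $\Gamma_\ii$ the map $\hat y(x,\cdot;\Theta)$ is the SVD-truncated regression polynomial $\sum_k\alpha_{L,\ii,k}\eta_{\ii,k}$. Conditionally on $\Theta$ I would apply Cauchy--Schwarz to split off the weight, so that $|z^{r_2}(x)-\hat z(x)|^2\le E[\mathcal{H}_{\iota,\Delta}([\xi]_{r_2})^2\mid X_1=x]\cdot E[|y(X_2^{(\Delta,r_2)})-\hat y(x,X_2^{(\Delta,r_2)};\Theta)|^2\mid\Theta,X_1=x]$, the first factor being of order $\Delta^{-|\iota|_1}$. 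For the second factor I would write $y-\hat y=(y-\Pi_\ii y)+(\Pi_\ii y-\hat y)$, with $\Pi_\ii y$ the orthogonal projection of $y$ onto $\Span\{\eta_{\ii,k}\}$ in $\LL^2$ of the sampling law. The projection part is controlled by a Taylor expansion of $y$ on an enlarged cube $\Gamma_\ii$ (legitimate because the Euler displacement $\sigma\sqrt{\Delta}[\xi]_{r_2}$ has size $O(\sqrt{\Delta}\,r_2)\ll h$, as $\gamma_{\textnormal{cube}}<1/2$ whenever $Q\ge|\iota|_1+\rho$), giving $\|y-\Pi_\ii y\|\lesssim h^{Q+1}=\Delta^{(Q+1)\gamma_{\textnormal{cube}}}$. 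Multiplying the squared projection error $h^{2(Q+1)}=\Delta^{\rho+|\iota|_1}$ by the weight factor $\Delta^{-|\iota|_1}$ produces exactly $\Delta^\rho$; this is precisely the role of the balancing choice $\gamma_{\textnormal{cube}}=(\rho+|\iota|_1)/(2(Q+1))$, and after integrating against $\mu_1$ (whose mass on $\Gamma$ is at most one) the projection contribution is $O(\Delta^\rho)$.

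The statistical part $\Pi_\ii y-\hat y$ is where I expect the main obstacle. It is the error of \emph{noiseless} regression with brute-force SVD truncation, to which Theorem \ref{thm:noiseless} applies: its $\LL^2$-size is governed by the singular values of the expected regression matrix and decays exponentially in $L$. Two ingredients must be verified to turn this into a $\Delta^\rho$-bound after summing over the $|I|\lesssim\log(\Delta^{-1})^{D/2}\Delta^{-D\gamma_{\textnormal{cube}}}$ cubes and multiplying by the amplification $\Delta^{-|\iota|_1}$. First, a lower bound on the smallest singular value of the expected regression matrix that is uniform over all $\ii\in I$ and all small $\Delta$: here the construction pays off, since the $\eta_{\ii,k}$ are rescaled Legendre polynomials, orthonormal for the uniform law on $\Gamma_\ii$, and the sampling law (uniform start followed by one truncated Euler step of size $O(\sqrt{\Delta})$) is a vanishing perturbation of that uniform law, so the expected Gram matrix stays close to the identity. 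Second, a matrix Bernstein inequality converts this gap into an exponentially small probability $\sim e^{-cL}$ of the event $s_K^2<\tau L$ on which $\alpha_{L,\ii}=0$, with the relevant constant governed by $c^*_{\textnormal{paths}}(Q,D)=\tfrac{2}{3}+\tfrac{8}{3}\sum_{|\jj|_1\le Q}\prod_d(2j_d+1)$, which is exactly $\tfrac{2}{3}+\tfrac{8}{3}\sum_k\|\eta_{\ii,k}\|_\infty^2$ on the cube rescaled to $[-1,1]^D$. Choosing $L=\lceil\rho\,c_{1,\textnormal{paths}}\log(c_{2,\textnormal{paths}}\Delta^{-1})\rceil$ with $c_{1,\textnormal{paths}}>c^*_{\textnormal{paths}}(Q,D)$ and $\tau$ in the stated interval pushes the effective exponent in $e^{-cL}\lesssim\Delta^{m}$ above $\rho+|\iota|_1+D\gamma_{\textnormal{cube}}$, so that $|I|\,\Delta^{-|\iota|_1}e^{-cL}\lesssim\log(\Delta^{-1})^{D/2}\Delta^\rho$, with the surviving $\log(\Delta^{-1})^{D/2}$ stemming from $r_1^D\sim\log(\Delta^{-1})^{D/2}$ in the cube count. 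A final technical point is the comparison of norms: the regression error is measured in $\LL^2$ of the sampling law, whereas the target factor uses the conditional law of $X_2^{(\Delta,r_2)}$ at a fixed $x\in\Gamma_\ii$; since both are supported near $\Gamma_\ii$ and $\Span\{\eta_{\ii,k}\}$ is finite-dimensional, equivalence of norms on this polynomial space, with constants uniform in $\ii$ and $\Delta$ after rescaling to $[-1,1]^D$, bridges the two. Collecting the four contributions then yields the stated bound $C\log(\Delta^{-1})^{D/2}\Delta^\rho$.
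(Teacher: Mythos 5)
Your overall architecture tracks the paper's quite closely: truncation in space via $r_1$, truncation of the innovations via $r_2$, Cauchy--Schwarz to split off the Hermite weight at cost $\Delta^{-|\iota|_1}$, a Taylor/projection error $h^{2(Q+1)}=\Delta^{\rho+|\iota|_1}$ balanced by the choice of $\gamma_{\textnormal{cube}}$, Legendre orthonormality plus a perturbation argument for the Gram eigenvalues, and a matrix Bernstein inequality for the probability of brute-force truncation. Two of your ingredients are legitimate variants: your tail-probability treatment of the innovation truncation is simpler than the paper's gradient estimate in Lemma \ref{lem:trunc1} (and gives an even better exponent), and your norm-equivalence bridge between the sampling law and the conditional law at fixed $x$ can be made rigorous for the polynomial part, whereas the paper instead uses a direct density-ratio argument (Lemma \ref{lem:trunc2}).

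There is, however, a genuine gap in your accounting of the SVD-truncation event. You apply the weight Cauchy--Schwarz (paying $\Delta^{-|\iota|_1}$) to the full difference $y-\hat y$, including on the event $\{s_{\ii,K}^2<\tau L\}$ where $\hat y=0$ and hence $|\Pi_\ii y-\hat y|\approx \|y\|$, and you additionally multiply by the raw cube count $|I|\sim\log(\Delta^{-1})^{D/2}\Delta^{-D\gamma_{\textnormal{cube}}}$; this forces the requirement $P(s_{\ii,K}^2<\tau L)\lesssim\Delta^{\rho+|\iota|_1+D\gamma_{\textnormal{cube}}}$. The stated parameters cannot deliver this. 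With $L=\lceil\rho\,c_{1,\textnormal{paths}}\log(c_{2,\textnormal{paths}}\Delta^{-1})\rceil$, the Bernstein exponent is asymptotically $\rho\,c_{1,\textnormal{paths}}\,\frac{3(1-\tau)^2}{6m_\infty+2(1-\tau)(m_\infty+1)}\,\log(c_{2,\textnormal{paths}}\Delta^{-1})$, where $m_\infty=\sum_{|\jj|_1\leq Q}\prod_d(2j_d+1)$; since $3c^*_{\textnormal{paths}}(Q,D)=2+8m_\infty$, the constraints $c_{1,\textnormal{paths}}>c^*_{\textnormal{paths}}(Q,D)$ and $(1-\tau)^2>c^*_{\textnormal{paths}}(Q,D)/c_{1,\textnormal{paths}}$ guarantee only that the coefficient of $\log(c_{2,\textnormal{paths}}\Delta^{-1})$ exceeds $\rho$, and it approaches $\rho$ as $c_{1,\textnormal{paths}}\downarrow c^*_{\textnormal{paths}}(Q,D)$ and $\tau\downarrow 0$. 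So the guaranteed decay of the truncation probability is only $\Delta^\rho$ (up to constants), short of your requirement by the factor $\Delta^{|\iota|_1+D\gamma_{\textnormal{cube}}}$, and your argument does not close for all admissible parameters. The paper's proof avoids this exactly through the sets $\tilde\Gamma_\ii$ in Lemma \ref{lem:trunc2}: on the truncation event the estimator vanishes identically, so the error there equals $|E[\mathcal{H}_{\iota,\Delta}([\xi]_{r_2})y(X_2^{(\Delta,r_2)})|X_1=x]|^2$, which is bounded uniformly in $\Delta$ by the integration-by-parts identity \eqref{eq:relation_weight_derivative} --- no $\Delta^{-|\iota|_1}$ is incurred --- and the cubes enter weighted by $P(X_1\in\Gamma_\ii)$, which sums to at most one, so only $\max_\ii P(\tilde\Gamma_\ii^c)\lesssim\Delta^\rho$ is needed, precisely what Lemma \ref{lem:statistical} with $\gamma_{\textnormal{paths}}=\rho$ supplies. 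The weight Cauchy--Schwarz must be confined to the complement event, where the conditional regression error is controlled by the projection error alone via \eqref{eq:hilf013}. Relatedly, the $\log(\Delta^{-1})^{D/2}$ in the final bound does not come from the cube count, as you claim, but from the change of measure on the non-truncation event: the ratio between the $\mu_1$-density (bounded above by the Aronson constant) and the uniform sampling density costs $\lambda^{\otimes D}(\Gamma)\lesssim r_1^D\sim\log(\Delta^{-1})^{D/2}$.
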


\begin{rem}
 In view of Theorem \ref{thm:noiseless} below, the choice parameter $\tau$ for the level of the SVD truncation provides a trade-off between 
 the contribution of the projection error and of the statistical error to the overall error analysis. 
 A larger $\tau$ scales down the constant in front of the projection error. However, a large $\tau$ can only be achieved
 by increasing $c_{1,\textnormal{paths}}$ and, thus by increasing the number of simulations which are required to control the statistical 
 error. In our numerical test cases, we choose $\tau$ of the order $0.02$ in order to keep the number of simulations small without 
 observing any negative effect concerning the behavior of the projection error.
\end{rem}

\subsection{Discussion}

In this section, we discuss the scope and limitations of our algorithm and compare it to 
`regression now' and `regression later'-algorithms for the same problem.

\subsubsection{Scope and limitations}

 As our RAWBFST algorithm  applies local basis functions, it can be 
realistically implemented for the case of low and moderate dimensions $D$ only, say up to around $D=10$.
\\[0.2cm]
{\it Complexity:} We assume the setting of Theorem \ref{thm:main}. The main steps of the algorithm are implemented as a loop over the number of cubes. The truncation level $r_1$ 
for the space partition, which is defined in terms of the quantile function of a $\chi^2$-distribution, grows as $\sqrt{\log(\Delta^{-1})}$,
see the proof of Lemma \ref{lem:trunc2} below. Hence, the number of cubes behaves as 
$$
|I_{\Delta}|\sim \Delta^{-D\gamma_{\textnormal{cube}}} (\log(\Delta^{-1}))^{D/2}.
$$
For each cube within this loop, $L_{\Delta}$ random samples are generated (Step 3) and a (thin) singular value decomposition of a $L_{\Delta} \times 
{D+Q \choose D}$-matrix is performed, where  ${D+Q \choose D}$ is the number of basis functions per cube (Step 4). The costs for these 
two steps grow linearly in the number of samples $L_{\Delta}$ and, thus, logarithmically in $\Delta^{-1}$.  Hence, the overall cost 
for computing the full coefficient vector $(\alpha_{L,\ii})$, and thus to construct the approximating function $\hat z$, is of the order
$$
\Delta^{-D\gamma_{\textnormal{cube}}} (\log(\Delta^{-1}))^{1+D/2}
$$
and, in view of Theorem \ref{thm:main}, yields an accuracy (root mean-squared error)  of the order
$$
\log(\Delta^{-1})^{D/4} \Delta^{\rho/2}.
$$
Ignoring the log-factors the complexity for obtaining an accuracy of the order $\epsilon$
is 
\begin{equation}\label{Cx}
C(\epsilon)=\epsilon^{-\frac{2D}{\rho} \gamma_{\textnormal{cube}}} =  \epsilon^{-\frac{D}{Q+1}\left(1+\frac{|\iota|_1}{\rho}\right)} 
\end{equation}
In particular, RAWBFST can beat the plain Monte Carlo complexity $\epsilon^{-2}$ for approximating a single expectation, if 
the dimension-to-smoothness ratio $D/(Q+1)$ is sufficiently small.
\\[0.2cm]
{\it Evaluation costs:}  In the context of dynamic programming equations (see e.g. the numerical examples below), the approximation
$\hat{z}(x)$ is typically evaluated at a new random sample of size $|I_{\Delta}|L_{\Delta}$. This corresponds to the total number 
of samples (i.e. number of cubes times number of samples per cube). As $\hat z$ is a local function, the dominating cost for evaluating 
$\hat z$ at a single point in Step 5 of the algorithm is to determine the cube in which the point lies. This cost grows logarithmically
in the number of cubes. Hence, ignoring again the log-factors, the evaluation costs for a sample of size $|I_{\Delta}|L_{\Delta}$ is of the order
$$
\Delta^{-D\gamma_{\textnormal{cube}}}= \epsilon^{-\frac{D(\rho+|\iota|_1)}{2(Q+1)}}
$$
and thus matches the cost for computing the full coefficient vector $(\alpha_{L,\ii})$.
\\[0.2cm]
{\it Practical applicability:}
In order to get some insight on the practical applicability of the RAWBFST algorithm, we now fix $\rho=2$, which matches the rate 
of the `discretization error' of the Malliavin Monte-Carlo approximation to the partial derivatives of $y$, cp. \eqref{eq:relation_weight_derivative}. Table \ref{tab:raw} provides,
for various choices of the dimension $D$ and the maximal degree of the local polynomials $Q$, the following numbers:
\begin{itemize}
 \item   $\lceil 2  c^*_{\textnormal{paths}}(Q,D)\rceil$, which up to the log-factor corresponds to the number of samples generated per cube;
 \item $D+Q\choose D$, the number of local basis functions per cube;
 \item rate$_{\partial}$, the convergence rate for the approximation of a first order partial derivative as a function of complexity;
 \item rate$_{\partial^2}$, the convergence rate for the approximation of a second order partial derivative (for $Q \geq 4$) as a function of complexity.
\end{itemize}
A cross indicates that we consider the computational cost prohibitive, unless a computer architecture with a massive potential for 
parallelization is at hand.
\begin{table}
\centering
\begin{tabular}{|c||c|c|c|c|c|c|}
\hline
\vspace{-8pt} &&&&&&\\
$Q \backslash D $ & 1& 3 & 5& 7 & 10 & 15\\
\hline  \hline & 87  & 1015 & 3927 & 9975 & 27447 & 88567 \\ 3  & 4  & 20  & 56 & 120 & 286 & 816 \\  &  {\bf 2.67} & {\bf 0.89} & {\bf 0.53} & {\bf 0.38}  & {\bf 0.27} & \bf{0.18} \\  & --  & -- & -- & -- & -- & -- \\
\hline  & 135 &  2951 &  17100 & 58135 & & \\ 4  & 5  & 35  & 126 & 330 & X & X \\  & {\bf 3.33}  & {\bf 1.11}  & {\bf 0.67} & {\bf 0.48} &  & \\  & {\it 2.5}  & {\it 0.83} & {\it 0.5} & {\it 0.36} & & \\
\hline  &  194 & 7319 & 61783 & & & $\lceil 2  c^*_{\textnormal{paths}}(Q,D)\rceil$ \\ 5  & 6 & 56 & 126 &X & X & $D+Q\choose D$ \\  & {\bf 4} & {\bf 1.33} & {\bf 0.8} & & & rate$_\partial$ \\  & {\it 3} & {\it 1} & {\it 0.6}  & & &  rate$_{\partial^2}$  \\
\hline
\end{tabular}
\caption{Summary statistics related to the computational cost and the convergence rates  of the RAWBFST approximation of first order (boldface) and second order (italic) partial derivatives derived from \eqref{Cx} 
for various choices of the dimension $D$ 
and the maximal polynomial degree $Q$.}\label{tab:raw}
\end{table}

\subsubsection{Comparison to `regression now'} \label{sec:RNRLRA}

In the `regression now' approach, one chooses a set of basis functions 
 $$\eta(x)=(\eta_1(x),\ldots,\eta_K(x)),$$ which we think of as a row vector. One then generates independent samples $(X_{1,l},\xi_{l})$, ${l=1,\ldots,L}$, 
 and solves the linear least-squares problem
 \begin{equation}\label{reg:now}
 \hat \alpha^{L,\,\textnormal{now}}=\arginf_{\alpha \in \R^K} \frac{1}{L} \sum_{l=1}^L \left| \mathcal{H}_{\iota,\Delta}(\xi_l)y(X_{2,l})- \eta(X_{1,l})\alpha\right|^2 ,\quad 
 X_{2,l}:=X_{1,l}+b(X_{1,l})\Delta+\sigma(X_{1,l})\sqrt{\Delta}\xi_l.
 \end{equation}
(In the case of multiple minimizers, one can choose e.g. the one with the minimal Euclidean norm). One then approximates the
regression function $z(x)=E[\mathcal{H}_{\iota,\Delta}(\xi)  y(X_2)|X_1=x]$ by
$$
\hat z^{L,\,\textnormal{now}}(x)=[\eta(x)\hat \alpha^{L,\,\textnormal{now}}]_{B},
$$
where truncation takes place at a level $B$, which is any upper bound for the supremum norm of the regression function $z$.
The resulting estimate is, thus, a linear combination of the basis functions, truncated at level $B$. This truncation is the standard way to come up with a `stable' estimate in situations where 
(say, due to an unfavorable realization of the sample) the least-squares regression problem in \eqref{reg:now} is ill-conditioned.  
If one thinks of $X_1$ and $X_2$ as  modeling a system at two time points 1 (`now') and 2 (`later'), the phrase `regression now' 
simply emphasizes that the basis 
functions only depend on $X_1$.

According to Theorem 11.3 in \cite{Gal}, the $\LL^2$-error for this `regression now'-estimate decomposes into the sum of a `projection error' and a `statistical error', which are of the form
\begin{eqnarray}
\textnormal{projection error}&=& \sqrt{\inf_{\alpha \in \R^K} E[|z(X_1)-\eta(X_1)\alpha|^2 ]} \label{eq:proj} \\
\textnormal{statistical error}&=& \sqrt{(\sup_{x\in \R^D} \Var(Z|X_1=x)+B^2)\, \frac{(\log(L)+1)K}{L}},
\end{eqnarray}
where
$$
Z=\mathcal{H}_{\iota,\Delta}(\xi)  y(X_2).
$$
is the regressand. 

As basis functions we apply local polynomials of the form 
\begin{equation}\label{eq:locpoly}
{\bf 1}_{\Gamma_{\bf i}}(X_2) X_2^{\bf j},\quad {\bf j}=(j_1,\ldots,j_D),\; |{\bf j}|_1\leq Q.
\end{equation}
Assuming that $y\in \mathcal{C}^{Q+|\iota|_1+1}$ (and, thus, admitting some extra smoothness compared to Theorem \ref{thm:main}),
a Taylor expansion up to degree $Q$ of the partial derivatives of $y$ of order $|\iota|_1$ shows that 
the number of cubes must increase (up to a log-factor) as 
$
\Delta^{-\frac{D}{Q+1}}
$
to achieve a projection error of the order $\Delta$.

We now turn to the statistical error. Note that the variance of the weights $\mathcal{H}_{\iota,\Delta}(\xi)$ explodes as $\Delta^{-|\iota|_1}$ for $\Delta 
\rightarrow 0$. For $|\iota|_1=1$, this variance blow-up can be prevented in the current setting by replacing $Z$ with 
$$
\tilde Z=\mathcal{H}_{\iota,\Delta}(\xi)  (y(X_2)-y(X_1)), 
$$
since 
$$
E[|\frac{\xi}{\sqrt{\Delta}} (y(X_2)-y(X_1))|_2^2]=E[|\xi \langle \nabla y(X_1),\sigma(X_1)\xi \rangle|^2_2]+O(\Delta^{1/2})=O(1).
$$
Similarly, in the presence of Malliavin weights for higher order derivatives, control variates based on approximations of the lower order partial derivatives 
of $y$ can be pre-computed, see e.g. \cite{AA} for a heuristic argument and \cite{BZG} for a theoretical analysis of this `preliminary control variates' technique. For our comparison, we therefore assume that control variates can be pre-computed 
at negligible computational costs such that $Z$ can be replaced by a random variable with bounded variance. Then, we require (up to a log-factor) 
$
L^{\textnormal{now}}_\Delta \sim \Delta^{-\frac{D}{Q+1}+2}
$
samples to obtain a mean-squared error of the order $\Delta$ (independently of the order of the partial derivative which is to be approximated). Taking into account that each sample can be sorted into the cubes at logarithmic 
cost and that the regression matrix has block-diagonal form due to the local structure of the basis functions, the overall cost is 
(up to a log-term) proportional to the number of samples $L^{\textnormal{now}}_\Delta$. Hence, the `regression now'-algorithm converges 
at a rate of  $(Q+1)/(D+2(Q+1))$  as a function of complexity, if suitable control variates can be constructed. This rate approaches the Monte-Carlo rate of $1/2$, as the 
dimension-to-smoothness ratio decreases to zero. 

We note that the pre-computation of suitable control variates may be nontrivial in practice.
Without the use of control variates, the required  simulation cost behaves as 
$
\Delta^{-\frac{D}{Q+1}+2+|\iota|_1}
$
leading to the slower convergence rate of  $(Q+1)/(D+(2+|\iota|_1)(Q+1))$ as a function of complexity. For comparison to the RAWBFST algorithm, we state in Table \ref{tab:now} the convergence 
rates for the `regression now'-algorithm with control variates for the same choices of $D$ and $Q$ as in Table \ref{tab:raw} for RAWBFST.

\begin{table}
\centering
\begin{tabular}{|c||c|c|c|c|c|c|}
\hline
\vspace{-8pt} &&&&&&\\
$Q \backslash D $ & 1& 3 & 5& 7 & 10 & 15\\
\hline  \hline  3  & 0.44 & 0.36 & 0.31 &0.27 & 0.22 & 0.17 \\  
\hline  4 & 0.45  & 0.38  & 0.33  & 0.29  & X & X \\
\hline  5  & 0.46 & 0.40 & 0.35 &X & X & X \\
\hline
\end{tabular}
\caption{Convergence rates of the `regression now'-algorithm with control variates for various choices of the dimension $D$ and 
the maximal polynomial degree $Q$.}\label{tab:now}
\end{table}
We finally note that the evaluation of the `regression now'-approximation at a single point can be achieved at logarithmic cost,
as the local structure of the basis functions is inherited by the approximation $\hat z^{L,\,\textnormal{now}}$. Hence, the evaluation cost at a new random sample of size 
$L^{\textnormal{reg}}_\Delta$ causes the same cost as the construction of the regression coefficients does. 

\subsubsection{Comparison to `regression later'}
 
The `regression later' approach was suggested in \cite{GY} in the context of optimal stopping, and was later on
applied to backward stochastic differential equations in \cite{BS} under the name `martingale basis method', to insurance liability modeling \cite{PS}, and to discrete time stochastic control
in \cite{BP}, among others. To the best of our knowledge, the first `regression later' algorithm with stochastic derivative weights is due to \cite{BS}. 
In contrast to `regression now', the basis functions in the `regression later' approach depend on $X_2$ and not on $X_1$. 

Precisely, one again chooses a set of basis functions 
 $\eta(x)=(\eta_1(x),\ldots,\eta_K(x))$,  and generates independent samples $(X_{1,l},\xi_{l})$, ${l=1,\ldots,L}$ (which are assumed to be independent of $(X_1,X_2)$).
 After solving the linear least-squares problem
 \begin{equation}\label{reg:later}
 \hat \alpha^{L,\,\textnormal{later}}=\arginf_{\alpha \in \R^K} \frac{1}{L} \sum_{l=1}^L \left| y(X_{2,l})- \eta(X_{2,l})\alpha\right|^2, \quad X_{2,l}:=X_{1,l}+b(X_{1,l})\Delta+\sigma(X_{1,l})\sqrt{\Delta}\xi_l,   
 \end{equation}
one thinks of $\eta(X_2)\hat \alpha^{L,\,\textnormal{later}}$ as an approximation of $y(X_2)$ and hence approximates the regression function $z$ by
\begin{equation}\label{eq:m_later}
\hat z^{L,\,\textnormal{later}}(x)=E\left[\left.\mathcal{H}_{\iota,\Delta}(\xi)  \eta(X_2)\hat \alpha^{L,\,\textnormal{later}}\right|(X_{1,l},\xi_{l})_{l=1,\ldots,L}, X_1=x \right]= \tilde \eta(x)\hat \alpha^{L,\,\textnormal{now}},
\end{equation}
where the entries of $\tilde \eta$ are given by 
\begin{equation}\label{eq:basis_closedform}
\tilde \eta_k(x):= E\left[\left. \mathcal{H}_{\iota,\Delta}(\xi)  \eta_k(X_2) \right|X_1=x\right].
\end{equation}
Note that the linear structure of the regression estimate  $\eta(x)\hat \alpha^{L,\,\textnormal{later}}$ of $y(x)$ is crucial for the closed-form computation of the conditional expectations in \eqref{eq:m_later}. Hence,
in contrast to the `regression now' case, no a-posteriori truncation   of the form $[\eta(x)\hat \alpha^{L,\,\textnormal{later}}]_B$ can be applied to stabilize the empirical regression,
which is why we utilize the SVD truncation in the RAWBFST algorithm.

Noiseless regression problems as in \eqref{reg:later} have been studied e.g. 
in
 \cite{Bal}, \cite{CDL}, \cite{CM}, and the references therein, under the assumption that the basis functions are orthonormal with respect to the law of $X_2$. In the applications, we have in mind, the presence 
 of the Malliavin Monte Carlo weights further imposes the severe restriction that the conditional expectations of the form 
 \eqref{eq:basis_closedform} must be available in closed form. If we again wish to apply local polynomials as basis functions as in \eqref{eq:locpoly},
 this restriction may be hard to meet beyond the Brownian motion case ($b=0$, $\sigma=\mathbb{I}_D$), which is detailed in Section 4 of \cite{Bal}.
 A second drawback of the `regression later' approach is that the conditional expectations in \eqref{eq:basis_closedform} for local 
 polynomials typically lead to functions $\tilde \eta_k(x)$ with global support. Hence, evaluating the corresponding `regression later' approximation 
 to a partial derivative at single point induces a cost proportional to the number of cubes, compared to the logarithmic costs for the RAWBFST
 algorithm or the `regression now' algorithm. This high evaluation cost of the `regression later' algorithm may be considered prohibitive in multidimensional dynamic programming 
 applications like the one discussed in Section \ref{sec:appBSDE} below.

\section{Numerical Illustrations}\label{sec:numerics}

In this section, we provide three numerical illustrations of Algorithm \ref{alg:1}. In the first illustration, we approximate the second derivative of a univariate function. This is a direct application of Theorem \ref{thm:main}. The second illustration is option pricing in the uncertain volatility model, a challenging reference problem in the literature on second-order BSDEs and fully non-linear partial differential equations \cite{GHL,AA,KLP}. The third illustration is the solution of a five-dimensional first-order BSDE considered in \cite{GLTV}. The latter two illustrations are exploratory studies that confirm the excellent performance of our algorithm beyond the setting that is strictly covered by our theoretical analysis. These problems are typical of the applications we envision for our algorithm, as many layers of conditional expectations need to be iterated and we need highly accurate approximations of functions together with their derivatives. 

\subsection{Approximating a second derivative}\label{sec:app2ndD}

We wish to apply Algorithm  \ref{alg:1} to approximate 
\begin{equation}
  z(x)=E\left[\left.\frac{\xi^2 -1}{\Delta}y(X_1+\sqrt{\Delta}\xi) \right|X_1=x\right],\quad x\in \R,
 \end{equation}
where $X_1$  and $\xi$ are independent and standard normal. This setting corresponds to \eqref{eq:cond_ex} with $D=1$, $\iota=2$, $b=0$, and $\sigma=1$. For the function $y$, we consider $y(x)=x^2 \exp(-x^2/2)$. The function $z$ thus approximates the second derivative $y''$ of $y$ with $y''(x)=z(x) +O(\Delta)$. Specifically, we can benchmark the output of our algorithm against the closed-form expressions $y''(x)=(x^4-5x^2+2) \exp(-x^2/2)$ and
\[
z(x) = \frac{x^4-(5+4\Delta-\Delta^2)x^2+2+3\Delta-\Delta^3 }{(1+\Delta)^{\frac92}} \exp\left(-\frac{x^2}{2(1+\Delta)}\right).
\]
In line with Theorem \ref{thm:main}, our main error criterion is the root mean squared error
\[
\mathcal{E}(\Delta,\rho):=\hat{E}\left[\int_{\R}  |z(x)-\hat{z}(x|\Theta,\Delta,\rho)|^2\mu_1(dx) \right]^\frac12
\]
where $\hat{z}(\cdot|\Theta,\Delta,\rho)$ denotes the output of one run of a Julia implementation of Algorithm  \ref{alg:1} in dependence on the Monte Carlo sample $\Theta$, the step size parameter $\Delta$ and the convergence rate parameter $\rho$. $\hat{E}$ denotes an empirical average over 100 runs of the algorithm, i.e., over 100 independent realizations of $\Theta$. $\mu_1$ is the standard normal distribution of $X_1$. The interior univariate integral over $x$ is computed using adaptive quadrature as implemented in Julia's \texttt{quadgk} command.  As a reference, we also compute the discretization error between $y''$ and $z$, 
$$
\bar{\mathcal{E}}(\Delta):=\left(\int_{\R}  |z(x)-y''(x)|^2\mu_1(dx) \right)^{\frac12}.
$$

In our numerical experiments, we vary $\rho=2,3,4$ and $\Delta=2^{-n}$, $n=3,\ldots 14$. The standard normal distribution for $X_1$ implies $C_{1,f}=C_{2,f}=1$. The polynomial degree $Q$, we set as $Q=\rho+\iota+1=\rho+3$. In dimension $D=1$, a direct computation gives
$c^*_{\textnormal{paths}}(Q,1)=\frac{2}{3}+\frac{8}{3}(Q+1)^2$. Accordingly, we choose $c_{1,\textnormal{paths}}= 1.1\, c^*_{\textnormal{paths}}(Q,1)$ and $c_{2,\textnormal{paths}}=1$. The parameters $\gamma_{\textnormal{cube}}$, $\gamma_{1,\textnormal{trunc}}$, $\gamma_{2,\textnormal{trunc}}$ and $L$ are then simply chosen using the formulas given in Theorem \ref{thm:main} while $\tau$ is chosen as the midpoint of the admissible interval given there which implies $\tau = (1-1.1^{-0.5})/2 = 0.0233$. The three remaining parameters that scale the truncation levels and the density of cubes we set as $c_{\textnormal{cube}}=c_{1,\textnormal{trunc}}=c_{2,\textnormal{trunc}}=5$. 

The three black curves in Figure \ref{fig1} plot $\log_{10}(\mathcal{E}(\Delta,\rho))$ against $\log_{10}(\Delta^{-1})$ for $\rho=2,3,4$. Each line is contrasted against a gray line through the final data point with slope equal to the theoretical convergence rate of $\rho/2$ guaranteed by Theorem \ref{thm:main}. We observe that the empirical decay is broadly in line with theory but slightly faster. For comparison, the dotted gray line depicts the discretization error $\log_{10}(\bar{\mathcal{E}}(\Delta))$ which vanishes at a rate close to 1 as expected. Consequently, for $\rho=2$ the approximation error $\mathcal{E}(\Delta,\rho)$ of our algorithm is of a similar magnitude as the discretization error $\bar{\mathcal{E}}(\Delta)$ for all considered values of $\Delta^{-1}$.

\begin{figure}
\centering
\includegraphics[height=240pt]{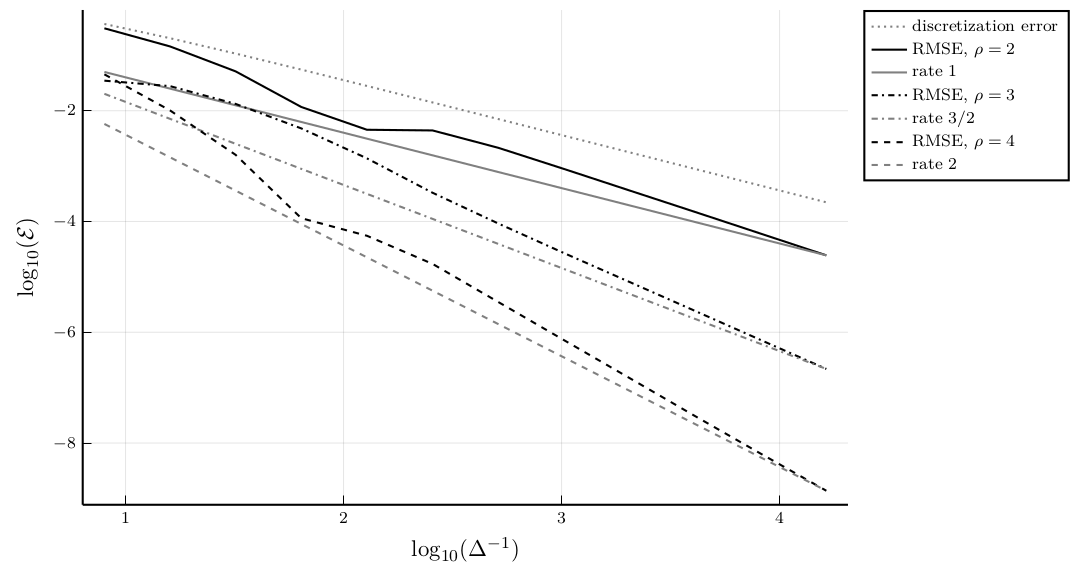}
\caption{Approximation errors against $\Delta^{-1}$ in a $\log_{10}$-$\log_{10}$-plot. The black curves correspond to $\mathcal{E}(\cdot,2)$ (solid line), $\mathcal{E}(\cdot,3)$ (dash-dotted line) and $\mathcal{E}(\cdot,4)$ (dashed line). The corresponding gray lines illustrate the theoretical slopes of 1, 1.5 and 2. The dotted gray line depicts the discretization error $\bar{\mathcal{E}}(\cdot)$.} 
\label{fig1}
\end{figure}

For selected values of $\Delta^{-1}$, Table \ref{tab1} provides further details like number of cubes, number of samples per cube and run times.  For sufficiently small $\Delta$, run times should behave (up to log-factors) like $\Delta^{-\gamma_{\textnormal{cube}}}$. In our implementation, we have $\gamma_{\textnormal{cube}}=\frac{\rho+2}{2\rho+8}$ and essentially the number of cubes grows like $\Delta^{-\gamma_{\textnormal{cube}}}$
while the number of samples per cube only depends on $\Delta$ logarithmically. Thus, thinking optimistically, increasing $\Delta^{-1}$ by a factor 8 should increase the total number of samples and
thus run time by factors of 2, 2.1 and 2.2 for $\rho=2,3,4$. Inspecting the actual run times in the table shows that this type of reasoning is too optimistic in our situation as it ignores logarithmic
factors and rounding effects. Comparing, e.g., the columns associated with $\Delta^{-1}=1024$ and $\Delta^{-1}=8192$, we see that the numbers of cubes $|I_\Delta|$ increase by factors 2.2, 2.5, and 2.52 rather than 
2, 2.1 and 2.2. Similarly, due to logarithmic growth, the number of samples per cube $L_\Delta$ is far from constant, increasing by a factor of about 1.3. Thus, the total number of samples $L_\Delta |I_\Delta|$ increases by factors between 2.86 and 3.28 as $\Delta^{-1}$ is increased by a factor 8. As we implement the thin SVD,  see Remark \ref{rem:svd}, this total number of samples should behave like run time. Indeed, the relative increases in run times we observe throughout the table are broadly consistent with those in the number of samples (but slightly smaller).

\begin{table}
\centering
\begin{tabular}{ccccc}
\hline
\vspace{-8pt} &&&&\\
$\Delta^{-1}$ & 16& 128 & 1024 & 8192\\ 
\hline
\vspace{-8pt} &&&&\\
$\bar{\mathcal{E}}$ & 0.2037 & 0.0280 & 0.0035 & 0.0004\\
\hline
\hline
$\rho=2$ &&&&\\
\hline
\vspace{-8pt} &&&&\\
$\mathcal{E}$ & 0.1447 & 0.0045 & 0.0009  & $6.03\cdot 10^{-5}$\\
$|I_\Delta|$ & 4 & 8 &20&44\\
$L_\Delta$ &590 &1032 &1475  &1917 \\
run time in $s$ & 0.0042& 0.0106& 0.0355& 0.0966\\
\hline
\hline
$\rho=3$ &&&&\\
\hline
\vspace{-8pt} &&&&\\
$\mathcal{E}$ & 0.0278 &$0.0014$& $2.70\cdot 10^{-5}$ & $7.25\cdot 10^{-7}$\\
$|I_\Delta|$ & 4 & 12 &28&70\\
$L_\Delta$ &1202 &2103 &3005  &3906 \\
run time in $s$ & 0.0084& 0.0370& 0.1067& 0.3201\\
\hline
\hline
$\rho=4$ &&&&\\
\hline
\vspace{-8pt} &&&&\\
$\mathcal{E}$ & 0.0101& $5.52\cdot 10^{-5}$& $7.23\cdot 10^{-7}$ & $6.43\cdot 10^{-9}$\\
$|I_\Delta|$ & 6 & 14 &38&96\\
$L_\Delta$ &2091 &3658 &5226  &6794 \\
run time in $s$ & 0.02144& 0.06878& 0.2566& 0.7637\\
\hline
\end{tabular}
\caption{Approximation errors and algorithmic parameters in dependence on $\Delta$ and $\rho$. Run times are for a Julia 1.4.2 implementation on a Windows desktop PC with an Intel Core i7-6700 CPU with 3.4GHz.}\label{tab1}
\end{table}

\subsection{Uncertain Volatility Model}\label{sec:appUVM}
In this section, we apply Algorithm \ref{alg:1} to approximate option prices in the Uncertain Volatility Model (UVM) due to \cite{ALP95,L95}. In this model, it is assumed that the volatility process of the stock underlying an option contract is not known for certain but only known to lie in an interval $[\sigma_{l},\sigma_{h}]$. Option prices are then computed as suprema over all admissible volatility processes in the interval. With this modification, the linear Black-Scholes partial differential equation that arises under a known, constant volatility is replaced by the fully non-linear Black-Scholes-Barenblatt equation. This makes pricing in the UVM a challenging problem even in low dimensions that is frequently used to test new algorithms \cite{GHL,AA,KLP,BGS2}.

In this paper, we directly introduce the discrete time, non-linear backward recursion for pricing that arises when the discretization scheme of \cite{FTW} is applied to the UVM. We refer, e.g., to \cite{BGS2} for a more detailed derivation from the continuous time setting. The time horizon $[0,T]$ is discretized into $N$ subintervals of equal length $\Delta=T/N$ from $t_0=0$ to $t_N=T$. At maturity time $T$, the payoff of a given option contract is known to be some function $y_N:\mathbb{R}\rightarrow \mathbb{R}$. In line with the literature, we consider the pricing of a Call spread option which corresponds to the choice 
\[
y_N(x) = \max\left(0, s_0 e^{(\mu-\frac12 \sigma_r^2)T + \sigma_r x} - K_1\right)-\max\left(0, s_0 e^{(\mu-\frac12 \sigma_r^2)T + \sigma_r x} - K_2\right).
\] 
Here, $s_0>0$ is the initial stock price, $\mu\in \mathbb{R}$ is the drift under the pricing measure, $K_1,K_2>0$ is a pair of strike prices, and $\sigma_r$ is the so-called reference volatility, a choice parameter in the discretization. Notice that $x$ takes the place of the Brownian motion driving the stock price and not that of the stock price itself. Then, for $i=1,\ldots,N$, price functions $y_{i}$ and $y_{i-1}$ at times $t_i$ and $t_{i-1}$
are related through the recursion
\begin{equation}\label{eq:recursion}
y_{i-1}(x)=G(z_{i-1,0}(x),z_{i-1,1}(x),z_{i-1,2}(x)) 
\end{equation}
where, for $\iota=0,1,2$, 
\begin{equation}\label{eq:zij}
z_{i-1,\iota}(x)=E[\mathcal{H}_{\iota,\Delta}(\xi)  y_i(X_1+ \sqrt{\Delta}\xi)|X_1=x]
\end{equation}
and where $G:\mathbb{R}^3\rightarrow \mathbb{R}$ is given by 
\begin{align*}
G(z_0,z_1,z_2)
=z_0+ \frac{\Delta}{2} (z_2-\sigma_r z_1) \left(  \frac{\sigma_h^2}{\sigma_r^2}{\bf 1}_{\{z_2>\sigma_r z_1\}}
+\frac{\sigma_l^2}{\sigma_r^2}{\bf 1}_{\{z_2\leq \sigma_r z_1\}} -1 \right). 
\end{align*}
The quantity of interest is the option price at the initial time and initial value, $y_0(0)$. In the function $G$, the terms in the round brackets can be interpreted as switching from the reference volatility $\sigma_r$ to either $\sigma_h$ or $\sigma_l$ for the time interval from $t_{i-1}$ to $t_i$. Which of these two alternatives is chosen depends on the terms $z_1$ and $z_2$ which correspond to the first two derivatives. Thus, the pricing recursion depends in a non-linear way on the second derivative, underlining the fact that it is a discretization of a fully non-linear partial differential equation.

Evidently, the difficult part in solving the recursion \eqref{eq:recursion} numerically is the computation of the conditional expectations in \eqref{eq:zij}. This is exactly the problem RAWBFST is designed for. We thus propose the following algorithm for constructing a sequence $(\hat{y}_i)_i$, $i=0,\ldots,N$ of real-valued functions that approximate $(y_i)_i$. 

\begin{alg}\label{alg:3} {\hspace{1pt} \vspace{-10pt}}\\
\begin{itemize}
\item Initialization: $\hat{y}_N\equiv y_N$.
\item For $i=N,\ldots,1$:
	\begin{itemize}
		\item For $\iota=0,1,2$: call Algorithm \ref{alg:1} with input $y \equiv \hat{y}_{i}$ and output $\hat{z}_{i-1,\iota}$.
		\item  Define $\hat{y}_{i-1}$ via $\hat{y}_{i-1}(x)=G(\hat{z}_{i-1,0}(x),\hat{z}_{i-1,1}(x),\hat{z}_{i-1,2}(x))$.
	\end{itemize}
\item Return $\hat{y}_{0}(0)$.
\end{itemize}
\end{alg}

\begin{rem}
(i) This algorithm is very similar to the usual LSMC algorithms for BSDEs. We simply replace `regression now' (as, e.g., in \cite{LGW}) or `regression later' (as in \cite{BS}) by RAWBFST, Algorithm \ref{alg:1}. 

(ii) In line with the formal statement of Algorithm \ref{alg:1} above, we call it three times in each time step of  Algorithm  \ref{alg:3}, once for every relevant value of $\iota$.  In our practical implementation, we exploit that only the evaluation in Step 5 of the algorithm depends on $\iota$.  Thus, the first four steps of computing the regression coefficients need to be performed only once.

(iii) As discussed, e.g., in \cite{GLTV}, memory usage and scope for parallelization are potential bottlenecks in LSMC algorithms. In both regards, Algorithm \ref{alg:3} has excellent properties. Monte Carlo samples are generated only within the calls of Algorithm \ref{alg:1}. No $N$-step sample trajectories are produced or stored over time. Moreover, the regressions within each cube can be computed independently. Thus, in principle, one can implement Algorithm \ref{alg:1} in such a way that (only) the full set of $(1+Q)|I|$ regression coefficients from the previous step is communicated to one processor for each of the $|I|$ cubes. This processor simulates $L$ samples and computes and returns the $1+Q$ coefficients that define the local polynomials on this cube. This results in very modest memory requirements for typical choices of $Q$, $|I|$ and $L$.
\end{rem}

In order to set the parameters for RAWBFST, we need to make some `guesses' about the time discretization error of the approximation scheme \eqref{eq:recursion}--\eqref{eq:zij} to the Black-Scholes-Barenblatt equation and about the error propagation, which results from 
nesting the RAWBFST approximation of the true conditional expectations backwards in time. For the time-discretization error it is known from \cite{GL}, 
that the probabilistic scheme converges at the order $\Delta$ in the case of a quasi-linear parabolic PDE,
if the coefficient functions are sufficiently smooth and the forward SDE can be sampled without discretization error. Our numerical results below support this convergence behavior of the time discretization error in the UVM test case, although it is not backed by the theoretical error analysis for the non-linear case in \cite{FTW}. The error propagation 
for the approximation of the conditional expectations is of the order $\Delta^{-1/2}$ for `regression now' in the quasi-linear case, see \cite{LGW}. For the parameter choice of the algorithm, we here assume that the same is true for RAWBFST in the non-linear case.
Hence, we may hope for an overall convergence of the order $\Delta$, if the conditional expectations in \eqref{eq:zij} are approximated to the order $\Delta^{3/2}$ for $\iota=0$ and to the order $\Delta$ for $\iota=1,2$. In line with Theorem \ref{thm:main} and applying 
local polynomials of degree up to $Q=4$, 
we, thus, set  $\gamma_{\textnormal{cube}}=0.4$, $\gamma_{1,\textnormal{trunc}}=3$, $\gamma_{2,\textnormal{trunc}}=6$. We choose a slightly finer space discretization than before, setting $c_{\textnormal{cube}}=2$ 
while keeping $c_{1,\textnormal{trunc}}=c_{2,\textnormal{trunc}}=5$. The above parameter choice implies $c^*_{\textnormal{paths}}(Q,1)=67.33$. We then choose $c_{1,\textnormal{paths}}= 1.1\, 
c^*_{\textnormal{paths}}(Q,1)=74.07$, $c_{2,\textnormal{paths}}=1$ and, thus, $L=\lceil 3 \cdot c_{1,\textnormal{paths}} \log(\Delta^{-1})\rceil$.  As before,  we let $\tau = 0.0233$.

The only parameter we choose adaptively at each step $i$ is the parameter $C_{2,f}$. Since the underlying state process $X$ is a Brownian motion with $x_0=0$, the mechanical choice would be to set $C_{2,f}=t_{i-1}=(i-1) \Delta$. This corresponds to the variance of the Brownian motion at time $t_{i-1}$  which takes the role of $X_1$ in Algorithm \ref{alg:1} at step $i$. However, this would lead to degeneration at time $i=0$. Intuitively, we need to approximate the functions $y_i$ in a small interval around $0$ if we wish to approximate derivatives in $0$ well. We thus choose $C_{2,f}=\sigma_0^2 +t_{i-1}$ in our implementation, $\sigma_0^2=0.1 T$. This corresponds to replacing our standard Brownian motion by one that was started in $0$ at time $-\sigma_0^2$. For the parameters of the spread option and the UVM, we follow \cite{GHL} and the subsequent literature, choosing $s_0=100$, $\mu=0$, $\sigma_l=0.1$, $\sigma_h=0.2$, $\sigma_r=0.15$, $T=1$, $K_1=90$ and $K_2=110$. In this setting, the continuous-time limit $\Delta \downarrow 0$ of ${y}_{0}(0)$ is given by  11.20456 as shown in \cite{Vanden} which provides closed-form pricing formulas for this type of product in the UVM. 

The black line in Figure \ref{fig:UVM1} shows the mean squared error $\mathcal{E}(\Delta):=\hat{E}\left[(\hat{y}_{0}(0)-11.20456 )^2\right]^\frac12$ in a log-log plot against the number of time steps $\Delta=2^{-n}$, $n=4,\ldots 12$. Here, the empirical mean $\hat{E}$ denotes an average over 100 independent realizations of $\hat{y}_{0}(0)$. We see that the algorithm converges at a rate which is similar to the expected rate of 1 in the stepsize $\Delta$, which is depicted in gray. Figure \ref{fig:UVM2} gives an analogous plot of $\mathcal{E}(\Delta)$ against the run time of the algorithm. In our implementation, we have $\gamma_{\textnormal{cube}}=0.4$. As the number of time steps behaves like $\Delta^{-1}$, we expect run time to behave like $\Delta^{-1.4}$. This suggests a convergence rate of $5/7$ for $\mathcal{E}(\Delta)$ against run time. The gray line with slope $-5/7$ demonstrates that the empirical convergence rate is very much in line with this reasoning. 

Table \ref{tab2} reports further summary statistics such as the mean and standard deviation of $\hat{y}_0(0)$. Together with the figures, the table confirms that
our approximation converges stably towards its limit at a rate that is faster than standard Monte Carlo for the approximation of a single expectation, and is in line with our heuristics. This is in marked contrast to earlier implementations of this example in \cite{GHL, AA, BGS2} which show that a stable approximation at $\Delta^{-1}=4096$ should not be taken for granted for regression-based Monte Carlo algorithms.  

\begin{figure}
\centering
\includegraphics[height=200pt]{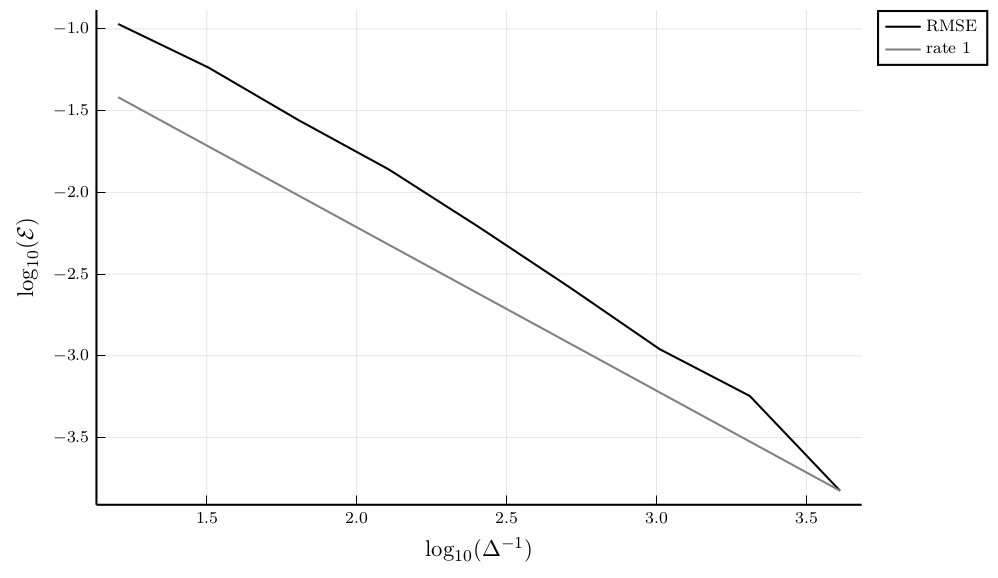}
\caption{Approximation errors against $\Delta^{-1}$ in a $\log_{10}$-$\log_{10}$-plot.} 
\label{fig:UVM1}
\end{figure}

\begin{figure}
\centering
\includegraphics[height=200pt]{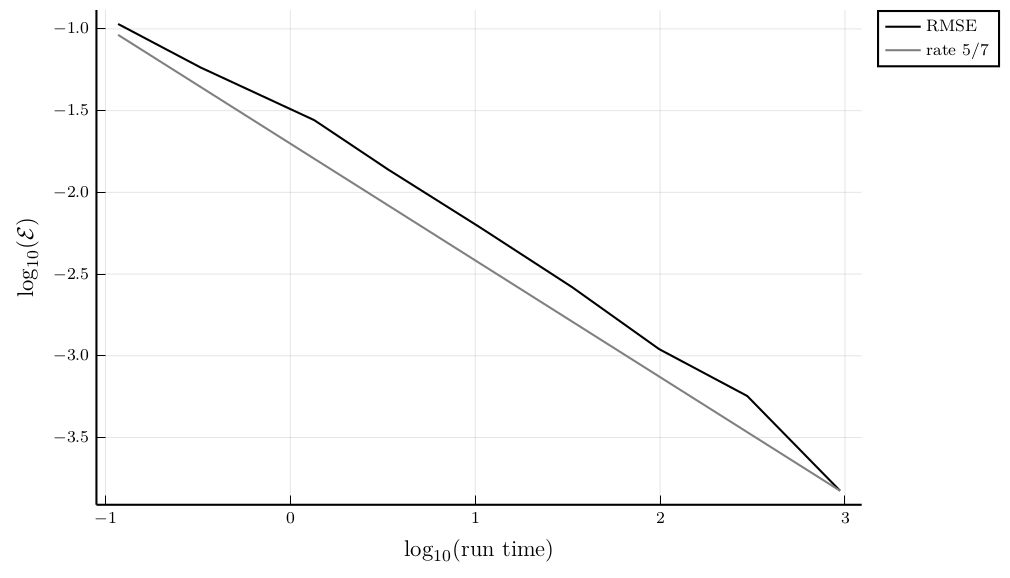}
\caption{Approximation errors against run time in a $\log_{10}$-$\log_{10}$-plot. Run times are for a Julia 1.4.2 implementation on a Windows desktop PC with an Intel Core i7-6700 CPU with 3.4GHz.} 
\label{fig:UVM2}
\end{figure}

\begin{table}
\centering
\begin{tabular}{cccc}
\hline
\vspace{-8pt} &&&\\
$\Delta^{-1}$ & mean & standard deviation & run time in $s$\\
\hline
\vspace{-8pt} &&&\\
16 & 11.0979&$1.10\cdot 10^{-2}$& 0.12\\
32 & 11.1466&$3.63\cdot 10^{-3}$& 0.33\\
64& 11.1770 &$2.14\cdot 10^{-3}$& 1.35\\
128& 11.1908&$9.93\cdot 10^{-4}$& 3.38\\
256& 11.1985&$5.81\cdot 10^{-4}$& 10.58\\
512& 11.2019&$2.76\cdot 10^{-4}$& 33.40\\
1024& 11.2035&$1.86\cdot 10^{-4}$& 98.78\\
2048& 11.2040&$8.56\cdot 10^{-5}$& 296.07\\
4096& 11.2044&$4.49\cdot 10^{-5}$& 940.70\\
\hline
\end{tabular}
\caption{Mean and standard deviation of $\hat{y}_0(0)$ across 100 runs of the algorithm.  Run times are for a single run of the algorithm. }\label{tab2}
\end{table}

\subsection{A multidimensional BSDE}\label{sec:appBSDE}
As a third illustration, we apply Algorithm \ref{alg:1} to the solution of a $D$-dimensional first-order BSDE due to \cite{GLTV}. We refer to Section 5 of their paper for details of the setting and note that, after discretization, the problem corresponds to recursively computing a sequence of functions $y_{N-1},\ldots,y_0:\mathbb{R}^D \rightarrow \mathbb{R}$ from a terminal condition $y_N$ similarly to the previous section. The time horizon $[0,T]$ is discretized into $N$ intervals of equal length $\Delta=T/N$. The terminal condition $y_N:\mathbb{R}^D \rightarrow \mathbb{R}$ is $y_N(x)=(1+\exp(-T-\sum_{d=1}^D x_d))^{-1}$. The functions $y_i$ and $y_{i-1}$ are related through
\[
y_{i-1}(x)=G(z_{i-1,0}(x),\ldots,z_{i-1,D}(x)) 
\]
where, for $d=0,\ldots,D$ and $x\in \mathbb{R}^d$ 
\begin{equation} \label{eq:BSDE_ce}
z_{i-1,d}(x)=E[\mathcal{H}_{\iota_d,\Delta}(\xi)  y_i(X_1+ \sqrt{\Delta}\xi)|X_1=x]
\end{equation}
and $\xi$ is $D$-dimensional standard normal. The multi-indices $\iota_d$ for $d\geq 1$ are the unit vectors in $\mathbb{N}_0^D$ while $\iota_0=0\in \mathbb{N}_0^D$. The function $G:\mathbb{R}^{1+D}\rightarrow \mathbb{R}$ is given by 
\begin{align*}
G(z_0,\ldots,z_D)=z_0 +\Delta  \left(z_0 -\frac{1}{D}-\frac{1}{2} \right)\sum_{d=1}^D z_d.
\end{align*}
This backward recursion thus corresponds to a time discretization of a first-order BSDE driven by a $D$-dimensional Brownian motion $X$. 
Relying on the well-known connection between first-order BSDEs and semi-linear parabolic partial differential equations, one can
show that the sequence of functions $y_i$ can be understood as a time discretization of the solution $u:[0,T] \times \mathbb{R}^D \rightarrow \R$ to the partial differential equation
\[
0 = \frac{\partial u}{\partial t} + \sum_{d=1}^D \frac{\partial^2 u}{\partial x_d^2}  +   \left(u -\frac{1}{D}-\frac{1}{2}  \right) \sum_{d=1}^D  \frac{\partial u}{\partial x_d}
\]
with terminal condition $u(T,\cdot)=y_N(\cdot)$.  Our goal is to compute $u(0,0)$, the continuous-time limit of $y_0(0)$, which is equal to $1/2$ as stated in \cite{GLTV}. 

To compute approximate solutions $\hat{y}_{N-1},\ldots, \hat{y}_{0}$, we adapt Algorithm \ref{alg:3} in the obvious way. 
Throughout, we fix $T=1$ and $D=5$. Due to the higher dimensionality, we adjust some parameters of the implementation of
the previous section to achieve an efficient allocation of computational effort across time and space. 
Calibrating the algorithm to an error of the order $\Delta^{1/2}$ and taking again the error propagation in the dynamic programming equation heuristically into account,
we wish to approximate the conditional expectations in \eqref{eq:BSDE_ce} to the order $\Delta$ for $d=0$ and to the order
$\Delta^{1/2}$ for $d=1,\ldots,D$. This corresponds to the choices $\rho=2$ and $\rho=1$ in Theorem \ref{thm:main}, respectively.
We thus choose $Q=3$, $\gamma_{\textnormal{cube}}=1/4$, $\gamma_{1,\textnormal{trunc}}=2$, and $\gamma_{2,\textnormal{trunc}}=3$
In particular, the number of cubes 
grows as $\Delta^{-5/4}$. We 
keep $c_{\textnormal{cube}}=2$ and $c_{2,\textnormal{trunc}}=5$ from the previous section but slightly decrease $r_1$ by 
increasing $c_{1,\textnormal{trunc}}$ to 20. For the constant $c^*_{\textnormal{paths}}(Q,D)$ we 
find $c^*_{\textnormal{paths}}(Q,D)=5890/3$. We keep the choices of $c_{1,\textnormal{paths}}= 1.1\,c^*_{\textnormal{paths}}(Q,D)$ and
thus $\tau = 0.0233$ but  decrease the number of trajectories by setting $c_{2,\textnormal{paths}}=0.5$. The choice of $L$ then follows 
Theorem \ref{thm:main}. For the adaptive truncation, we set $C_{2,f}=\sigma_0^2 +t_{i-1}$ with $\sigma_0^2=0.1$ as before. 
Finally, compared to the construction in Algorithm \ref{alg:1}, we shift the grid in space such that $0\in \mathbb{R}^D$ lies in the 
center of a cube. 

For comparison with the `regression now' algorithm of \cite{GLTV} we note that tailoring the latter algorithm to the same accuracy 
requires a grid in which the number of cubes also increases like $\Delta^{-5/4}$. However, the simulation cost per cube 
is of the order $\Delta^{-3}$, while in the RAWBFST algorithm the simulation cost per cube grows logarithmically in $\Delta^{-1}$.

We define the approximation error $\mathcal{E}(\Delta)$ like in the previous section but reduce the number of independent copies of $\hat{y}_0(0)$ for each value of $\Delta^{-1}$ to 20.

\begin{table}
\centering
\begin{tabular}{cccc}
\hline
\vspace{-8pt} &&&\\
$\Delta^{-1}$ & mean & standard deviation & run time in $s$\\
\hline
\vspace{-8pt} &&&\\
 10&  0.486427&  $5.01 \cdot 10^{-4}$ &    161\\
 20&  0.493735&  $2.52 \cdot 10^{-4}$ &  2031\\
 30&  0.497602&  $1.34 \cdot 10^{-4}$ &  7941\\
 40&  0.499836&  $8.33 \cdot 10^{-5}$   & 20820\\
 50&  0.501483&  $8.01 \cdot 10^{-5}$  & 43794\\
 60&  0.501333&  $8.01 \cdot 10^{-5}$   & 81339\\
 70&  0.501016&  $5.77 \cdot 10^{-5}$   &140032\\
\hline
\end{tabular}
\caption{Mean and standard deviation of $\hat{y}_0(0)$ across 20 runs of the algorithm.  Run times are for a single run of the algorithm. }\label{tab3}
\end{table}

\begin{figure}
\centering
\includegraphics[height=200pt]{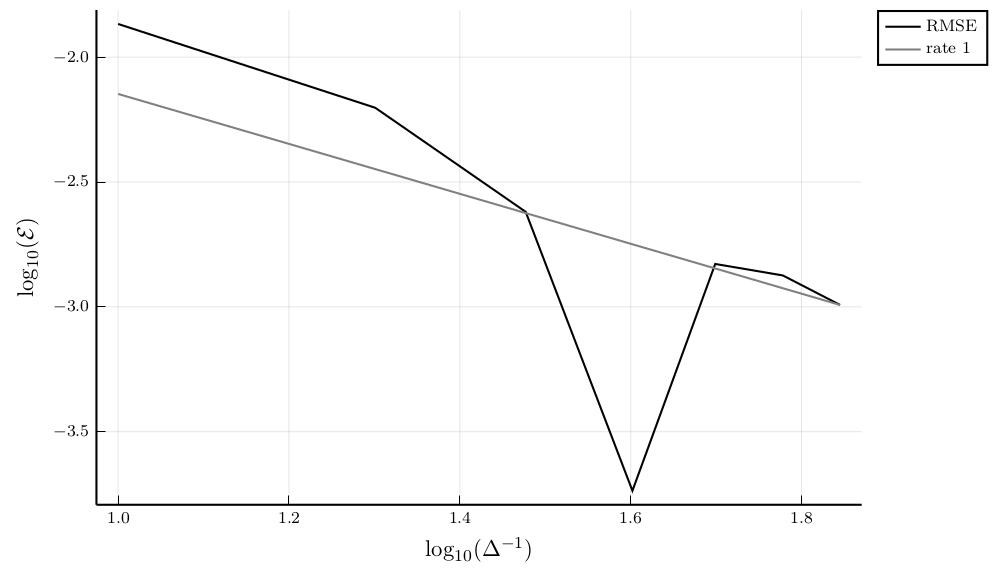}
\caption{Approximation errors against $\Delta^{-1}$ in a $\log_{10}$-$\log_{10}$-plot.} 
\label{fig:Gobet1}
\end{figure}

\begin{figure}
\centering
\includegraphics[height=200pt]{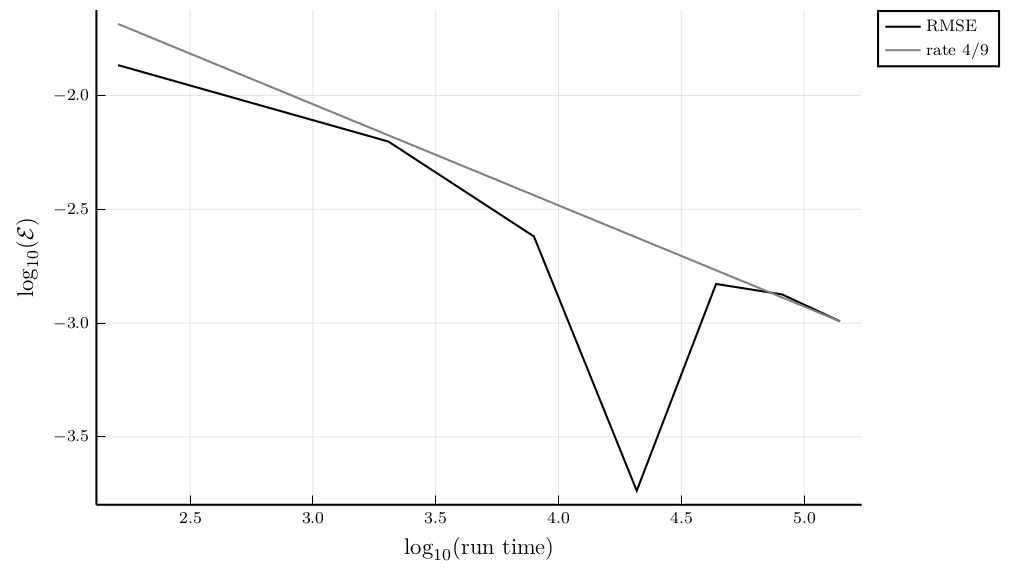}
\caption{Approximation errors against run time in a $\log_{10}$-$\log_{10}$-plot. Run times are for a Julia 1.4.2 implementation on a Windows desktop PC with an Intel Core i7-6700 CPU with 3.4GHz.} 
\label{fig:Gobet2}
\end{figure}

As seen in in Table \ref{tab3}, the bias, i.e. the distance to 0.5, switches its sign around $\Delta^{-1}=40$ which explains the steep local drop in the RMSE we see in Figure \ref{fig:Gobet1} where we plot the RMSE against the number of time steps. If we ignore this single point, the decay of the RMSE against  $\Delta^{-1}$ in the log-log-plot seems to be well-approximated by a line with slope -1 (or even steeper). This convergence rate is also largely supported by the behavior of standard deviations against $\Delta^{-1}$. For instance, the ratio between the standard deviations for $\Delta^{-1}=20$ and $\Delta^{-1}=60$ is close to the expected factor 3. 

Up to logarithmic factors,  we expect the overall run time to scale like $N^{1+D\gamma_{\textnormal{cube}}} = \Delta^{-9/4}$. By this reasoning, if we take the empirical convergence of order $\Delta$ in Figure \ref{fig:Gobet1} as a given, we expect a convergence rate $4/9$ against run time in Figure \ref{fig:Gobet2} which plots errors against run time. As seen from the gray line, this is in good agreement with the observed results.

\section{Least-squares interpolation with brute-force SVD truncation}\label{sec:interpolation}

In this section, we analyze the main building block of Algorithm \ref{alg:1}, namely the least-squares regression with brute-force SVD truncation, when there is no noise in the dependent variable. We also show that (up to log-factors) optimal
rates for the interpolation problem with fixed random design can be achieved under suitable assumptions.

\subsection{Algorithm and convergence analysis}

Suppose $X$ is an $\R^D$-valued random variable with law $\mu$ and $Y=y(X)$ for some measurable function $y:\R^D\rightarrow \R$. Given i.i.d copies $\mathcal{D}=(X^{(l)},Y^{(l)})_{l=1,\ldots,L}$, our aim is to estimate the function $y$. 
This problem can be interpreted as an interpolation problem with fixed random design, see \cite{KK, BDKKW}, and has been studied 
in the context of least-squares regression in \cite{Bal,CDL,CM} and the references therein.

We propose a modified least-squares approach, which utilizes a brute-force truncation of the singular value decomposition. To this end, let $\eta(x)=(\eta_1(x),\ldots,\eta_K(x))^\top$ denote a vector of basis functions. 
\begin{alg}\label{alg:2}
Fix a threshold $\tau>0$. 
\begin{itemize}
 \item Build the empirical regression matrix
$$
A=(\eta_k(X^{(l)}))_{l=1,\ldots,L;\;k=1,\ldots K}
$$
and perform a singular value decomposition of $A^\top$:
$$
A^\top=\U\D\V,\quad \U\in O(K),\; \V\in O(L),
$$
where $\D$ is the $K\times L$-matrix which has the singular values $s_1\geq s_2\geq \cdots\geq s_K\geq 0$ of $A$ on the diagonal and has zero entries otherwise.
\item If $s_K^2\geq L \tau$, let
$$
\alpha_L=\U\D^\dagger \V \;(Y^{(1)},\ldots, Y^{(L)})^\top
$$
where the pseudoinverse $\D^\dagger$ of $\D^\top$ is the $K\times L$-matrix which has $s_1^{-1},\ldots,s_K^{-1}$  on the diagonal and has zero entries otherwise.

 If $s_K^2<L \tau$, let
$$
\alpha_L=0\in\mathbb{R}^K.
$$
\item Return 
$$
\hat y_L(x):=\alpha_L^\top \eta(x)=\sum_{k=1}^K \alpha_{L,k} \eta_k(x)
$$
as an approximation of $y$.
\end{itemize}
\end{alg}

\begin{rem}\label{rem:svd}
(i) Truncated singular value decomposition is a popular regularization method for ill-conditioned linear regression problems, see e.g. \cite{Ha}. One approximates 
the solution to the regression problem by using the largest $t$ singular values only, i.e. one chooses the coefficient vector
$$
\hat \alpha_t=\U\D_t^\dagger \V \;(Y^{(1)},\ldots, Y^{(L)})^\top,
$$
where $\D_t^\dagger$ is the  $K\times L$-matrix, which has $s_1^{-1},\ldots,s_t^{-1},0,\ldots,0$ on the diagonal and has zero entries otherwise, and $t$ can be interpreted as a regularization parameter.  In contrast, in our brute-force SVD truncation, we either keep all singular values (if the smallest singular value is sufficiently large),
or we completely discard all the singular values (otherwise). 

(ii) In practical implementations with $L \gg K$, we recommend replacing the SVD above by the thin SVD (\cite{GVL}, p.72). 
To this end, consider the  SVD $A^\top=\U\D\V$ from above. Denote by $\bar{\D}$ and $\bar{\D}^\dagger$ the $K\times K$ matrices containing the first $K$ 
columns of $\D$ and $\D^\dagger$ and by $\bar{\V}$ the $K \times L$ matrix containing the first $K$ rows of $\V$. Due to the identities $\D\V=\bar{\D}\bar{\V}$ and $\D^\dagger\V=\bar{\D}^\dagger\bar{\V}$, it 
suffices to compute $\bar{\V}$ and $\bar{\D}$ when implementing Algorithm \ref{alg:2}, avoiding the large $L\times L$ matrix $\V$. 
\end{rem}

For the error analysis, 
we write
$$
R=(E[\eta_k(X)\eta_\kappa(X)])_{k,\kappa=1,\ldots,K}.
$$ 
We assume that $R$ has full rank and that we have access to bounds on the extremal eigenvalues of $R$:
$$
0<\lambda_*\leq \lambda_{\min}(R) \leq \lambda_{\max}(R)\leq \lambda^*<\infty.
$$

\begin{thm}\label{thm:noiseless}
 Suppose that the basis functions $\eta_k$ are bounded. Let
$$
m:=\sup_{x\in \supp(X)} \sum_{k=1}^K |\eta_k(x)|^2
$$
and $\tau=(1-\epsilon)\lambda_*$ for some $\epsilon\in (0,1)$.
Then,
\begin{eqnarray*}
 && E\left[\int_{\R^D} |y(x)-\hat y_L(x)|^2 \mu(dx) \right]\\ &\leq& \left(1+ \frac{\lambda^*}{\lambda_*(1-\epsilon)}\right) \inf_{\alpha\in \R^K} \int_{\R^D} |y(x)-\alpha^\top \eta(x)|^2 \mu(dx)  \\
&&+  2K\exp\left\{-\frac{3 \epsilon^2 L}{6 m\lambda^*/\lambda_*^2+ 2\epsilon(m/\lambda_*+\lambda^*/\lambda_*)} \right\} \int_{\R^D} |y(x)|^2 \mu(dx) ,
\end{eqnarray*}
where $\hat y_L$ is constructed by Algorithm \ref{alg:2}. 
\end{thm}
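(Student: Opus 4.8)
The plan is to condition on whether the brute-force SVD truncation is triggered. Write $\hat R := \frac{1}{L}A^\top A = \frac{1}{L}\sum_{l=1}^L \eta(X^{(l)})\eta(X^{(l)})^\top$ for the empirical Gram matrix and observe that the acceptance condition $s_K^2\geq L\tau$ is precisely the event $B := \{\lambda_{\min}(\hat R)\geq\tau\}$. On $B^c$ the algorithm outputs $\hat y_L\equiv 0$, so the error there equals $P(B^c)\int_{\R^D}|y(x)|^2\mu(dx)$, which generates the entire second term of the bound. On $B$ the matrix $\hat R$ is invertible, the SVD-truncated coefficient vector coincides with the ordinary least-squares solution $\alpha_L=\hat R^{-1}\bigl(\frac{1}{L}\sum_l\eta(X^{(l)})y(X^{(l)})\bigr)$, and hence $\hat y_L$ is the orthogonal projection of $y$ onto $V:=\Span\{\eta_1,\ldots,\eta_K\}$ with respect to the empirical inner product $\langle f,g\rangle_L:=\frac{1}{L}\sum_l f(X^{(l)})g(X^{(l)})$.

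For the good event, first I would introduce the population projection $y^*=(\alpha^*)^\top\eta$ of $y$ onto $V$ in $\LL^2(\mu)$, so that $y-y^*\perp V$ and $\int|y-y^*|^2\mu(dx)=\inf_\alpha\int|y-\alpha^\top\eta|^2\mu(dx)$. Since $\hat y_L-y^*\in V$, a population Pythagoras splits the error as $\int|y-\hat y_L|^2\mu(dx)=\int|y-y^*|^2\mu(dx)+\int|y^*-\hat y_L|^2\mu(dx)$, with $\int|y^*-\hat y_L|^2\mu(dx)=(\alpha_L-\alpha^*)^\top R(\alpha_L-\alpha^*)\leq\lambda^*|\alpha_L-\alpha^*|^2$. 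To bound the coefficient gap I would switch to the empirical geometry: empirical optimality gives $y-\hat y_L\perp_L V$, hence an empirical Pythagoras yields $\|\hat y_L-y^*\|_L^2\leq\|y-y^*\|_L^2$, while on $B$ one has $\|\hat y_L-y^*\|_L^2=(\alpha_L-\alpha^*)^\top\hat R(\alpha_L-\alpha^*)\geq\tau|\alpha_L-\alpha^*|^2$. Chaining these gives $\int|y^*-\hat y_L|^2\mu(dx)\leq(\lambda^*/\tau)\|y-y^*\|_L^2$ on $B$; taking expectations, using $E[\|y-y^*\|_L^2]=\int|y-y^*|^2\mu(dx)$, $E[\mathbf{1}_B]\leq 1$, and $\tau=(1-\epsilon)\lambda_*$, assembles the factor $1+\lambda^*/(\lambda_*(1-\epsilon))$ in front of the projection error.

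For the bad event I would estimate $P(B^c)$ by a matrix Bernstein inequality applied to the centered i.i.d.\ summands $S_l:=\eta(X^{(l)})\eta(X^{(l)})^\top-R$, for which $\hat R-R=\frac{1}{L}\sum_l S_l$. Since $\tau=(1-\epsilon)\lambda_*\leq(1-\epsilon)\lambda_{\min}(R)$, Weyl's inequality $\lambda_{\min}(\hat R)\geq\lambda_*-\|\hat R-R\|$ shows $B^c\subseteq\{\|\hat R-R\|\geq\epsilon\lambda_*\}$. Boundedness of the basis gives $\|S_l\|\leq m+\lambda^*$, and the estimate $E[|\eta|^2\eta\eta^\top]\preceq mR$ yields the variance bound $\|E[S_l^2]\|=\|E[|\eta|^2\eta\eta^\top]-R^2\|\leq m\lambda^*$. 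Inserting the deviation level $t=\epsilon\lambda_*$, the boundedness constant $m+\lambda^*$, and the variance proxy $Lm\lambda^*$ into the two-sided matrix Bernstein bound $2K\exp\!\left(\frac{-(Lt)^2/2}{Lm\lambda^*+(m+\lambda^*)Lt/3}\right)$ reproduces, after multiplying numerator and denominator by $6/\lambda_*^2$, exactly the stated exponential factor.

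The hard part will be the good-event analysis, where the target error lives in $\LL^2(\mu)$ but the optimality of $\hat y_L$ holds only in the empirical norm. The argument above avoids any uniform comparison of $\hat R$ and $R$ over $V$: population orthogonality isolates $\int|y^*-\hat y_L|^2\mu(dx)$ and bounds it by $\lambda^*$ times the coefficient gap, and that gap is then controlled purely within the empirical geometry, where $\lambda_{\min}(\hat R)\geq\tau$ is guaranteed on $B$. The delicate bookkeeping is to see that $\int|y-y^*|^2\mu(dx)$ re-emerges with coefficient exactly $1$ (via $E[\mathbf{1}_B]\leq 1$) rather than inflated. The remaining steps --- that the truncated pseudoinverse equals the least-squares solution on $B$, the elementary spectral estimate $E[|\eta|^2\eta\eta^\top]\preceq mR$, and matching the Bernstein constants --- are routine.
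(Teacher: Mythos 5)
Your proposal is correct and takes essentially the same route as the paper's proof: the same decomposition over the truncation event, the same population Pythagoras with the $\LL^2(\mu)$-projection $y^*$, the same reduction of the truncation probability via eigenvalue perturbation (Weyl), and the same matrix Bernstein bound with identical constants $B\leq m+\lambda^*$ and $\sigma^2\leq Lm\lambda^*$. The only cosmetic difference is in controlling the coefficient gap on the good event: you use empirical Pythagoras together with $\lambda_{\min}(\hat R)\geq\tau$, whereas the paper writes $\alpha_L-\alpha_*=\mathcal{U}\mathcal{D}^\dagger\mathcal{V}\,\xi_L$ and bounds $\lambda_{\max}\bigl((\mathcal{D}^\dagger)^\top\mathcal{D}^\dagger\bigr)\leq s_K^{-2}\leq (\tau L)^{-1}$ — two equivalent ways of expressing the same stability estimate.
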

\begin{rem}
 (i) A similar result for orthonormal basis functions has been derived in \cite{CDL}, which is not applicable in our context, since the 
 RAWBFST basis functions are not orthonormal with respect to the law of $X_2$. In contrast to the arguments in \cite{CDL}, our proof 
 relies on the matrix Bernstein inequality.
 Compared to standard least-squares regression estimates \cite{Gal}, an important feature of the algorithm is that for a fixed approximation architecture (i.e., a fixed function basis), the statistical error converges exponentially in the number $L$ of samples. \\[0.1cm]
(ii) If one scales the basis functions by a multiplicative constant $\gamma\neq 0$, then $m$, $\lambda_{\max}(R)$, and $\lambda_{\min}(R)$ are scaled by the factor $\gamma^2$. Hence, the error analysis in the above theorem 
is invariant against scaling of the basis functions. \\[0.1cm]
(iii) The error analysis in the above theorem is not distribution-free, but it depends on the distribution of $X$ only through bounds on the extremal eigenvalues of the matrix $R$. It is shown in \cite{KK} 
that the optimal rates for the interpolation problem with a random design, when the samples are drawn from a uniform distribution on the unit cube, are not valid for general distributions of $X$ on the unit cube. Hence, some 
dependence of the error bounds in Theorem \ref{thm:noiseless} on the distribution of $X$ cannot be avoided.
\end{rem}

The proof is prepared by several lemmas. These lemmas do not require the basis functions to be bounded, but they are merely assumed to be square-integrable with respect to the law of $X$.
The first lemma reduces the problem to a problem of estimating the SVD truncation probability.

\begin{lem}\label{lem:error_decomposition}
In the setting of Theorem \ref{thm:noiseless},
 \begin{eqnarray*}
 && E\left[\int_{\R^D} |y(x)-\hat y_L(x)|^2 \mu(dx) \right]\\ &\leq& \left(1+ \frac{\lambda^*}{\lambda_*(1-\epsilon)}\right) \inf_{\alpha\in \R^K} \int_{\R^D} |y(x)-\alpha^\top \eta(x)|^2 \mu(dx)  \\
&&+  P(\{s_K^2<L (1-\epsilon) \lambda_{*} \}) \int_{\R^D} |y(x)|^2 \mu(dx).
\end{eqnarray*}
\end{lem}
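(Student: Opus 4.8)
The plan is to split the analysis according to whether the brute-force truncation is triggered. Write $B=\{s_K^2\ge L(1-\epsilon)\lambda_*\}$ for the event on which the non-trivial coefficient vector $\alpha_L=\U\D^\dagger\V(Y^{(1)},\dots,Y^{(L)})^\top$ is used, so that $B^c=\{s_K^2<L(1-\epsilon)\lambda_*\}$ is exactly the event in the statement. On $B^c$ the algorithm returns $\hat y_L\equiv 0$, hence $\int|y(x)-\hat y_L(x)|^2\mu(dx)=\int|y(x)|^2\mu(dx)$, and since the right-hand side is deterministic this event contributes precisely $P(B^c)\int|y|^2\mu(dx)$ to the expectation. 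All the work therefore lies in bounding $E[\mathbf{1}_B\int|y-\hat y_L|^2\mu(dx)]$ by $(1+\lambda^*/(\lambda_*(1-\epsilon)))\inf_\alpha\int|y-\alpha^\top\eta|^2\mu(dx)$.

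First I would introduce the population best coefficient $\alpha^*$, which exists uniquely since $R$ is invertible and $y,\eta_k\in\LL^2(\mu)$, and set $g:=y-(\alpha^*)^\top\eta$, so that $\int|g|^2\mu(dx)=\inf_\alpha\int|y-\alpha^\top\eta|^2\mu(dx)$. The normal equations give $E[\eta(X)g(X)]=0$, i.e.\ $g$ is $\LL^2(\mu)$-orthogonal to every linear combination $\beta^\top\eta$. Applying this with $w:=\alpha^*-\alpha_L$ (which, for each realization of the sample, is a fixed vector) yields on all of $\Omega$ the Pythagorean identity
$$
\int|y-\hat y_L|^2\,\mu(dx)=\int|g|^2\,\mu(dx)+w^\top R\,w,
$$
so that it only remains to control $E[\mathbf{1}_B\,w^\top R\,w]$.

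The decisive step is a transfer between the population and the empirical norm. On $B$ the empirical Gram matrix $\hat R:=\tfrac1L A^\top A$ has smallest eigenvalue $s_K^2/L\ge(1-\epsilon)\lambda_*$, while $R\preceq\lambda^* I_K$; writing $\mu_L=\tfrac1L\sum_l\delta_{X^{(l)}}$ for the empirical measure, this gives on $B$
$$
w^\top R\,w\le\lambda^*|w|_2^2\le\frac{\lambda^*}{(1-\epsilon)\lambda_*}\,w^\top\hat R\,w=\frac{\lambda^*}{(1-\epsilon)\lambda_*}\int|w^\top\eta|^2\,\mu_L(dx),
$$
which is exactly where the constant $\lambda^*/(\lambda_*(1-\epsilon))$ originates. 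On $B$ the invertibility of $\hat R$ makes $\alpha_L$ the ordinary least-squares solution, so $\alpha_L^\top\eta$ is the $\LL^2(\mu_L)$-orthogonal projection of $y$ onto $\Span(\eta_1,\dots,\eta_K)$; since $(\alpha^*)^\top\eta$ lies in that span, a second Pythagorean identity, now in $\LL^2(\mu_L)$, yields $\int|w^\top\eta|^2\mu_L(dx)\le\int|y-(\alpha^*)^\top\eta|^2\mu_L(dx)=\int|g|^2\mu_L(dx)$. Taking expectations, using that the $X^{(l)}$ are i.i.d.\ $\sim\mu$ so $E[\int|g|^2\mu_L(dx)]=\int|g|^2\mu(dx)$, and bounding $\mathbf{1}_B\le1$, I obtain $E[\mathbf{1}_B\,w^\top R\,w]\le\frac{\lambda^*}{(1-\epsilon)\lambda_*}\int|g|^2\mu(dx)$. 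Combining this with the first Pythagorean identity and the $B^c$-contribution completes the proof.

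The main obstacle, and the reason for routing the argument through the two norms, is integrability. The tempting alternative of writing $\alpha_L-\alpha^*=\hat R^{-1}\tfrac1L\sum_l\eta(X^{(l)})g(X^{(l)})$ and estimating its second moment produces a term of order $\tfrac1L E[|\eta(X)|_2^2\,g(X)^2]$, which is smaller for large $L$ but requires finiteness of $E[|\eta(X)|_2^2\,g(X)^2]$, a fourth-moment-type condition not implied by the mere square-integrability of the $\eta_k$ assumed here. Comparing the empirical projection against the fixed competitor $(\alpha^*)^\top\eta$ through empirical-projection optimality sidesteps any such condition and delivers precisely the $L$-independent bound claimed. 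The only points needing care are that the $\LL^2(\mu)$-orthogonality of $g$ is invoked pathwise (for each realization $w$ is deterministic) and that on $B$ the invertibility of $\hat R$ simultaneously legitimizes the least-squares representation of $\alpha_L$ and the eigenvalue bound $\lambda_{\min}(\hat R)\ge(1-\epsilon)\lambda_*$.
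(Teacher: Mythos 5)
Your proposal is correct and follows essentially the same route as the paper's proof: the same split on the truncation event $\{s_K^2\ge L(1-\epsilon)\lambda_*\}$ versus its complement, the same population Pythagorean decomposition around the $\LL^2(\mu)$-projection coefficients $\alpha_*$, the same spectral inputs $\lambda_{\max}(R)\le\lambda^*$ and $s_K^2\ge L(1-\epsilon)\lambda_*$, and the same i.i.d.\ identity $E\bigl[\tfrac1L\sum_{l}g(X^{(l)})^2\bigr]=\int|g|^2\,d\mu$ at the end. The only difference is cosmetic: where the paper writes $\alpha_L-\alpha_*=\U\D^\dagger\V\,\xi_L$ explicitly and bounds the pseudoinverse norm by $s_K^{-1}$, you reach the identical intermediate estimate $w^\top R\,w\le \lambda^*\,s_K^{-2}\,|\xi_L|_2^2$ through the variational (empirical-Pythagoras) characterization of the least-squares solution combined with $\lambda_{\min}\bigl(\tfrac1L A^\top A\bigr)=s_K^2/L$.
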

\begin{proof}
 We decompose
\begin{eqnarray*}
 && E\left[\int_{\R^D} |y(x)-\hat y_L(x)|^2 \mu(dx) \right]= E\left[\int_{\R^D} |y(x)-\hat y_L(x)|^2 \mu(dx) {\bf 1}_{\{ s_K^2\geq L (1-\epsilon) \lambda_{*}\}} \right] \\
& + & E\left[\int_{\R^D} |y(x)-\hat y_L(x)|^2 \mu(dx) {\bf 1}_{\{ s_K^2< L (1-\epsilon) \lambda_{*}\}} \right]=:(I)+(II).
\end{eqnarray*}
As $\hat y_L=0$ on $\{ s_K^2< L (1-\epsilon) \lambda_{*}\}$, we obtain
$$
(II)=  P(\{s_K^2<L (1-\epsilon) \lambda_{*} \}) \int_{\R^D} |y(x)|^2 \mu(dx).
$$
We now treat term (I). We are going to show that 
\begin{eqnarray}\label{eq:hilf013}
 && E\left[\int_{\R^D} |y(x)-\hat y_L(x)|^2 \mu(dx) {\bf 1}_{\{ s_K^2\geq L (1-\epsilon) \lambda_{*}\}} \right] \nonumber \\
 &\leq&  \left(1+ \frac{\lambda^*}{\lambda_*(1-\epsilon)}\right) \inf_{\alpha\in \R^K} \int_{\R^D} |y(x)-\alpha^\top \eta(x)|^2 \mu(dx). 
\end{eqnarray}
To this end we denote by $\alpha_*^\top\eta(X)$ the orthogonal  projection of $Y$ on $\Span(\eta_k(X);\,k=1,\ldots,K)$. The full rank condition on $R$ ensures that 
$\alpha_*$ is uniquely determined. Then, by orthogonality,
\begin{eqnarray*}
 (I)&=&  E\left[\int_{\R^D} |(y(x)-\alpha_*^\top\eta(x))+(\alpha_*^\top\eta(x)-\hat y_L(x))|^2 \mu(dx) {\bf 1}_{\{ s_K^2\geq L (1-\epsilon) \lambda_{*}\}} \right]
\\ &\leq& E\left[\int_{\R^D} |(\alpha_L-\alpha_*)^\top\eta(x)|^2 \mu(dx) {\bf 1}_{\{ s_K^2\geq L (1-\epsilon) \lambda_{*}\}} \right]\\ &&+ \inf_{\alpha\in \R^K} \int_{\R^D} |y(x)-\alpha^\top \eta(x)|^2 \mu(dx)
\end{eqnarray*}
 It remains to show that
\begin{eqnarray}\label{eq:hilf001}
 &&  E\left[\int_{\R^D} |(\alpha_L-\alpha_*)^\top\eta(x)|^2 \mu(dx) {\bf 1}_{\{ s_K^2\geq L (1-\epsilon) \lambda_{*}\}} \right] \nonumber \\
&\leq &  \frac{\lambda^*}{\lambda_*(1-\epsilon)} \inf_{\alpha\in \R^K} \int_{\R^D} |y(x)-\alpha^\top \eta(x)|^2 \mu(dx)
\end{eqnarray}
On the set $\{ s_K^2\geq L (1-\epsilon) \lambda_{*}\}$, the empirical regression matrix $A$ has rank $K$. Hence, $\alpha_*$ is the unique solution $z$ to the linear system
$$
Az=(\alpha_*^\top\eta(X^{(l)}))_{l=1,\ldots,L}.
$$ 
As $\U\D^\dagger \V$ is the pseudoinverse of $A$, we obtain,
$$
\alpha_L-\alpha_*= \U\D^\dagger \V\,\xi_L,\quad \xi_L:=(Y^{(l)}-\alpha_*^\top\eta(X^{(l)}))_{l=1,\ldots,L}.
$$ 
Thus, on the set $\{ s_K^2\geq L (1-\epsilon) \lambda_{*}\}$,
\begin{eqnarray}\label{eq:hilf014}
 && \int_{\R^D} |(\alpha_L-\alpha_*)^\top\eta(x)|^2 \mu(dx)= (\alpha_L-\alpha_*)^\top \left( \int_{\R^D} \eta(x)\eta(x)^\top\mu(dx)\right) (\alpha_L-\alpha_*) \nonumber \\
&=&  \xi_L^\top \V^\top (\D^\dagger)^\top  \U^\top R \U\D^\dagger \V\,\xi_L \leq \lambda_{\max}(R) \, \xi_L^\top \V^\top (\D^\dagger)^\top \D^\dagger \V\,\xi_L  \nonumber\\ &\leq& 
\lambda_{\max}(R) \lambda_{\max}((\D^\dagger)^\top \D^\dagger) \, \xi_L^\top\xi_L \leq \frac{\lambda^*}{s_K^{2}} \sum_{l=1}^L  (Y^{(l)}-\alpha_*^\top\eta(X^{(l)}))^2,
\end{eqnarray}
because $(\D^\dagger)^\top \D^\dagger=diag(s_1^{-2},\ldots,s_K^{-2},0,\ldots,0)$. Hence,
\begin{eqnarray*}
  && E\left[\int_{\R^D} |(\alpha_L-\alpha_*)^\top\eta(x)|^2 \mu(dx) {\bf 1}_{\{ s_K^2\geq L (1-\epsilon) \lambda_{*}\}} \right]
\\ &\leq &  \frac{\lambda^*}{\lambda_*(1-\epsilon)} E\left[\frac{1}{L} \sum_{l=1}^L  (Y^{(l)}-\alpha_*^\top\eta(X^{(l)}))^2\right]\\ &=&\frac{\lambda^*}{\lambda_*(1-\epsilon)} \inf_{\alpha\in \R^K} \int_{\R^D} |y(x)-\alpha^\top \eta(x)|^2 \mu(dx).
\end{eqnarray*}
Thus, \eqref{eq:hilf001} holds and the proof of the lemma is complete.
\end{proof}

\begin{rem}
 One can straightforwardly modify the estimate in \eqref{eq:hilf014} in order to show that, thanks to the SVD truncation, 
 $$
 |\alpha_L|_2^2\leq  \frac{\| y\|_\infty}{\lambda_*(1-\epsilon)} .
 $$
 Hence, the SVD truncation implies a control on the Euclidean norm of the coefficient vector $\alpha_L$ resulting from the empirical regression. A related approach can be found in \cite{CO}, where the sample is rejected, if 
 the Euclidean norm of the coefficient vector exceeds some given threshold. The main advantage of our approach is that we can apply concentration inequalities for random matrices to estimate the probability that 
 the SVD truncation takes place, see the proof of Theorem \ref{thm:noiseless} below. In contrast, the probability that a sample is rejected is only discussed heuristically in \cite{CO}.
\end{rem}

\begin{lem}\label{lem:ev_vs_spectral_norm}
 $$
P(\{s_K^2<L (1-\epsilon) \lambda_{*} \}) \leq P(\{\|\frac{1}{L}(A^\top A)-R \|_2>\epsilon \lambda_* \}).
$$
\end{lem}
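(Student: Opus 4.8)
The plan is to translate the event about the smallest singular value $s_K$ of $A$ into an event about the smallest eigenvalue of the empirical second-moment matrix, and then to exploit the fact that the map $M\mapsto \lambda_{\min}(M)$ is $1$-Lipschitz with respect to the spectral norm on the space of symmetric matrices. First I would record that, since $A$ is the $L\times K$ matrix with entries $\eta_k(X^{(l)})$, its squared singular values are exactly the eigenvalues of the symmetric $K\times K$ matrix $A^\top A=\sum_{l=1}^L \eta(X^{(l)})\eta(X^{(l)})^\top$. In particular $s_K^2=\lambda_{\min}(A^\top A)$, so that $\frac{1}{L}s_K^2=\lambda_{\min}(\frac{1}{L}A^\top A)$, where $\frac{1}{L}A^\top A$ is precisely the empirical counterpart of $R=E[\eta(X)\eta(X)^\top]$.

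Next I would invoke Weyl's perturbation inequality: for symmetric matrices $M,N\in\R^{K\times K}$ one has $|\lambda_{\min}(M)-\lambda_{\min}(N)|\leq \|M-N\|_2$. Applying this with $M=\frac{1}{L}A^\top A$ and $N=R$, and using the assumed lower bound $\lambda_{\min}(R)\geq \lambda_*$, shows that on the event $\{s_K^2<L(1-\epsilon)\lambda_*\}$ one has $\lambda_{\min}(\frac{1}{L}A^\top A)<(1-\epsilon)\lambda_*$ and therefore
$$
\left\|\frac{1}{L}A^\top A-R\right\|_2\geq \lambda_{\min}(R)-\lambda_{\min}\left(\frac{1}{L}A^\top A\right)>\lambda_*-(1-\epsilon)\lambda_*=\epsilon\lambda_*.
$$
This is a deterministic, pathwise implication, so it yields the set inclusion $\{s_K^2<L(1-\epsilon)\lambda_*\}\subseteq\{\|\frac{1}{L}(A^\top A)-R\|_2>\epsilon\lambda_*\}$, and the claimed inequality of probabilities follows by monotonicity of $P$.

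I do not expect any genuine obstacle here. The only points requiring a little care are the bookkeeping of the shapes of $A$ and $A^\top$ (so that the squared singular values correspond to the eigenvalues of $A^\top A$ rather than of $AA^\top$) and the recollection that $\lambda_{\min}$ is $1$-Lipschitz in the spectral norm, which is the content of Weyl's inequality. The reason this lemma is the natural bridge in the overall argument is that its right-hand event is exactly of the form controlled by a matrix concentration (Bernstein-type) inequality applied to the sum of i.i.d.\ rank-one matrices $\eta(X^{(l)})\eta(X^{(l)})^\top$, and it is through this route that the exponential tail appearing in Theorem \ref{thm:noiseless} will ultimately be produced.
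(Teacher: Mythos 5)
Your proposal is correct and follows essentially the same route as the paper's proof: both identify $s_K^2=\lambda_{\min}(A^\top A)$, use $\lambda_{\min}(R)\geq\lambda_*$, and conclude via the perturbation bound $|\lambda_{\min}(\Sigma_1)-\lambda_{\min}(\Sigma_2)|\leq\|\Sigma_1-\Sigma_2\|_2$ (Weyl's inequality, cited in the paper as Corollary 7.3.8 of Horn and Johnson). The only cosmetic difference is that you phrase the argument as a pathwise event inclusion, while the paper chains the corresponding probability inequalities; the mathematical content is identical.
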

\begin{proof}
 Note that $s_K^2$ is the smallest eigenvalue of $A^\top A$, and, then, $s_K^2/L=\lambda_{\min}( \frac{1}{L}(A^\top A))$. Hence,
\begin{eqnarray*}
P(\{s_K^2<L (1-\epsilon) \lambda_{*} \})&\leq &  P(\{\lambda_{\min}(\frac{1}{L}(A^\top A))-\lambda_{\min}(R) <-\epsilon \lambda_{*} \}) \\ &\leq &
P(\{|\lambda_{\min}(\frac{1}{L}(A^\top A))-\lambda_{\min}(R)| >\epsilon \lambda_{*}\}. 
\end{eqnarray*}
Thus, the lemma follows from the fact that for positive semi-definite matrices $\Sigma_1,\Sigma_2$,
$$
|\lambda_{\min}(\Sigma_1)-\lambda_{\min}(\Sigma_2)|\leq \|\Sigma_1-\Sigma_2 \|_2,
$$
see e.g. \cite{HJ}, Corollary 7.3.8.
\end{proof}

The expression on the right-hand side in Lemma \ref{lem:ev_vs_spectral_norm} can be estimated by a matrix Bernstein inequality. The following version is due to 
Tropp \cite{Tropp}:
\begin{thm}[\cite{Tropp}, Theorem 1.6]\label{thm:matrix_Bernstein}
Consider a finite sequence $(Z_l)$
of independent, random matrices of size $K_1\times K_2$ . Assume that each random
matrix satisfies
$$
E[Z_l]=0,\quad  \| Z_l\|_2\leq B,\; P\textnormal{-almost surely,}
$$
for some constant $B\geq 0$.
Let
$$
\sigma^2:=\max\left\{ \left\|\sum_l E[Z_lZ_l^\top]\right\|_2,\;  \left\|\sum_l E[Z_l^\top Z_l]\right\|_2  \right\} 
$$
Then, for every $t\geq 0$,
$$
P(\{ \|\sum_l Z_l\|_2\geq t \} )\leq (K_1+K_2)\exp\left\{-\frac{t^2/2}{\sigma^2+Bt/3}\right\}.
$$
\end{thm}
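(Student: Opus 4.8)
The plan is to reduce the rectangular case to a Hermitian (self-adjoint) one and then run the matrix version of the Chernoff--Bernstein Laplace-transform argument. First I would pass to Hermitian dilations: for each rectangular $Z_l$ set
$$
\mathcal{S}(Z_l)=\begin{pmatrix} 0 & Z_l \\ Z_l^\top & 0\end{pmatrix}\in\R^{(K_1+K_2)\times(K_1+K_2)}.
$$
These matrices are symmetric, satisfy $E[\mathcal{S}(Z_l)]=0$ and $\|\mathcal{S}(Z_l)\|_2=\|Z_l\|_2\le B$, and since the dilation is linear one checks $\lambda_{\max}(\sum_l \mathcal{S}(Z_l))=\|\sum_l Z_l\|_2$. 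Moreover $\mathcal{S}(Z_l)^2=\mathrm{diag}(Z_lZ_l^\top,\,Z_l^\top Z_l)$, so the block-diagonal structure gives $\|\sum_l E[\mathcal{S}(Z_l)^2]\|_2=\max\{\|\sum_l E[Z_lZ_l^\top]\|_2,\,\|\sum_l E[Z_l^\top Z_l]\|_2\}=\sigma^2$, exactly the quantity in the statement. Thus it suffices to prove, for a finite family of independent zero-mean Hermitian matrices $Y_l$ of size $d:=K_1+K_2$ with $\|Y_l\|_2\le B$ and $\sigma^2=\|\sum_l E[Y_l^2]\|_2$, that $P(\lambda_{\max}(\sum_l Y_l)\ge t)\le d\exp(-\tfrac{t^2/2}{\sigma^2+Bt/3})$.

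Next I would set up the matrix Laplace transform bound. For $\theta>0$, since $\lambda_{\max}(A)\ge t$ forces $\mathrm{tr}\exp(\theta A)\ge e^{\theta t}$, Markov's inequality gives
$$
P\Big(\lambda_{\max}\big(\textstyle\sum_l Y_l\big)\ge t\Big)\le e^{-\theta t}\,E\big[\mathrm{tr}\exp(\theta\textstyle\sum_l Y_l)\big].
$$
The crucial step, and the one I expect to be the main obstacle, is to control the right-hand side even though matrix exponentials do not factor across sums when the $Y_l$ fail to commute. This is resolved by the subadditivity of the matrix cumulant generating function: invoking Lieb's concavity theorem (the concavity of $A\mapsto \mathrm{tr}\exp(H+\log A)$ on positive-definite $A$) together with Jensen's inequality and independence of the $Y_l$, one obtains
$$
E\big[\mathrm{tr}\exp(\theta\textstyle\sum_l Y_l)\big]\le \mathrm{tr}\exp\Big(\textstyle\sum_l \log E[e^{\theta Y_l}]\Big).
$$
Establishing this inequality is the heart of the argument; the purely scalar Chernoff bound would only give the weaker Golden--Thompson/Ahlswede--Winter version, so it is precisely Lieb's theorem that yields the additive-variance exponent.

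Finally I would bound each summand and optimize. Using the scalar inequality $e^x\le 1+x+g\,x^2$, valid on $[-B,B]$ with $g=(e^{\theta B}-\theta B-1)/B^2$, transferring it through the spectral calculus to $Y_l$ (whose eigenvalues lie in $[-B,B]$), and using $E[Y_l]=0$ together with $I+M\preceq e^{M}$, I get $\log E[e^{\theta Y_l}]\preceq g\,E[Y_l^2]$ in the semidefinite order. Summing and invoking monotonicity of the trace exponential under $\preceq$ yields
$$
\mathrm{tr}\exp\Big(\textstyle\sum_l \log E[e^{\theta Y_l}]\Big)\le d\exp\big(g\,\sigma^2\big),
$$
so that $P(\lambda_{\max}(\sum_l Y_l)\ge t)\le d\exp(-\theta t+g(\theta)\sigma^2)$. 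I would then use $g(\theta)\le \tfrac{\theta^2/2}{1-\theta B/3}$ for $0<\theta<3/B$ and choose $\theta=t/(\sigma^2+Bt/3)$; a short computation shows this choice satisfies $\theta<3/B$ and produces the exponent $-\tfrac{t^2/2}{\sigma^2+Bt/3}$, giving, with $d=K_1+K_2$, exactly the claimed bound.
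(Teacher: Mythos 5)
Your proposal is correct, and it is a faithful reconstruction of the proof in the cited source: the paper itself imports this statement verbatim from Tropp (2012, Theorem 1.6) without proof, and Tropp's argument is exactly your route --- Hermitian dilation to reduce to the symmetric case, the Laplace-transform bound made additive via Lieb's concavity theorem (rather than the weaker Golden--Thompson/Ahlswede--Winter iteration), the semidefinite bound $\log E[e^{\theta Y_l}]\preceq g(\theta)E[Y_l^2]$ with $g(\theta)=(e^{\theta B}-\theta B-1)/B^2\leq \frac{\theta^2/2}{1-\theta B/3}$, and the optimal choice $\theta=t/(\sigma^2+Bt/3)$, which indeed yields the exponent $-\frac{t^2/2}{\sigma^2+Bt/3}$ and the dimensional factor $K_1+K_2$. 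All steps check out, including the identification $\|\sum_l E[\mathcal{S}(Z_l)^2]\|_2=\sigma^2$ from the block-diagonal structure of the squared dilation.
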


\begin{proof}[Proof of Theorem \ref{thm:noiseless}]
 In view of Lemmas \ref{lem:error_decomposition} and \ref{lem:ev_vs_spectral_norm}, it suffices to show that
\begin{eqnarray}\label{eq:hilf002}
P(\{\|\frac{1}{L}(A^\top A)-R \|_2>\epsilon \lambda_* \})\leq  2K\exp\left\{-\frac{3 \epsilon^2 L}{6 m\lambda^*/\lambda_*^2+ 2\epsilon(m/\lambda_*+\lambda^*/\lambda_*)} \right\}.
\end{eqnarray}
We consider the random $K\times K$ symmetric matrices
$$
Z_l=\eta(X^{(l)})\eta(X^{(l)})^\top-R,\quad l=1,\ldots,L.
$$
Then,
$$
P(\{\|\frac{1}{L}(A^\top A)-R \|_2>\epsilon \lambda_* \})=P(\{\|\sum_{l=1}^L Z_l\|_2>L\epsilon \lambda_* \}).
$$
As obviously $E[Z_l]=0$, the matrix Bernstein inequality (Theorem \ref{thm:matrix_Bernstein}) yields
\begin{eqnarray}\label{eq:hilf003}
 P(\{\|\frac{1}{L}(A^\top A)-R \|_2>\epsilon \lambda_* \})\leq  2K\exp\left\{-\frac{3 \epsilon^2 L}{6 \sigma^2/(\lambda_*^2L)+ 2 B  \epsilon / \lambda_*} \right\},
\end{eqnarray}
where, by symmetry of $Z_l$,
\begin{eqnarray*}
 \sigma^2&=& \left\|\sum_{l=1}^L E[Z_l^2]\right\|_2,\quad B= \esssup_{\omega\in \Omega} \| Z_l(\omega)\|_2.
\end{eqnarray*}
Recall that for a positive semi-definite matrix $\Sigma$,
$$
\|\Sigma\|_2=\max_{u\in \R^K;\; |u|_2=1} u^\top \Sigma u =\lambda_{\max}(\Sigma).
$$
Hence, by the triangle inequality, 
\begin{eqnarray*}
 \| Z_l\|_2&\leq& \max_{u\in \R^K;\; |u|_2=1} (u^\top\eta(X^{(l)}) \eta(X^{(l)})^\top u) + \lambda_{\max}(R) =| \eta(X^{(l)})|_2^2+ \lambda_{\max}(R) \\ &\leq& m+\lambda^*.
\end{eqnarray*}
Thus,
$$
B\leq (m+\lambda^*).
$$
Now, note that the matrix $Z_l^2=Z_lZ_l^\top$ is ($\omega$-wise) positive semidefinite, and then so is the matrix $E[Z_l^2]$. As
$$
E[Z_l^2]=E[\eta(X^{(l)})\eta(X^{(l)})^\top\eta(X^{(l)})\eta(X^{(l)})^\top]-RR\leq E[\eta(X^{(l)})\eta(X^{(l)})^\top\eta(X^{(l)})\eta(X^{(l)})^\top],
$$
we obtain
\begin{eqnarray*}
 \sigma^2&\leq & L \lambda_{\max}(E[\eta(X^{(l)})\eta(X^{(l)})^\top\eta(X^{(l)})\eta(X^{(l)})^\top]) \leq Lm \lambda_{\max}(R)\leq Lm\lambda^*.
\end{eqnarray*}
Combining the estimates for $B$ and $\sigma^2$ with \eqref{eq:hilf003} we arrive at \eqref{eq:hilf002}, and the proof is complete.
\end{proof}

\subsection{Piecewise polynomial estimates}

We now consider the case of piecewise polynomial estimates, when the random variable $X$ is supported on the unit cube $[0,1]^D$. More precisely, we assume 
that $X$ has a density with respect to the Lebesgue measure in $\R^D$ of the form
$$
f(x){\bf 1}_{[0,1]^D}
$$
such that $f$ is continuous and strictly positive on the unit cube. 

Our aim in this section is twofold. First, we show that our algorithm can achieve (up to a log-term) optimal rates of convergence for the interpolation problem with fixed random design. 
Second, we illustrate how to apply Theorem \ref{thm:noiseless} in a simple setting before turning to the more involved proof of Theorem \ref{thm:main}. Basically, all we need to establish as inputs 
for the theorem are suitable bounds on eigenvalues and on the supremum norm of the basis functions.  

For fixed $N\in \N$, we apply a regular cubic partition 
$$
C_{{\bf i}}=\prod_{d=1}^D \left(\frac{i_d}{N},\frac{i_d+1}{N}\right],\quad {\bf i}=(i_1,\ldots i_D)\in \{0,\ldots, N-1\}^D,
$$
of the unit cube. We still denote by $\mathcal{L}_q:\R\rightarrow \R$ the Legendre polynomial of degree $q$, which is normalized such that $\mathcal{L}_q(1)=1$, see Step 2 of Algorithm \ref{alg:1}.
Given 
a multi-index $\jj\in \N_0^D$, we consider on each cube $C_{\bf i}$ the polynomials
$$
p_{\jj, \ii}(x)= N^{D/2} \prod_{d=1}^D \sqrt{2j_d+1} \, \mathcal{L}_{j_d}\left(2(Nx_d-i_d)-1\right).
$$
We now fix the maximal degree $Q\in \N_0$, and define the basis functions to be 
$$
\eta_{{\bf i, \bf j}}=p_{{\bf i, \bf j}} {\bf 1}_{C_{{\bf i}}},\quad {\bf i} \in \{0,\ldots, N-1\}^D, \,\jj\in \N_0^D,\, \sum_d j_d\leq Q.
$$
To simplify the notation we also write $\eta_k$, $k=1,\ldots K= N^D  {{D+Q}\choose{D}}$, for any fixed ordering of these basis functions.  We define
$$
f_*:=\inf_{x\in [0,1]^D} f(x),\quad f^*:= \sup_{x\in [0,1]^D} f(x),\quad \varpi(h):= \sup_{x,z \in [0,1]^D,\, |x-z|_\infty \leq h} |f(x)-f(z)|.
$$ 
Hence,  $\varpi$ is the modulus of continuity of the density $f$ with respect to the maximum norm. The continuity assumption on $f$ ensures that $f_*>0$, $f^*<\infty$, and $\varpi(h)\rightarrow 0$, as $h$ tends to zero. 

If we run Algorithm \ref{alg:2} in the above setting with $\tau=(1-\epsilon)f_*/2$ for an arbitrary $0<\epsilon<1$, we obtain the following convergence result.
\begin{thm}\label{thm:polynomials2}
Assume $Q=0$ or $Q\in \N$ such that 
\begin{equation}\label{eq:GC}
2{{D+Q}\choose{D}} e^{2Q} \leq \frac{f_*}{\varpi(1/N)}. 
\end{equation}
 Suppose $y:[0,1]^D\rightarrow \R$ is $(Q+\gamma,C)$-smooth, i.e. $Q$-times continuously differentiable and the partial derivatives of order $Q$ are $\gamma$-H\"older-continuous for some $0<\gamma\leq 1$ with H\"older constant $C$
(w.r.t the Euclidean norm). Let
\begin{equation}\label{eq:L1}
L \geq  \frac{N^D{{D+Q}\choose{D}}e^{2Q}(36\frac{f^*}{f_*}+4 \epsilon)+6\epsilon f^*}{3\epsilon^2 f_*} \log(c_0 N^{D+2(Q+\gamma)})
\end{equation}
for some constant $c_0>0$. Then,
\begin{eqnarray*}
 && E\left[\int_{[0,1]^D} |y(x)-\hat y_L(x)|^2 \mu(dx) \right]\\ &\leq& 
\frac{1}{N^{2(Q+\gamma)}}\left( \left(1+ \frac{3f^*}{f_*(1-\epsilon)}\right)\frac{ D^{2Q+\gamma}}{(Q!)^2} C^2+ \frac{2{{D+Q}\choose{D}}}{c_0} \int_{[0,1]^D} |y(x)|^2 \mu(dx)  \right).
\end{eqnarray*}
\end{thm}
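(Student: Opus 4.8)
The plan is to verify that the piecewise-polynomial design of this subsection supplies the three inputs required by Theorem \ref{thm:noiseless} — two-sided eigenvalue bounds $\lambda_*,\lambda^*$ for $R$, a bound $m$ on $\sup_x\sum_k\eta_k(x)^2$, and a projection-error estimate — and then to read off the claim with $L$ chosen as in \eqref{eq:L1}. The key preliminary observation is that the local polynomials $p_{\jj,\ii}$ are orthonormal on $C_\ii$ with respect to \emph{Lebesgue} measure: after the change of variables $u_d=2(Nx_d-i_d)-1$, the relation $\int_{-1}^1\mathcal{L}_q\mathcal{L}_{q'}=\tfrac{2}{2q+1}\delta_{qq'}$ together with the factors $N^{D/2}$ and $\sqrt{2j_d+1}$ yields $\int_{C_\ii}p_{\jj,\ii}p_{\jj',\ii}\,dx=\delta_{\jj\jj'}$.

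First I would pin down the eigenvalues of $R$. Since basis functions on distinct cubes have disjoint supports, $R$ is block diagonal, with one block $R_\ii$ per cube. Writing $R_\ii=f(a_\ii)\,\mathbb{I}+E_\ii$ for $a_\ii$ the center of $C_\ii$, the orthonormality above gives $(E_\ii)_{\jj\jj'}=\int_{C_\ii}p_{\jj,\ii}p_{\jj',\ii}(f-f(a_\ii))\,dx$, so for a unit vector $\alpha$ the quadratic form equals $\int_{C_\ii}(\sum_\jj\alpha_\jj p_{\jj,\ii})^2(f-f(a_\ii))\,dx$. Bounding the polynomial factor pointwise by $\sum_\jj p_{\jj,\ii}^2$ and $|f-f(a_\ii)|$ by $\varpi(1/N)$ gives $\|E_\ii\|_2\le\binom{D+Q}{D}e^{2Q}\varpi(1/N)$, so the Gram condition \eqref{eq:GC} forces $\|E_\ii\|_2\le f_*/2$. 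A Weyl perturbation estimate then produces $\lambda_{\min}(R)\ge f_*-f_*/2=f_*/2=:\lambda_*$ and $\lambda_{\max}(R)\le f^*+f_*/2\le 3f^*/2=:\lambda^*$, consistent with the threshold $\tau=(1-\epsilon)f_*/2=(1-\epsilon)\lambda_*$.

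Second I would bound $m$ and the projection error. Using $|\mathcal{L}_q(u)|\le 1$ on $[-1,1]$ and $2j+1\le e^{2j}$, the local sum of squares obeys $\sum_\jj p_{\jj,\ii}(x)^2\le N^D\sum_{|\jj|_1\le Q}\prod_d(2j_d+1)\le N^D\binom{D+Q}{D}e^{2Q}$, whence $m\le N^D\binom{D+Q}{D}e^{2Q}$. For the projection error I would take the piecewise degree-$Q$ Taylor polynomial $T_\ii$ of $y$ about each $a_\ii$, which lies in $\Span(\eta_k)$; the integral form of the remainder and the $(Q+\gamma,C)$-smoothness give $|y(x)-T_\ii(x)|\le\tfrac{C}{Q!}|x-a_\ii|_1^{Q}|x-a_\ii|_2^{\gamma}\le\tfrac{C\,D^{Q/2}}{Q!}|x-a_\ii|_2^{Q+\gamma}$. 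Since $|x-a_\ii|_2\le\sqrt D/N$ on $C_\ii$ and $\sum_\ii\mu(C_\ii)=1$, summing over cubes yields $\inf_{\alpha}\int|y-\alpha^\top\eta|^2\,d\mu\le D^{2Q+\gamma}C^2/((Q!)^2N^{2(Q+\gamma)})$.

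Finally I would substitute $\lambda_*=f_*/2$, $\lambda^*=3f^*/2$, the bound on $m$, and the projection bound into Theorem \ref{thm:noiseless}. The projection coefficient becomes $1+\lambda^*/(\lambda_*(1-\epsilon))=1+3f^*/(f_*(1-\epsilon))$, giving the first term. For the statistical term the exponent of Theorem \ref{thm:noiseless} collapses, after inserting these values, to $3\epsilon^2 L f_*/\big(m(36 f^*/f_*+4\epsilon)+6\epsilon f^*\big)$, and \eqref{eq:L1} is exactly the requirement that this be at least $\log(c_0 N^{D+2(Q+\gamma)})$; hence $2K\exp\{-(\cdots)\}=2N^D\binom{D+Q}{D}\exp\{-(\cdots)\}\le 2\binom{D+Q}{D}/(c_0N^{2(Q+\gamma)})$, the second term. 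The main obstacle is the eigenvalue step: one must exploit the exact Lebesgue-orthonormality of the local Legendre basis and the modulus-of-continuity control, and it is precisely here that \eqref{eq:GC} is indispensable, since it guarantees the density is close enough to locally constant that the brute-force SVD truncation almost never triggers.
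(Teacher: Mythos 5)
Your proposal is correct and follows essentially the same route as the paper's proof: exploiting the block-diagonal structure of $R$ over the disjoint cubes, using the Lebesgue-orthonormality of the local Legendre polynomials together with the modulus of continuity $\varpi(1/N)$ and condition \eqref{eq:GC} to obtain $\lambda_*=f_*/2$, $\lambda^*=3f^*/2$ and $m\le N^D\binom{D+Q}{D}e^{2Q}$, bounding the projection error by a piecewise degree-$Q$ Taylor expansion with H\"older remainder, and then substituting these quantities and the sample-size condition \eqref{eq:L1} into Theorem \ref{thm:noiseless}. The only cosmetic differences are that the paper controls the block eigenvalues via Gershgorin's theorem where you use a quadratic-form/Weyl perturbation of $f(a_{{\bf i}})\mathbb{I}$ (both yield the identical bound $\binom{D+Q}{D}e^{2Q}\varpi(1/N)\le f_*/2$), and that the paper treats $Q=0$ — where \eqref{eq:GC} is not assumed — by the one-line observation that $R$ is then diagonal with entries $N^D\mu(C_{{\bf i}})\in[f_*,f^*]$, a trivial fix that your unified perturbation argument technically requires.
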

\begin{rem}\label{rem:poly}
 (i) Let the number of samples $L$ tend to infinity, and choose $N_L$ as the largest integer such that \eqref{eq:L1} is satisfied. Then, for every fixed $Q$, \eqref{eq:GC} is satisfied for sufficiently large $L$.  
Hence, the previous theorem states $\LL^2$-convergence of the order $L^{-(Q+\gamma)/D}$ up to a logarithmic factor in the number of samples. This rate matches (up to the log-factor) the $\LL^1$-minimax lower bound 
in \cite{KK} for the uniform distribution in the class of $(Q+\gamma,C)$-smooth functions and can, thus, be considered as optimal for the interpolation problem with fixed  random design. \\[0.1cm]
(ii) No attempt has been made to optimize the constants, but the focus of Theorem \ref{thm:polynomials2} is to derive the optimal rate of convergence. In particular the constant in front of the log-factor in \eqref{eq:L1}
is very conservative. As this constant determines the convergence rate of the statistical error, the approximation error due to the basis choice actually is the leading error term in the setting of Theorem \ref{thm:polynomials2}. \\[0.1cm]
\end{rem}

The proof of Theorem \ref{thm:polynomials2}  relies on the following bounds of the supremum norm of the basis functions and of the eigenvalues.

\begin{lem}
 Let $R=E[\eta(X)\eta^\top(X)]$ and $m=\sup_{x\in [0,1]^D} |\eta(x)|_2^2$. Assume $Q=0$ or that $Q\in \N$ satisfies \eqref{eq:GC}.
Then,
$$
\lambda_{\min}(R)\geq f_*/2,\quad \lambda_{\max}(R)\leq \frac{3}{2} f^*,\quad m\leq N^D e^{2Q}  {{D+Q}\choose{D}}.
$$
\end{lem}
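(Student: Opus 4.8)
The plan is to treat the three bounds separately, beginning with the elementary sup-norm estimate and then exploiting the block structure of $R$ together with the orthonormality of tensorized Legendre polynomials to handle the eigenvalues.

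For the bound on $m$, I first note that the cubes $C_{\bf i}$ partition $[0,1]^D$, so at any point $x$ only the $\binom{D+Q}{D}$ basis functions attached to the cube containing $x$ are nonzero, whence $m=\sup_{\bf i}\sup_{x\in C_{\bf i}}\sum_{|{\bf j}|_1\le Q}p_{{\bf i},{\bf j}}(x)^2$. After the affine change of variables $u_d=2(Nx_d-i_d)-1$, which maps $C_{\bf i}$ onto $(-1,1]^D$, one has $p_{{\bf i},{\bf j}}(x)^2=N^D\prod_d(2j_d+1)\mathcal{L}_{j_d}(u_d)^2$. Since the Legendre polynomials normalized by $\mathcal{L}_q(1)=1$ satisfy $|\mathcal{L}_q(u)|\le 1$ on $[-1,1]$, each factor is at most $2j_d+1$, and using $1+2j_d\le e^{2j_d}$ I get $\prod_d(2j_d+1)\le e^{2|{\bf j}|_1}\le e^{2Q}$. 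Summing over the $\binom{D+Q}{D}$ admissible multi-indices ${\bf j}$ yields $m\le N^D e^{2Q}\binom{D+Q}{D}$, as claimed.

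For the eigenvalues, the key structural observation is that basis functions supported on distinct cubes are $\LL^2(\mu)$-orthogonal, so $R$ is block diagonal with one $\binom{D+Q}{D}\times\binom{D+Q}{D}$ block $R_{\bf i}$ per cube, and $\lambda_{\min}(R)=\min_{\bf i}\lambda_{\min}(R_{\bf i})$, $\lambda_{\max}(R)=\max_{\bf i}\lambda_{\max}(R_{\bf i})$. Applying the same change of variables and writing $\phi_{\bf j}(u)=\prod_d\sqrt{2j_d+1}\,\mathcal{L}_{j_d}(u_d)$, the entries of $R_{\bf i}$ become $\int_{(-1,1]^D}\phi_{\bf j}\phi_{{\bf j}'}\,f(x(u))\,2^{-D}\,du$. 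The family $(\phi_{\bf j})$ is orthonormal with respect to the uniform probability measure $2^{-D}du$ on $(-1,1]^D$, since $\int_{-1}^1(\sqrt{2q+1}\,\mathcal{L}_q)^2\,\tfrac{du}{2}=1$ and Legendre polynomials of distinct degrees are orthogonal. Freezing $f$ at the cube center $a_{\bf i}$, I then decompose $R_{\bf i}=f(a_{\bf i})\,I+E_{\bf i}$ with $(E_{\bf i})_{{\bf j},{\bf j}'}=\int\phi_{\bf j}\phi_{{\bf j}'}\,(f(x(u))-f(a_{\bf i}))\,2^{-D}\,du$.

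It then remains to bound the perturbation $E_{\bf i}$ in spectral norm. Since any two points of $C_{\bf i}$ differ by at most $1/N$ in the maximum norm, $|f(x(u))-f(a_{\bf i})|\le\varpi(1/N)$ throughout, and for any unit vector $v=(v_{\bf j})$ the quadratic form obeys $|v^\top E_{\bf i} v|=|\int(\sum_{\bf j}v_{\bf j}\phi_{\bf j})^2(f(x(u))-f(a_{\bf i}))\,2^{-D}du|\le\varpi(1/N)\int(\sum_{\bf j}v_{\bf j}\phi_{\bf j})^2\,2^{-D}du=\varpi(1/N)$ by Parseval; as $E_{\bf i}$ is symmetric this gives $\|E_{\bf i}\|_2\le\varpi(1/N)$. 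Because adding $f(a_{\bf i})I$ merely shifts the spectrum of $E_{\bf i}$ by $f(a_{\bf i})\in[f_*,f^*]$, and because the growth condition \eqref{eq:GC} forces $\varpi(1/N)\le f_*/(2\binom{D+Q}{D}e^{2Q})\le f_*/2$, I obtain $\lambda_{\min}(R_{\bf i})\ge f_*-f_*/2=f_*/2$ and $\lambda_{\max}(R_{\bf i})\le f^*+f_*/2\le\tfrac{3}{2}f^*$ uniformly in ${\bf i}$; the case $Q=0$ is immediate since then each $R_{\bf i}$ is the scalar cube-average of $f$, which lies in $[f_*,f^*]$. I expect the main obstacle to be the bookkeeping around the change of variables and the verification of the orthonormality normalization of the tensorized Legendre basis; once the decomposition $R_{\bf i}=f(a_{\bf i})I+E_{\bf i}$ is in place, the Parseval bound on $\|E_{\bf i}\|_2$ makes the perturbation step essentially automatic.
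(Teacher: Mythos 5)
Your proposal is correct, and the bound on $m$ and the reduction to the block-diagonal structure of $R$ coincide with the paper's proof. Where you genuinely diverge is in the perturbation step for the eigenvalues of each block $R_{\bf i}$. The paper factors out $f(x_{\bf i})$ multiplicatively, writes each entry as $f(x_{\bf i})\bigl({\bf 1}_{{\bf j}={\bf j}'}+(II)_{{\bf j},{\bf j}'}\bigr)$ with the entrywise bound $|(II)_{{\bf j},{\bf j}'}|\leq \varpi(1/N)e^{2Q}/f_*$, and then invokes Gershgorin's theorem, so the off-diagonal row sums produce the factor ${{D+Q}\choose{D}}e^{2Q}$ and the full strength of condition \eqref{eq:GC} is needed to make that row sum at most $1/2$. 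You instead use the additive decomposition $R_{\bf i}=f(a_{\bf i})I+E_{\bf i}$ and bound $\|E_{\bf i}\|_2$ directly through the quadratic form: by orthonormality (Parseval) of the tensorized Legendre system under the uniform probability measure on $(-1,1]^D$, you get $\|E_{\bf i}\|_2\leq\varpi(1/N)$ with no dimensional or degree-dependent factor at all. This is sharper: your argument yields $\lambda_{\min}(R_{\bf i})\geq f_*-\varpi(1/N)$ and $\lambda_{\max}(R_{\bf i})\leq f^*+\varpi(1/N)$, and hence only needs the much weaker requirement $\varpi(1/N)\leq f_*/2$, which \eqref{eq:GC} trivially implies since $2{{D+Q}\choose{D}}e^{2Q}\geq 2$. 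What the paper's Gershgorin route buys is merely that it reuses the sup-norm bound \eqref{eq:hilf005} already established for $m$, so no new structural fact about the basis is needed; your route exploits orthonormality one step further and would, in fact, allow the hypothesis \eqref{eq:GC} to be relaxed for this lemma (though \eqref{eq:GC} as stated costs nothing asymptotically, since both conditions hold once $N$ is large).
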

\begin{proof}
 As the Legendre polynomials are bounded by 1 on $[-1,1]$, we obtain for every ${\bf i} \in \{0,\ldots, N-1\}^D$ and every $j\in \N_0^D$ such that $\sum_d j_d\leq Q$,
\begin{eqnarray}\label{eq:hilf005}
 \sup_{x\in C_{{\bf i}}}  |p_{\bf i, \bf j}(x)|\leq  N^{D/2} \prod_{d=1}^D \sqrt{2j_d+1}  \leq N^{D/2} e^{\sum_d j_d} \leq N^{D/2} e^Q.
\end{eqnarray}
As the cubes are disjoint and there are ${{D+Q}\choose{D}}$ basis functions per cube, the bound on $m$ immediately follows.

If $Q=0$, then $R$ is a diagonal matrix with entries $N^D \mu(C_{{\bf i}})$, because $p_0=1$. Applying the bounds $N^{-D} f_* \leq \mu(C_{\bf i})\leq N^{-D} f^*$, we obtain that 
$\lambda_{\min}(R)\geq f_*$ and $\lambda_{\max}(R)\leq f^*$.  For $Q\in \N$, $R$ has (after re-ordering of the basis functions, if necessary) block diagonal form, because the cubes are disjoint. 
Hence, it suffices to bound the eigenvalues separately on each cube $C_{\bf i}$. In order to compute the entries of the block $R_{\bf i}$ of $R$, which stems from cube $C_{\bf i}$, we fix some point $x_{\bf i}\in C_{\bf i}$.
Then,
\begin{eqnarray*}
\nonumber && \int_{C_{\bf i}}  p_{\bf i, \bf j}(x)p_{\bf i, \bf j'}(x) \mu(dx) \\&=&f(x_{\bf i})\left( \int_{C_{\bf i}}  p_{\bf i, \bf j}(x)p_{\bf i, \bf j'}(x) dx+  \int_{C_{\bf i}}  p_{\bf i, \bf j}(x)p_{\bf i, \bf j'}(x) \frac{f(x)-f(x_{\bf i})}{f(x_{\bf i})}dx\right) \nonumber
\\ &=:& f(x_{\bf i}) \left( (I)+(II) \right)
\end{eqnarray*}
Applying the orthonormality of the scaled Legendre polynomials $\sqrt{(2q+1)/2} \, \mathcal{L}_{q}$ on $[-1,1]$ with respect to the Lebesgue measure, we observe that 
\begin{eqnarray*}
 (I)&=& \int_{[-1,1]^D}  \prod_{d=1}^D \sqrt{\frac{2j_d+1}{2}} \, \mathcal{L}_{j_d}\left(x_d\right) \sqrt{\frac{2j'_d+1}{2}} \, \mathcal{L}_{j'_d}\left(x_d\right) dx
\\ &=&  \prod_{d=1}^D \int_{[-1,1]}  \sqrt{\frac{2j_d+1}{2}} \, \mathcal{L}_{j_d}\left(u\right) \sqrt{\frac{2j'_d+1}{2}} \, \mathcal{L}_{j'_d}\left(u\right) du={\bf 1}_{{\bf j}={\bf j'}}.
\end{eqnarray*}
Moreover, by \eqref{eq:hilf005}
\begin{eqnarray*}
 |(II)|&\leq&  \frac{\varpi(1/N)}{f_*} e^{2Q}.
\end{eqnarray*}
Hence, by Gershgorin's theorem (see \cite{HJ}, Theorem 6.1.1),
$$
\lambda_{\max}(R_{\bf i})\leq f(x_{\bf i})\left(1+  {{D+Q}\choose{D}} \frac{\varpi(1/N)}{f_*} e^{2Q} \right) \leq \frac{3}{2} f^*,
$$
if condition \eqref{eq:GC} is satisfied. In the same way we get the bound $\lambda_{\min}(R_{\bf i})\geq f_*/2$.
\end{proof}

\begin{proof}[Proof of Theorem \ref{thm:polynomials2}]
 In view of the previous lemma, we may apply Theorem \ref{thm:noiseless} with
$$
\lambda^*=\frac{3f^*}{2},\quad \lambda_*=\frac{f_*}{2},\quad m\leq  N^D e^{2Q}  {{D+Q}\choose{D}}.
$$
Then,
\begin{eqnarray*}
 -\frac{3 \epsilon^2 L}{(36\frac{f^*}{f_*^2}+4\frac{\epsilon}{f_*})N^D e^{2Q}  {{D+Q}\choose{D}}+6\epsilon \frac{f^*}{f_*}}  \leq -\log(c_0 N^{D+2(Q+\gamma)}).
\end{eqnarray*}
Taking into account that $K=N^D{{D+Q}\choose{D}}$, Theorem \ref{thm:noiseless} yields,
\begin{eqnarray*}
 && E\left[\int_{[0,1]^D} |y(x)-\hat y_L(x)|^2 \mu(dx) \right]\\ &\leq& \left(1+ \frac{3f^*}{f_*(1-\epsilon)}\right) \inf_{\alpha\in \R^K} \int_{[0,1]^D} |y(x)-\alpha^\top \eta(x)|^2 \mu(dx)  \\
&&+   N^{-2(Q+\gamma)} \frac{2 {{D+Q}\choose{D}}}{c_0} \int_{[0,1]^D} |y(x)|^2 \mu(dx).
\end{eqnarray*}
It remains to estimate the approximation error due to the basis choice. Note that 
$$
\inf_{\alpha\in \R^K} \int_{[0,1]^D} |y(x)-\alpha^\top \eta(x)|^2 \mu(dx) = \sum_{{\bf i}} \inf_{p_Q} \int_{C_{\bf i}} \left|y(x)-p_Q(x) \right|^2 \mu(dx),
$$
where the infimum runs over the polynomials of degree at most $Q$. On each cube $C_{\bf i}$ 
fix some $x_{\bf i}\in C_{\bf i}$  and denote by $p_Q$ the Taylor polynomial of degree $Q$ around $x_{\bf i}$. Then, for $x\in  C_{\bf i}$ there is a point $\zeta$ on the line connecting 
$x_{\bf i}$ and $x$ such that
\begin{eqnarray*}
 y(x)-p_Q(x)=\frac{1}{Q!} \sum_{(d_1,\ldots, d_Q)\in \{1,\ldots,D\}^Q} \left(\frac{\partial^Q  y(\zeta)}{\partial_{x_{d_1},\ldots, x_{d_Q}}}- \frac{\partial^Q  y(x)}{\partial_{x_{d_1},\ldots, x_{d_Q}}}\right) \prod_{j=1}^Q (x_{d_j}-x_{{\bf i}, d_j})
\end{eqnarray*}
Hence,
\begin{eqnarray*}
| y(x)-p_Q(x)|\leq \frac{D^Q}{Q!} C |x-\zeta|_2^\gamma \,  |x-x_{\bf i}|_\infty^Q \leq  \frac{D^Q}{Q!} C \sqrt{D}^\gamma N^{-(Q+\gamma)},
\end{eqnarray*}
which implies
\begin{eqnarray*}
 \inf_{\alpha\in \R^K} \int_{[0,1]^D} |y(x)-\alpha^\top \eta(x)|^2 \mu(dx)  \leq  C^2\frac{D^{2Q+\gamma}}{(Q!)^2} N^{-2(Q+\gamma)}.
\end{eqnarray*}
\end{proof}

\section{Proof of Theorem \ref{thm:main}}\label{sec:proof}

This section is devoted to the proof of Theorem \ref{thm:main}. Thus, throughout this section all the notation introduced in Algorithm \ref{alg:1} and Theorem \ref{thm:main} is in force. We define 
\begin{eqnarray*}
 X^{(\Delta)}_2&=&X_2=X_1+b(X_1)\Delta+\sigma(X_1)\sqrt{\Delta} \xi, \\
X^{(\Delta,r_2)}_2&=&X_1+b(X_1)\Delta+\sigma(X_1)\sqrt{\Delta} [\xi]_{r_2}, \\
X^{(\Delta,r_2,\ii)}_2&=&U_\ii+b(U_\ii)\Delta+\sigma(U_\ii)\sqrt{\Delta} [\xi]_{r_2}, 
\end{eqnarray*}
where, for $\ii\in I_\Delta$, $U_\ii$ is uniformly distributed on the cube $\Gamma_\ii=\prod_{d=1}^D (hi_d,h(i_d+1)]$. Here, $h=c_{\textnormal{cube}}\Delta^{\gamma_{\textnormal{cube}}}$. 

The first lemma estimates the influence of truncating the Gaussian innovations at level $r_2$, where
$$
r_2=\sqrt{2\log(c_{2,\textnormal{trunc}}\,\Delta^{- \gamma_{2,\textnormal{trunc}}} \log(\Delta^{-1}))}.
$$

\begin{lem}[Truncation of Gaussian innovations]\label{lem:trunc1}
 Suppose $y\in \mathcal{C}^1_b(\R^D)$ and $\Delta <e^{-1}$, $r_2\geq 1$. Then, there is a constant $C_1$  such that
\begin{eqnarray*}
 E[|\mathcal{H}_{\iota,\Delta}(\xi)  y(X^{(\Delta)}_2)-\mathcal{H}_{\iota,\Delta}([\xi]_{r_2})  y(X^{(\Delta,r_2)}_2)|^2|X_1]\leq C_1 \Delta^{2\gamma_{2,\textnormal{trunc}}/3-|\iota|_1}
\end{eqnarray*}
 \end{lem}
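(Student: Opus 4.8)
The plan is to exploit that the integrand vanishes unless the Gaussian innovation is actually truncated, and then to control the resulting Gaussian tail. Write $W := \mathcal{H}_{\iota,\Delta}(\xi) y(X^{(\Delta)}_2) - \mathcal{H}_{\iota,\Delta}([\xi]_{r_2}) y(X^{(\Delta,r_2)}_2)$ and set $A := \{|\xi|_\infty > r_2\} = \bigcup_{d=1}^D \{|\xi_d| > r_2\}$. On the complement $A^c$ one has $[\xi]_{r_2} = \xi$ and hence $X^{(\Delta,r_2)}_2 = X^{(\Delta)}_2$, so $W = 0$ there and $E[|W|^2\mid X_1] = E[|W|^2 {\bf 1}_A \mid X_1]$. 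Since $y$ is bounded and $\xi$ is independent of $X_1$, I would use $|y| \leq \|y\|_\infty$ to bound, uniformly in $X_1$ (the remaining randomness depending only on $\xi$),
$$
E[|W|^2 \mid X_1] \leq 2\|y\|_\infty^2 \Delta^{-|\iota|_1} E\Big[\Big(\prod_{d=1}^D \mathcal{H}_{\iota_d}(\xi_d)^2 + \prod_{d=1}^D \mathcal{H}_{\iota_d}([\xi_d]_{r_2})^2\Big){\bf 1}_A\Big],
$$
where the factor $\Delta^{-|\iota|_1}$ stems from the scaling $\mathcal{H}_{\iota,\Delta} = \Delta^{-|\iota|_1/2}\prod_d \mathcal{H}_{\iota_d}$. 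Note that only boundedness of $y$ (not its full $\mathcal{C}^1_b$-regularity) is needed, since no cancellation between the two terms is exploited.

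Next I would bound ${\bf 1}_A \leq \sum_{d'=1}^D {\bf 1}_{\{|\xi_{d'}|>r_2\}}$ and use independence of the components $\xi_d$ to factorize each resulting expectation. For the untruncated product this gives $\big(\prod_{d\neq d'} E[\mathcal{H}_{\iota_d}(\xi_d)^2]\big)\, E[\mathcal{H}_{\iota_{d'}}(\xi_{d'})^2 {\bf 1}_{\{|\xi_{d'}|>r_2\}}]$; the non-tail factors are finite constants, and the tail factor is controlled by the one-dimensional estimate $E[\mathcal{H}_q(\xi)^2 {\bf 1}_{\{|\xi|>r_2\}}] = 2\int_{r_2}^\infty \mathcal{H}_q(x)^2\varphi(x)\,dx \leq C_q\, r_2^{2q-1}\varphi(r_2)$, valid for $r_2\geq 1$ via the recursion $\int_r^\infty x^n\varphi(x)\,dx = r^{n-1}\varphi(r) + (n-1)\int_r^\infty x^{n-2}\varphi(x)\,dx$ and the Mills-ratio bound $1-\Phi(r)\leq \varphi(r)/r$. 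For the truncated product, on $\{|\xi_{d'}|>r_2\}$ the $d'$-th factor equals the deterministic value $\mathcal{H}_{\iota_{d'}}(r_2)^2 \leq C r_2^{2\iota_{d'}}$, while each factor with $d\neq d'$ has expectation bounded by a constant through $[\xi_d]_{r_2}^2\leq \xi_d^2$ and a polynomial majorant $\mathcal{H}_q(u)^2\leq \sum_{j\leq q} c_j u^{2j}$; multiplying by $P(|\xi_{d'}|>r_2)\leq 2\varphi(r_2)/r_2$ yields a contribution of the same order $r_2^{2\iota_{d'}-1}\varphi(r_2)$. Summing over $d'$ and using $\iota_{d'}\leq |\iota|_1$ together with $r_2\geq 1$, I arrive at $E[|W|^2\mid X_1] \leq C\, \Delta^{-|\iota|_1}\, r_2^{2|\iota|_1}\, e^{-r_2^2/2}$ for a constant $C$ independent of $\Delta$ and $X_1$.

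Finally I would insert the explicit threshold. From $r_2 = \sqrt{2\log(c_{2,\textnormal{trunc}}\Delta^{-\gamma_{2,\textnormal{trunc}}}\log(\Delta^{-1}))}$ one reads off $e^{-r_2^2/2} = \Delta^{\gamma_{2,\textnormal{trunc}}}/(c_{2,\textnormal{trunc}}\log(\Delta^{-1}))$ and $r_2^{2|\iota|_1} = (2\log(c_{2,\textnormal{trunc}}\Delta^{-\gamma_{2,\textnormal{trunc}}}\log(\Delta^{-1})))^{|\iota|_1}$, so the bound equals $\Delta^{\gamma_{2,\textnormal{trunc}}-|\iota|_1}$ times a purely (poly-)logarithmic factor in $\Delta^{-1}$. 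Writing $\gamma_{2,\textnormal{trunc}}-|\iota|_1 = (2\gamma_{2,\textnormal{trunc}}/3 - |\iota|_1) + \gamma_{2,\textnormal{trunc}}/3$ and noting that $\Delta^{\gamma_{2,\textnormal{trunc}}/3}$ times any power of $\log(\Delta^{-1})$ stays bounded on $(0,e^{-1})$, the logarithmic factor is absorbed and the claimed bound $C_1\Delta^{2\gamma_{2,\textnormal{trunc}}/3 - |\iota|_1}$ follows. The only mildly delicate point is the bookkeeping of the tail integrals and the verification that the slack $\gamma_{2,\textnormal{trunc}}/3$ in the exponent genuinely dominates the accumulated logarithmic factors uniformly for $\Delta<e^{-1}$; everything else is a routine combination of independence, Gaussian tail bounds, and the boundedness of $y$.
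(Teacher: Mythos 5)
Your proof is correct, but it takes a genuinely different route from the paper's. The paper keeps the $\mathcal{C}^1_b$ hypothesis active: it sets $g(x)=\mathcal{H}_{\iota,\Delta}(x)\,y(X_1+b(X_1)\Delta+\sigma(X_1)\sqrt{\Delta}x)$, writes the difference as $\int_0^1\langle\nabla g(\xi+u([\xi]_{r_2}-\xi)),[\xi]_{r_2}-\xi\rangle\,du$, bounds $|\nabla g(x)|_2\le C(1+|x|_2^{|\iota|_1})\Delta^{-|\iota|_1/2}$ via the product rule (this is where $\|\nabla y\|_\infty$ enters), and then applies H\"older with exponents $(3,3/2)$ to split off $E[|[\xi]_{r_2}-\xi|_2^3]^{2/3}\le(6D^{3/2}r_2^2\varphi(r_2))^{2/3}$ from the polynomial moments; the exponent $2\gamma_{2,\textnormal{trunc}}/3$ in the statement is exactly the artifact of this H\"older step, and the $\log(\Delta^{-1})$ built into $r_2$ makes $r_2^2\varphi(r_2)\le C\Delta^{\gamma_{2,\textnormal{trunc}}}$ with no residual logarithm, so no slack in the exponent is needed. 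You instead exploit that the whole integrand vanishes identically off the event $\{|\xi|_\infty>r_2\}$ --- an observation the paper uses only for the increment $[\xi]_{r_2}-\xi$, not for the full expression --- which lets you drop the smoothness of $y$ entirely (boundedness suffices), avoid the gradient computation, and reach the sharper rate $\Delta^{\gamma_{2,\textnormal{trunc}}-|\iota|_1}$ up to poly-logarithmic factors; the stated bound then follows by sacrificing $\Delta^{\gamma_{2,\textnormal{trunc}}/3}$ to absorb those logarithms, which is legitimate since $\gamma_{2,\textnormal{trunc}}>0$ and $\Delta<e^{-1}$. Your bookkeeping is sound throughout: the reduction to the tail event, the union bound with factorization by independence of the coordinates, the tail recursion combined with Mills' ratio (valid for $r_2\ge 1$), the deterministic value $\mathcal{H}_{\iota_{d'}}(\pm r_2)^2\le Cr_2^{2\iota_{d'}}$ of the truncated factor on the tail event, and the uniformity in $X_1$ (after $|y|\le\|y\|_\infty$ nothing depends on $X_1$, and $\xi$ is independent of $X_1$) are all in order. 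In short, your argument is more elementary, needs strictly weaker hypotheses, and proves a stronger estimate; the paper's argument yields clean explicit constants without residual log factors and would still apply if the hard truncation $[\cdot]_{r_2}$ were replaced by a smoothed one for which the difference is nonzero everywhere, but for the truncation actually used it gives away a factor $\Delta^{\gamma_{2,\textnormal{trunc}}/3}$ that your approach recovers.
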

\begin{proof}
 Let
$$
g(x)=g(x;X_1)=\mathcal{H}_{\iota,\Delta}(x)y(X_1+b(X_1)\Delta+\sigma(X_1)\sqrt{\Delta} x).
$$
Then,
\begin{eqnarray}\label{eq:hilf007}
&& |\mathcal{H}_{\iota,\Delta}(\xi)  y(X^{(\Delta)}_2)-\mathcal{H}_{\iota,\Delta}([\xi]_{r_2})  y(X^{(\Delta,r_2)}_2)|= |g(\xi)-g([\xi]_{r_2}) | \nonumber \\ &=& |\int_0^1 \langle \nabla g(\xi+u([\xi]_{r_2}-\xi)), [\xi]_{r_2}-\xi \rangle du|
\nonumber \\ &\leq&   \int_0^1  |[\xi]_{r_2}-\xi|_2 \,|\nabla g(\xi+u([\xi]_{r_2}-\xi))|_2 du.
\end{eqnarray}
We next estimate the gradient of $g$. By the product rule,
$$
|\nabla g(x)|_2\leq \|y\|_\infty \, |\nabla \mathcal{H}_{\iota,\Delta}(x)|_2+\sqrt{\Delta}C_{b,\sigma}\|\nabla y\|_\infty \, |\mathcal{H}_{\iota,\Delta}(x)|.
$$
Denote by $C_{\mathcal{H},q}$ a positive constant such that
$$
|\mathcal{H}_{\iota_0,1}(x)|\leq C_{\mathcal{H},q}(1+|x|_2^q)
$$
for every multi-index $\iota_0$ satisfying $|\iota_0|_1\leq q$. As, for $\iota_j\geq 1$,
$$
\frac{\partial}{\partial x_j}\mathcal{H}_{\iota,\Delta}(x)= \Delta^{-|\iota|_1/2}  \iota_{j} \mathcal{H}_{\iota_j-1}(x_j) \prod_{d\neq j}  \mathcal{H}_{\iota_d}(x_d)=\Delta^{-|\iota|_1/2}  \iota_{j} \mathcal{H}_{\iota-e_j,1}(x),
$$
($e_j$ denoting the $j$th unit vector in $\R^D$), we obtain for $\Delta\leq 1$
\begin{eqnarray*}
|\nabla g(x)|_2&\leq& (\|y\|_\infty \sqrt{D} |\iota|_\infty +C_{b,\sigma}\|\nabla y\|_\infty ) C_{\mathcal{H},|\iota|_1}(1+|x|_2^{|\iota|_1} )\Delta^{-|\iota|_1/2}\\&=:& \hat C_1 (1+|x|_2^{|\iota|_1} )\Delta^{-|\iota|_1/2}.
\end{eqnarray*}
Plugging this estimate into \eqref{eq:hilf007} and applying Jensen's inequality,  Fubini's theorem, and H\"older's inequality yields
\begin{eqnarray*}
 && E[|\mathcal{H}_{\iota,\Delta}(\xi)  y(X^{(\Delta)}_2)-\mathcal{H}_{\iota,\Delta}([\xi]_{r_2})  y(X^{(\Delta,r_2)}_2)|^2|X_1]  \\
&\leq & E[|[\xi]_{r_2}-\xi|_2^3 ]^{2/3} \int_0^1 E[ (\hat C_1 (1+|\xi+u([\xi]_{r_2}-\xi)|_2^{|\iota|_1} )\Delta^{-|\iota|_1/2} )^6 ]^{1/3} du \\
&\leq& \hat C_1^2 \Delta^{-|\iota|_1}  E[|[\xi]_{r_2}-\xi|_2^3 ]^{2/3} \max_{u\in [0,1]} E[(1+|\xi+u([\xi]_{r_2}-\xi)|_2^{|\iota|_1} )^6 ]^{1/3}. 
\end{eqnarray*}
We next estimate the last factor. By Jensen's inequality (twice),
\begin{eqnarray*}
 E[(1+|\xi+u([\xi]_{r_2}-\xi)|_2^{|\iota|_1} )^6 ] \leq 2^{5}(1+ D^{3 |\iota|_1}E[|\xi_1+u([\xi_1]_{r_2}-\xi_1)|^{6 |\iota|_1} ]
\end{eqnarray*}
By convexity, the supremum over $u\in [0,1]$ of the righthand side is attained at $u=0$ or $u=1$. As the absolute moments of the truncated normal distribution are smaller than those of the normal distribution, it is, in fact, attained at $u=0$. 
Recalling that the $q$th moment of a normal distribution is given by $(q-1)!!$ for even $q$, we thus obtain
\begin{eqnarray*}
 && E[|\mathcal{H}_{\iota,\Delta}(\xi)  y(X^{(\Delta)}_2)-\mathcal{H}_{\iota,\Delta}([\xi]_{r_2})  y(X^{(\Delta,r_2)}_2)|^2|X_1]  \\
&\leq& \hat C_1^2 2^{5/3} (1+ D^{3 |\iota|_1+1}(6 |\iota|_1-1)!!)^{1/3}\Delta^{-|\iota|_1}  E[|[\xi]_{r_2}-\xi|_2^3 ]^{2/3}. 
\end{eqnarray*}
Finally,
\begin{eqnarray*}
  E[|[\xi]_{r_2}-\xi|_2^3 ]&=&E[(\sum_{d=1}^D |\xi_d-[\xi_d]_{r_2}|^2 ])^{3/2}]\leq D^{3/2}  E[|\xi_1-[\xi_1]_{r_2}|^3]\\ &\leq& 2D^{3/2} \int_{r_2}^\infty u^3\varphi(u)du=
D^{3/2} (2r^2_2\varphi(r_2)+4\varphi(r_2))\leq 6 D^{3/2} r^2_2\varphi(r_2).
\end{eqnarray*}
Combining the previous estimates, we arrive at
\begin{eqnarray*}
  && E[|\mathcal{H}_{\iota,\Delta}(\xi)  y(X^{(\Delta)}_2)-\mathcal{H}_{\iota,\Delta}([\xi]_{r_2})  y(X^{(\Delta,r_2)}_2)|^2|X_1]\\ &\leq& 2^{5/3} 6^{2/3} D
(\|y\|_\infty \sqrt{D} |\iota|_\infty +C_{b,\sigma}\|\nabla y\|_\infty )^2 C^2_{\mathcal{H},|\iota|_1}  (1+ D^{3 |\iota|_1+1}(6 |\iota|_1-1)!!)^{1/3} \\ &&\times \Delta^{-|\iota|_1}  (r^2_2\varphi(r_2))^{2/3}.
\end{eqnarray*}
Taking the form of $r_2$ into account, we get
$$
r^2_2\varphi(r_2)\leq \sqrt{\frac{2}{\pi}} (\log(c_{2,\textnormal{trunc}})+\gamma_{2,\textnormal{trunc}}+1)\frac{\Delta^{\gamma_{2,\textnormal{trunc}}}}{c_{2,\textnormal{trunc}}},
$$
which finishes the proof.
\end{proof}

We next denote by $\Theta$ a (finite) family of random variables independent of $(X_1,\xi,U_i)_{i\in I_\Delta}$. We think of $\Theta$   as containing the simulated samples which are applied for estimating $y$, and assume that some measurable estimator 
$\hat y(x_1, x_2;\theta)$ of $y(x_2)$ is given. Recall that the truncation in space for the $x_1$-variables (i.e. the set $\Gamma$ in Step 1 of Algorithm \ref{alg:1}) depends on the constant
$$r_1=\sqrt{C_{2,f} \chi^2_D(c_{1,\textnormal{trunc}}\,\Delta^{ \gamma_{1,\textnormal{trunc}}})}.$$

The next lemma takes care of the change in the sampling distribution and removes the derivative weight.

\begin{lem}[Removal of the derivative weight and change of measure]\label{lem:trunc2} Suppose $y\in \mathcal{C}^{Q}_b(\R^D)$ for $Q\in \N$ s.t. $Q\geq|\iota|_1 $, and let $\Delta <\min(e^{-1}, c_{1,\textnormal{trunc}}^{-1/\gamma_{1,\textnormal{trunc}}})$,
 $r_2\geq 1$, $\gamma_{2,\textnormal{trunc}}\geq 3|\iota|_1/2$.
 Let $\Gamma:=\Gamma^{(\Delta)}:=\cup_{\ii\in I_\Delta} \Gamma_\ii$ and assume $\hat y(x_1,x_2;\theta)=0$ for $x_1\notin \Gamma$.   
Choose $\sigma(\Theta)$-measurable sets $\tilde \Gamma_\ii$ $(\ii\in I_\Delta)$ such that for every $\ii \in I_\Delta$,
 $$
 \hat y(x_1,x_2;\Theta(\omega))=0 \textnormal{ for every } x_1\in \Gamma_\ii, \;x_2\in \R^D,\; \omega \in \tilde \Gamma_\ii^c.
 $$
 (One may choose $\tilde \Gamma_\ii=\Omega$ to ensure that this condition is always satisfied).
 
  Then, there is a constant $C_2$ such that
\begin{eqnarray*}
 && E[|E[\mathcal{H}_{\iota,\Delta}([\xi]_{r_2})  y(X^{(\Delta,r_2)}_2)|X_1] -E[\mathcal{H}_{\iota,\Delta}([\xi]_{r_2})  \hat y(X_1,X^{(\Delta,r_2)}_2,\Theta)|X_1,\Theta]|^2 ]\\ &\leq& C_2\,\left(\Delta^{\gamma_{1,\textnormal{trunc}}} 
+ (1+\log(\Delta^{-1})^{D/2}) \max_{\ii\in I_\Delta}  P(\tilde \Gamma_\ii^c)\right. \\ && + \left.   \Delta^{-|\iota|_1} (1+\log(\Delta^{-1})^{D/2}) \max_{\ii\in I_\Delta} 
E[{\bf 1}_{\tilde \Gamma_i}| y(X^{(\Delta,r_2,\ii)}_2) - \hat y(U_\ii,X^{(\Delta,r_2,\ii)}_2,\Theta) |^2]\right).
\end{eqnarray*}
\end{lem}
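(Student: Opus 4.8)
The plan is to pass from the conditional expectation $g(x_1):=E[\mathcal H_{\iota,\Delta}([\xi]_{r_2}) y(X^{(\Delta,r_2)}_2)\mid X_1=x_1]$ and its estimated counterpart $\hat g(x_1;\theta):=E[\mathcal H_{\iota,\Delta}([\xi]_{r_2})\hat y(x_1,X^{(\Delta,r_2)}_2;\theta)\mid X_1=x_1]$ to the pure ``later''-error $y-\hat y$, while simultaneously replacing the law $\mu_1$ of $X_1$ by the uniform law on the cubes $\Gamma_\ii$. The preparatory step, and the one I expect to be the crux, is a bound $\sup_{x_1}|g(x_1)|\le C$ that is \emph{uniform in $\Delta$}. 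The naive estimate $|g|\le\|y\|_\infty E[|\mathcal H_{\iota,\Delta}([\xi]_{r_2})|]\le C\Delta^{-|\iota|_1/2}$ is too weak, since it would turn the space-truncation contribution into $\Delta^{\gamma_{1,\textnormal{trunc}}-|\iota|_1}$ rather than the claimed $\Delta^{\gamma_{1,\textnormal{trunc}}}$. Instead I would first treat the untruncated quantity $g_0(x_1):=E[\mathcal H_{\iota,\Delta}(\xi) y(X^{(\Delta)}_2)\mid X_1=x_1]$: Gaussian integration by parts in $\xi$ (the Hermite identity $E[\mathcal H_q(\xi)\phi(\xi)]=E[\phi^{(q)}(\xi)]$) turns each factor $\mathcal H_{\iota_d}$ into a derivative, the chain rule produces a factor $(\sqrt\Delta)^{|\iota|_1}$ that exactly cancels the $\Delta^{-|\iota|_1/2}$ in $\mathcal H_{\iota,\Delta}$, and boundedness of $b$, $\sigma$ and of the derivatives of $y$ up to order $|\iota|_1$ (available since $Q\ge|\iota|_1$) yields $|g_0|\le C$. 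The truncation correction $|g-g_0|\le C$ then follows from Lemma \ref{lem:trunc1} together with Jensen's inequality, the hypothesis $\gamma_{2,\textnormal{trunc}}\ge 3|\iota|_1/2$ guaranteeing that the exponent $2\gamma_{2,\textnormal{trunc}}/3-|\iota|_1$ is nonnegative so that the bound stays finite as $\Delta\to 0$.

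With $\sup|g|\le C$ in hand, I would split $E[|g(X_1)-\hat g(X_1;\Theta)|^2]$ according to whether $X_1\in\Gamma$. Off $\Gamma$ the assumption $\hat y\equiv 0$ gives $\hat g=0$, so this contribution is at most $(\sup|g|^2)\,P(X_1\notin\Gamma)$; since $\Gamma\supseteq\{|x|_2\le r_1\}$, the Aronson density bound of Assumption \ref{ass:1} together with the definition of $r_1$ through the $\chi^2_D$-quantile gives $P(X_1\notin\Gamma)\le C_{1,f}\,c_{1,\textnormal{trunc}}\,\Delta^{\gamma_{1,\textnormal{trunc}}}$, which is the first term.

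On $\{X_1\in\Gamma\}$ I would decompose cube by cube and, for each $\ii$, split on $\tilde\Gamma_\ii$ versus $\tilde\Gamma_\ii^c$. On $\tilde\Gamma_\ii^c$ (with $x_1\in\Gamma_\ii$) one has $\hat y=0$, hence $g-\hat g=g$, and the estimate $(\sup|g|^2)\,\mu_1(\Gamma_\ii)\,P(\tilde\Gamma_\ii^c)$ summed over $\ii$ gives at most the second claimed term, the log-factor there being a harmless over-estimate since $\sum_\ii\mu_1(\Gamma_\ii)\le 1$. On $\tilde\Gamma_\ii$ I would invoke the pointwise conditional Cauchy--Schwarz inequality $|g(x_1)-\hat g(x_1;\theta)|^2\le E[\mathcal H_{\iota,\Delta}([\xi]_{r_2})^2]\,\psi(x_1;\theta)$, where $\psi(x_1;\theta):=E[|y(X^{(\Delta,r_2)}_2)-\hat y(x_1,X^{(\Delta,r_2)}_2;\theta)|^2\mid X_1=x_1]$; independence of the components of $\xi$ and finiteness of the truncated Hermite second moments bound the first factor by $C\Delta^{-|\iota|_1}$, which is the source of the derivative-weight loss.

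Finally I would carry out the change of measure. Because $X_1$ and $\Theta$ are independent and $\tilde\Gamma_\ii\in\sigma(\Theta)$, the $\tilde\Gamma_\ii$-part equals $E_\Theta[\mathbf 1_{\tilde\Gamma_\ii}\int_{\Gamma_\ii}\psi(x;\Theta)f(x)\,dx]$; bounding $f$ on $\Gamma_\ii$ by the Aronson constant and writing $\int_{\Gamma_\ii}\psi(x;\Theta)\,dx=h^D\,E[\psi(U_\ii;\Theta)]$ identifies this integral, up to the factor $h^D$, with the uniform-design ``later''-error $E[\mathbf 1_{\tilde\Gamma_\ii}|y(X^{(\Delta,r_2,\ii)}_2)-\hat y(U_\ii,X^{(\Delta,r_2,\ii)}_2;\Theta)|^2]$, since $X^{(\Delta,r_2)}_2$ given $X_1=x$ and $X^{(\Delta,r_2,\ii)}_2$ given $U_\ii=x$ have the same conditional law. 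Summing over $\ii$ and using $\sum_\ii h^D=\Vol(\Gamma)$, it remains to show $\Vol(\Gamma)\le C(1+\log(\Delta^{-1})^{D/2})$; this is the only place where the log-factor genuinely enters, and it follows from the inclusion $\Gamma\subseteq\{|x|_2\le r_1+h\sqrt D\}$ together with the quantile asymptotics $r_1^2=C_{2,f}\chi^2_D(c_{1,\textnormal{trunc}}\Delta^{\gamma_{1,\textnormal{trunc}}})\sim 2C_{2,f}\gamma_{1,\textnormal{trunc}}\log(\Delta^{-1})$. Collecting the three contributions yields the asserted bound, with $C_2$ absorbing $\sup|g|^2$, the Aronson constants, and the geometric constants.
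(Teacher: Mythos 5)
Your proposal is correct and follows essentially the same route as the paper's proof: the same three-way decomposition (off $\Gamma$; on $\Gamma_\ii\cap\tilde\Gamma_\ii^c$; on $\Gamma_\ii\cap\tilde\Gamma_\ii$), the same uniform bound on the regression function via Gaussian integration by parts (the paper's equation \eqref{eq:relation_weight_derivative}) combined with Lemma \ref{lem:trunc1} and the hypothesis $\gamma_{2,\textnormal{trunc}}\geq 3|\iota|_1/2$, the same Aronson-plus-$\chi^2$-quantile estimate for $P(X_1\notin\Gamma)$, the same conditional Cauchy--Schwarz step producing the $\Delta^{-|\iota|_1}$ factor, and the same change of measure to the uniform law on the cubes with $\lambda^{\otimes D}(\Gamma)$ supplying the logarithmic factor. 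The only deviations are cosmetic: you bound $E[\mathcal{H}_{\iota,\Delta}([\xi]_{r_2})^2]$ by direct moment estimates rather than by applying Lemma \ref{lem:trunc1} with $y\equiv 1$, and your treatment of the $\tilde\Gamma_\ii^c$ term via $\sum_\ii\mu_1(\Gamma_\ii)\leq 1$ is in fact slightly sharper than the paper's.
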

\begin{proof}
 We decompose, using H\"older's inequality and exploiting that $\Theta$ is independent of $(X_1,X_2,\xi)$,
\begin{eqnarray*}
 && |E[\mathcal{H}_{\iota,\Delta}([\xi]_{r_2})  y(X^{(\Delta,r_2)}_2)|X_1] -E[\mathcal{H}_{\iota,\Delta}([\xi]_{r_2})  \hat y(X_1,X^{(\Delta,r_2)}_2,\Theta)|X_1,\Theta]|^2 \\
&\leq& \left({\bf 1}_{\{X_1\notin \Gamma\}}+\sum_{\ii\in I_\Delta} {\bf 1}_{\tilde\Gamma_\ii^c}{\bf 1}_{\{X_1\in \Gamma_\ii\}}  \right) E[\mathcal{H}_{\iota,\Delta}([\xi]_{r_2})  y(X^{(\Delta,r_2)}_2)|X_1]^2 \\ 
&&+ \sum_{\ii\in I_\Delta}  {\bf 1}_{\tilde \Gamma_\ii}{\bf 1}_{\{X_1\in \Gamma_\ii\}}  E[|\mathcal{H}_{\iota,\Delta}([\xi]_{r_2})|^2] E[   |y(X^{(\Delta,r_2)}_2)-\hat y(X_1,X^{(\Delta,r_2)}_2,\Theta) |^2|X_1,\Theta] 
\\ &=&(I)+(II).
\end{eqnarray*}
Let
$$
z(x)=E[\mathcal{H}_{\iota,\Delta}(\xi)  y(X^{(\Delta)}_2)|X_1=x].
$$
By \eqref{eq:relation_weight_derivative}, the regression function $z$ is bounded. Applying the previous lemma, we get,
\begin{eqnarray*}
 (I)\leq \left({\bf 1}_{\{X_1\notin \Gamma\}}+\sum_{\ii\in I_\Delta} {\bf 1}_{\tilde \Gamma_\ii^c}{\bf 1}_{\{X_1\in \Gamma\ii\}}  \right) 2 (\|z\|_\infty^2+  C_1 \Delta^{2\gamma_{2,\textnormal{trunc}}/3-|\iota|_1}), 
\end{eqnarray*}
and, thus, by independence of $X_1$ and $\Theta$,
\begin{equation}\label{eq:hilf009}
E[(I)]\leq 2 \left(P(\{X_1\notin \Gamma\})+ P(\{X_1\in \Gamma\})\max_{i\in I_\Delta} P(\tilde \Gamma_i^c) \right)  (\|z\|_\infty^2+  C_1 ).
\end{equation}
However,
\begin{eqnarray*}
 P(\{X_1\notin \Gamma\})&=&\int_{\Gamma^c} f(x) dx \leq \int_{\{|x|_2>r_1\}}   \frac{C_{1,f}}{(2\pi C_{2,f})^{D/2}} \exp \left\{\frac{-|x|_2^2}{2 C_{2,f}} \right\} dx
 \\ &=& C_{1,f}  \int_{\{|x|^2_2>r_1^2/C_{2,f}\}}   \frac{1}{(2\pi)^{D/2}} \exp \left\{\frac{-|x|_2^2}{2 } \right\} dx\\ &=&C_{1,f} P(\{\Xi>r_1^2/C_{2,f}\}),
\end{eqnarray*}
where $\Xi$ is $\chi^2$-distributed with $D$ degrees of freedom. Taking the particular form of $r_1$ into account, we obtain 
\begin{equation}\label{eq:hilf010}
 P(\{X_1\notin \Gamma\}) \leq  C_{1,f} c_{1,\textnormal{trunc}}\,  \Delta^{ \gamma_{1,\textnormal{trunc}}}.
\end{equation}
Moreover,
\begin{equation}\label{eq:hilf011}
P(\{X_1\in \Gamma\})\leq \frac{C_{1,f}}{(2\pi C_{2,f})^{D/2}} \lambda^{\otimes D}(\Gamma).
\end{equation}
Since $\Gamma \subset [-(r_1+h),r_1+h]^D$ we obtain,
$$
\lambda^{\otimes D}(\Gamma)\leq 2^D(r_1+h)^D. 
$$
As the $(1-\alpha)$-quantiles of a $\chi^2$-distribution with $D$ degrees of freedom satisfy 
$$
\chi^2_D(\alpha)\leq D+2 \log(1/\alpha) + 2\sqrt{D\log(1/\alpha)}\leq 2D+3\log(1/\alpha),
$$
see e.g. \cite{In},
we observe that 
$$
\chi^2_D(c_{1,\textnormal{trunc}}\,\Delta^{ \gamma_{1,\textnormal{trunc}}})\leq 2D+ 3\log(c_{1,\textnormal{trunc}}^{-1})+3\gamma_{1,\textnormal{trunc}} \log(\Delta^{-1}).
$$
Hence,
\begin{equation}\label{eq:hilf012}
\lambda^{\otimes D}(\Gamma)\leq 2^D (c_{\textnormal{cube}} + \sqrt{2C_{2,f}D+ 3C_{2,f}(\log(c_{1,\textnormal{trunc}}^{-1})\vee 0)}+\sqrt{3C_{2,f}\gamma_{1,\textnormal{trunc}} \log(\Delta^{-1})})^D.
\end{equation}
Combining \eqref{eq:hilf009}--\eqref{eq:hilf012}, we arrive at
$$
E[(I)] \leq C_2 \left(\Delta^{ \gamma_{1,\textnormal{trunc}}}+ (1+\log(\Delta^{-1})^{D/2}) \max_{\ii\in I_\Delta} P(\tilde \Gamma_i^c) \right).
$$

We next turn to term (II). Applying the previous lemma once more with $y\equiv 1$, we get by the orthonormality of the Hermite polynomials $\mathcal{H}_{\iota,1}$ with respect to the distribution of $\xi$
$$
E[|\mathcal{H}_{\iota,\Delta}([\xi]_{r_2})|^2]\leq 2(E[|\mathcal{H}_{\iota,\Delta}(\xi)|^2]+  C_1 \Delta^{2\gamma_{2,\textnormal{trunc}}/3-|\iota|_1}) \leq 2(1+C_1) \Delta^{-|\iota|_1}.
$$
Thus,
\begin{eqnarray*}
 && E[(II)]\\&\leq & 2(1+C_1) \Delta^{-|\iota|_1} \sum_{\ii\in I_{\Delta}} E[{\bf 1}_{\tilde\Gamma_\ii} \int_{\R^D} \int_{\Gamma_\ii} |y(x_1+b(x_1)\Delta+\sigma(x_1)\sqrt{\Delta}[w]_{r_2})\\ &&\quad\quad- \hat y(x_1,x_1+b(x_1)\Delta+\sigma(x_1)\sqrt{\Delta}[w]_{r_2};\Theta)|^2
f(x_1)\varphi^{\otimes D}(w) dx_1 dw] \\
&\leq&  \Delta^{-|\iota|_1} \frac{2(1+C_1) C_{1,f}}{(2\pi C_{2,f})^{D/2}} \lambda^{\otimes D}(\Gamma) \max_{\ii\in I_{\Delta}} E[ {\bf 1}_{\tilde\Gamma_\ii} \int_{\R^D} \int_{\Gamma_\ii} |y(x_1+b(x_1)\Delta+\sigma(x_1)\sqrt{\Delta}[w]_{r_2})\\ &&\quad\quad- \hat y(x_1,x_1+b(x_1)\Delta+\sigma(x_1)\sqrt{\Delta}[w]_{r_2};\Theta)|^2
\frac{1}{\lambda^{\otimes D}(\Gamma_\ii)}\varphi^{\otimes D}(w) dx_1 dw] \\ &\leq &  \frac{2(1+C_1)C_{1,f}}{(2\pi C_{2,f})^{D/2}} \lambda^{\otimes D}(\Gamma)  \Delta^{-|\iota|_1} \max_{\ii\in I_\Delta} 
E[{\bf 1}_{\tilde\Gamma_\ii} | y(X^{(\Delta,r_2,\ii)}_2) - \hat y(U_\ii,X^{(\Delta,r_2,\ii)}_2,\Theta) |^2].
\end{eqnarray*}
In view of \eqref{eq:hilf012}, the proof is complete.
\end{proof}

We next consider
$$
\tilde X_\Delta=U_h+b(U_h)\Delta +\sigma(U_h)\sqrt{\Delta} [\xi]_{r_2},
$$
where $U_h$ is uniformly distributed on a cube $\tilde \Gamma=a_0+(-h/2,h/2]^D$, $a_0\in \R^D$, and $\xi$ is a $D$-dimensional vector of independent standard normal variables. We suppose that $h=c_{\textnormal{cube}} \Delta^{\gamma_{\textnormal{cube}}}$ for some $0<\gamma_{\textnormal{cube}}< 1/2$ 
and $c_{\textnormal{cube}}>0$. Hence, $\tilde X_\Delta$ corresponds to $X^{(\Delta,r_2,{\bf i})}_2$ on a `generic' cube of volume $h^D$.

 Recall the construction of  the basis functions in Algorithm \ref{alg:1} for fixed  $Q\in \N_0$: Denoting by $\mathcal{L}_q$ the Legendre polynomial of degree $q\leq Q$,
and, given 
a multi-index ${\bf j}\in \N_0^D$  such that $|\jj|_1\leq Q$, we consider the polynomials
$$
p_{\bf j}(x)=  \prod_{d=1}^D \sqrt{2j_d+1} \, \mathcal{L}_{j_d}\left(x_d\right),
$$
which are finally rescaled to 
$$
\eta_{\bf j}(x)=p_{\bf j}\left(\frac{x-a_0}{h/2}\right),\quad {\bf j}\in \N_0^D,\, \sum_d j_d\leq Q.
$$
We fix some ordering of these ${{D+Q}\choose{D}}$  basis functions and write $\eta_k$, $k=1,\ldots K:= {{D+Q}\choose{D}}$,
$$
R_{\Delta}=(E[\eta_k(\tilde X_\Delta)\eta_\kappa(\tilde X_\Delta)])_{k,\kappa=1,\ldots,K}.
$$
In order to apply Theorem \ref{thm:noiseless}, we need to estimate
\begin{eqnarray*}
 \lambda_{\min,\Delta}:=\lambda_{\min}(R_{\Delta}),\quad \lambda_{\max,\Delta}:=\lambda_{\max}(R_{\Delta}),\quad m_{\Delta}:=\sup_{x\in \supp(\tilde X_\Delta))} \sum_{k=1}^K |\eta_k(x)|^2.
\end{eqnarray*}

\begin{lem}\label{lem:eigenvalues}
 As $\Delta\rightarrow 0$, we have
$$
\lambda_{\min,\Delta} \rightarrow 1,\quad \lambda_{\max,\Delta}\rightarrow 1,\quad m_{\Delta}\rightarrow \sum_{{\bf j}\in \N_0^D; |{\bf j}|_1\leq Q}\; \prod_{d=1}^D (2j_d+1).
$$
\end{lem}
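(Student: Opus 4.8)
The plan is to rescale everything so that the statement reduces to the orthonormality of the scaled Legendre polynomials with respect to the uniform law on $[-1,1]^D$. Write $u_0=(U_h-a_0)/(h/2)$, which is uniformly distributed on $[-1,1]^D$, and set $u=(\tilde X_\Delta-a_0)/(h/2)=u_0+v$ with $v=(b(U_h)\Delta+\sigma(U_h)\sqrt{\Delta}\,[\xi]_{r_2})/(h/2)$, so that $\eta_k(\tilde X_\Delta)=p_{\bf j}(u)$ under the bijection $k\leftrightarrow{\bf j}$. Using $|b|_2,\|\sigma\|_2\le C_{b,\sigma}$, $|[\xi]_{r_2}|_2\le\sqrt{D}\,r_2$ and $h=c_{\textnormal{cube}}\Delta^{\gamma_{\textnormal{cube}}}$, I get the deterministic bound $|v|_\infty\le|v|_2\le\delta_\Delta:=\frac{2C_{b,\sigma}}{c_{\textnormal{cube}}}(\Delta^{1-\gamma_{\textnormal{cube}}}+\sqrt{D}\,r_2\,\Delta^{1/2-\gamma_{\textnormal{cube}}})$. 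Since $\gamma_{\textnormal{cube}}<1/2$ and $r_2=O(\sqrt{\log(\Delta^{-1})})$, the factor $r_2\,\Delta^{1/2-\gamma_{\textnormal{cube}}}\to 0$, hence $\delta_\Delta\to 0$; this is the one place where the restriction $\gamma_{\textnormal{cube}}<1/2$ enters. Finally, the normalization $\sqrt{2j_d+1}$ makes the $p_{\bf j}$ orthonormal for $u_0$, i.e. $E[p_{\bf j}(u_0)p_{{\bf j}'}(u_0)]=\delta_{{\bf j},{\bf j}'}$ by the orthogonality $\int_{-1}^1\mathcal{L}_{j}\mathcal{L}_{j'}\,du=\frac{2}{2j+1}\delta_{j,j'}$.

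For the eigenvalues I would show $R_\Delta\to\mathbb{I}_K$ entrywise. Each entry equals $E[q_{{\bf j},{\bf j}'}(u_0+v)]$ with $q_{{\bf j},{\bf j}'}:=p_{\bf j}\,p_{{\bf j}'}$ a fixed polynomial; on the compact set $[-2,2]^D$ (which contains $u_0+v$ once $\delta_\Delta\le 1$) it is Lipschitz with some constant $L_{{\bf j},{\bf j}'}$, so $|E[q_{{\bf j},{\bf j}'}(u_0+v)]-E[q_{{\bf j},{\bf j}'}(u_0)]|\le L_{{\bf j},{\bf j}'}\,\delta_\Delta\to 0$. Combined with the orthonormality identity this gives $R_{\Delta,k\kappa}\to\delta_{{\bf j},{\bf j}'}$. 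As $K$ is fixed, $\|R_\Delta-\mathbb{I}_K\|_2\to 0$, and the eigenvalue Lipschitz bound already invoked in Lemma \ref{lem:ev_vs_spectral_norm} yields $|\lambda_{\min,\Delta}-1|,|\lambda_{\max,\Delta}-1|\le\|R_\Delta-\mathbb{I}_K\|_2\to 0$.

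For $m_\Delta$ I would first identify the maximizer of the fixed polynomial $P(u):=\sum_{|{\bf j}|_1\le Q}p_{\bf j}(u)^2=\sum_{|{\bf j}|_1\le Q}\prod_{d=1}^D(2j_d+1)\mathcal{L}_{j_d}(u_d)^2$ on $[-1,1]^D$. Since $|\mathcal{L}_{j}|\le 1$ on $[-1,1]$ with $\mathcal{L}_{j}(\pm1)^2=1$, every summand, and hence $P$, is maximized at the corner $u=(1,\dots,1)$, where $P(1,\dots,1)=\sum_{|{\bf j}|_1\le Q}\prod_d(2j_d+1)$ is exactly the claimed limit. For the upper bound, $m_\Delta=\sup_{u\in\supp(u)}P(u)\le\sup_{[-1-\delta_\Delta,1+\delta_\Delta]^D}P$, which converges to $\sup_{[-1,1]^D}P$ by continuity of $P$ and $\delta_\Delta\to 0$. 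For the matching lower bound I argue via the support directly: for fixed small $r\in(0,1)$ the event $\{u_0\in[1-r,1]^D\}$ has positive probability, and on it $u\in[1-r-\delta_\Delta,1+\delta_\Delta]^D$; hence $P(u\in[1-r-\delta_\Delta,1+\delta_\Delta]^D)>0$ and $\supp(u)$ meets this closed box, so any support point $u^*$ there gives $m_\Delta\ge P(u^*)\ge\inf_{[1-r-\delta_\Delta,1+\delta_\Delta]^D}P$; letting $\Delta\to 0$ and then $r\to 0$ gives $\liminf_\Delta m_\Delta\ge P(1,\dots,1)$.

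The one point requiring care is this last support argument, because $b$ and $\sigma$ are only assumed measurable (not continuous), so one cannot simply claim that the image of a support point of $(U_h,\xi)$ under the defining map lies in $\supp(\tilde X_\Delta)$. Working instead with positive-probability events for $u_0$ (whose law is explicitly uniform) together with the almost-sure perturbation bound $|v|_\infty\le\delta_\Delta$ sidesteps any regularity requirement on $b,\sigma$ and is the main technical subtlety; the eigenvalue statements, by contrast, are routine once $\delta_\Delta\to 0$ and the orthonormality of the scaled Legendre basis are in hand.
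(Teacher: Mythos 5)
Your proof is correct, and its skeleton is the same as the paper's: rescale to $u_0=(U_h-a_0)/(h/2)$, uniform on $(-1,1]^D$, use the orthonormality of the scaled Legendre polynomials for this law, show the perturbation $v$ is asymptotically negligible, and conclude via an eigenvalue perturbation bound; for $m_\Delta$, both arguments sandwich the support between boxes shrinking to $[-1,1]^D$. The implementation differs in two ways worth noting. For the eigenvalues, the paper proves a separate perturbation lemma bounding the entries of $R_\Delta-\mathbb{I}_K$ via the polynomial-growth estimate $|\nabla(\tilde\eta_k\tilde\eta_{k'})(x)|_2\le c_\eta(1+|x|_2^{2Q-1})$, H\"older's inequality, and moments $E[|X^{(2)}-X^{(1)}|_2^2]^{1/2}$ and $E[|X^{(2)}-X^{(1)}|_2^{2Q}]$ of the \emph{untruncated} innovations (it simply bounds $|[\xi]_{r_2}|_2\le|\xi|_2$), and then applies Gershgorin's theorem; you instead exploit the truncation at $r_2$ to obtain the almost-sure bound $|v|_\infty\le\delta_\Delta\rightarrow 0$, so that a Lipschitz constant on the fixed compact set $[-2,2]^D$ suffices, and you conclude with the spectral-norm eigenvalue inequality already invoked in Lemma \ref{lem:ev_vs_spectral_norm}. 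The paper's lemma is the more general tool (it requires only moment bounds, not bounded perturbations), but your route is more elementary and fully adequate here, since the deterministic bound is exactly what the paper itself uses in its $m_\Delta$ step. For $m_\Delta$, your argument is in fact more complete than the paper's: the paper records the upper inclusion of $\supp(X^{(2)})$ in $[-1-\delta_\Delta,1+\delta_\Delta]^D$ together with the endpoint-maximality of squared Legendre polynomials, but leaves the matching lower bound implicit; your positive-probability argument (a closed set of positive measure must meet the support) supplies precisely this missing half, and you are right that it is the clean way to do so given that $b$ and $\sigma$ are only measurable under Assumption \ref{ass:1}, so one cannot push support points through the defining map.
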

As a preparation for the eigenvalue estimates we first prove the following lemma.
\begin{lem}
 Suppose $\tilde\eta=(\tilde\eta_1,\ldots, \tilde\eta_K)^\top$  is a system of orthonormal polynomials of degree at most $Q\in \N$ for the law of some $\R^D$-valued random variable  $ X^{(1)}$. Let $X^{(2)}$ be another $\R^D$-valued random variable. We consider the matrix 
$$
R^{(2)}= E[\tilde\eta(X^{(2)})\tilde\eta^\top(X^{(2)})].
$$
Then, there is a constant $c_{\tilde\eta}$, which only depends on the coefficients of the polynomials $\tilde\eta_k$ (and on $Q$, $D$) such that
\begin{eqnarray*}
\lambda_{\min}(R^{(2)}) &\geq& 1- K c_{\tilde\eta}  \left( E[|  X^{(1)}-X^{(2)} |^2_2]^{1/2}(1+  E[|  X^{(1)}|^{4Q-2}_2]^{1/2})+  E[|  X^{(1)}-X^{(2)} |^{2Q}_2]\right)\\
\lambda_{\max}(R^{(2)}) &\leq& 1+ K c_{\tilde\eta}  \left( E[|  X^{(1)}-X^{(2)} |^2_2]^{1/2}(1+  E[|  X^{(1)}|^{4Q-2}_2]^{1/2})+  E[|  X^{(1)}-X^{(2)} |^{2Q}_2]\right) 
\end{eqnarray*}
\end{lem}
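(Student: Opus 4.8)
The plan is to reduce both eigenvalue estimates to a single bound on the spectral distance between $R^{(2)}$ and the matrix $R^{(1)}:=E[\tilde\eta(X^{(1)})\tilde\eta^\top(X^{(1)})]$. Since $\tilde\eta_1,\ldots,\tilde\eta_K$ are orthonormal with respect to the law of $X^{(1)}$, we have $R^{(1)}=\mathbb{I}_K$, the $K\times K$ identity matrix, so that $\lambda_{\min}(R^{(1)})=\lambda_{\max}(R^{(1)})=1$. As $R^{(1)}$ and $R^{(2)}$ are symmetric, the eigenvalue perturbation inequality already used in the proof of Lemma~\ref{lem:ev_vs_spectral_norm} (\cite{HJ}, Corollary 7.3.8) applies to both extremal eigenvalues and yields $|\lambda_{\min}(R^{(2)})-1|\leq \|R^{(2)}-R^{(1)}\|_2$ and $|\lambda_{\max}(R^{(2)})-1|\leq \|R^{(2)}-R^{(1)}\|_2$. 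Hence it suffices to bound $\|R^{(2)}-R^{(1)}\|_2$ by the parenthetical expression on the right-hand side of the claimed inequalities.

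For a symmetric $K\times K$ matrix $M$ one has $\|M\|_2\leq \max_{k}\sum_{\kappa}|M_{k\kappa}|\leq K\max_{k,\kappa}|M_{k\kappa}|$, which produces the factor $K$. Thus the task reduces to bounding a single entry $|(R^{(2)}-R^{(1)})_{k\kappa}|=|E[P_{k\kappa}(X^{(2)})-P_{k\kappa}(X^{(1)})]|$, where $P_{k\kappa}:=\tilde\eta_k\tilde\eta_\kappa$ is a polynomial of degree at most $2Q$ whose coefficients depend only on the coefficients of the $\tilde\eta_k$ (and on $Q,D$).

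The core step is to estimate $E[|P_{k\kappa}(X^{(2)})-P_{k\kappa}(X^{(1)})|]$ via the fundamental theorem of calculus. Writing $\delta:=X^{(2)}-X^{(1)}$, I would use
\begin{equation*}
P_{k\kappa}(X^{(2)})-P_{k\kappa}(X^{(1)})=\int_0^1 \langle \nabla P_{k\kappa}(X^{(1)}+t\delta),\delta\rangle\,dt,
\end{equation*}
so that $|P_{k\kappa}(X^{(2)})-P_{k\kappa}(X^{(1)})|\leq |\delta|_2\int_0^1|\nabla P_{k\kappa}(X^{(1)}+t\delta)|_2\,dt$. Since $\nabla P_{k\kappa}$ is a vector of polynomials of degree at most $2Q-1$, there is a constant $c$ (depending only on the coefficients and on $Q,D$) with $|\nabla P_{k\kappa}(x)|_2\leq c(1+|x|_2^{2Q-1})$ for all $x$; bounding $|X^{(1)}+t\delta|_2^{2Q-1}\leq 2^{2Q-2}(|X^{(1)}|_2^{2Q-1}+|\delta|_2^{2Q-1})$ for $t\in[0,1]$ then yields the pointwise estimate $|P_{k\kappa}(X^{(2)})-P_{k\kappa}(X^{(1)})|\leq c'(|\delta|_2+|\delta|_2|X^{(1)}|_2^{2Q-1}+|\delta|_2^{2Q})$.

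Taking expectations and applying Jensen's inequality to the first summand, the Cauchy--Schwarz inequality to the middle summand (so that $E[|\delta|_2|X^{(1)}|_2^{2Q-1}]\leq E[|\delta|_2^2]^{1/2}E[|X^{(1)}|_2^{4Q-2}]^{1/2}$), and leaving the last summand unchanged gives precisely the two-term structure
\begin{equation*}
E[|P_{k\kappa}(X^{(2)})-P_{k\kappa}(X^{(1)})|]\leq c'\Bigl(E[|\delta|_2^2]^{1/2}(1+E[|X^{(1)}|_2^{4Q-2}]^{1/2})+E[|\delta|_2^{2Q}]\Bigr).
\end{equation*}
Collecting the factor $K$ and taking $c_{\tilde\eta}$ to be the largest of the finitely many constants $c'$ arising over the pairs $(k,\kappa)$ completes the argument. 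The main obstacle is the bookkeeping in the core step: obtaining the uniform gradient bound $|\nabla P_{k\kappa}(x)|_2\leq c(1+|x|_2^{2Q-1})$ with a constant depending only on the polynomial coefficients, and splitting the powers so that Cauchy--Schwarz extracts exactly the factor $E[|\delta|_2^2]^{1/2}$ together with the $(4Q-2)$-th moment of $X^{(1)}$, rather than producing higher moments of $\delta$.
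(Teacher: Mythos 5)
Your proposal is correct, and its analytic core coincides exactly with the paper's proof: both reduce to an entrywise bound on $R^{(2)}-\mathbb{I}_K$, obtained via the fundamental theorem of calculus applied to $\tilde\eta_k\tilde\eta_{k'}$, the polynomial gradient bound $|\nabla(\tilde\eta_k\tilde\eta_{k'})(x)|_2\leq c_\eta(1+|x|_2^{2Q-1})$, the splitting $(|X^{(1)}|_2+|\delta|_2)^{2Q-1}\leq 2^{2Q-2}(|X^{(1)}|_2^{2Q-1}+|\delta|_2^{2Q-1})$, and Cauchy--Schwarz to extract $E[|\delta|_2^2]^{1/2}$ against the $(4Q-2)$-th moment of $X^{(1)}$. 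The only point of divergence is the linear-algebra wrapper that converts entrywise control into eigenvalue control: the paper applies Gershgorin's theorem (\cite{HJ}, Theorem 6.1.1) directly to $R^{(2)}$, placing every eigenvalue in a disc of center within $c_\eta(\cdots)$ of $1$ and radius at most $(K-1)c_\eta(\cdots)$, whereas you invoke the Weyl-type perturbation bound (\cite{HJ}, Corollary 7.3.8, legitimate here since both Gram matrices are positive semi-definite) together with the row-sum estimate $\|M\|_2\leq\max_k\sum_\kappa|M_{k\kappa}|$ valid for symmetric $M$. Both routes produce the same factor $K$ and the same final inequality, so the difference is cosmetic rather than structural; if anything, your version makes explicit the intermediate quantity $\|R^{(2)}-\mathbb{I}_K\|_2$, which connects this lemma more visibly to the hypothesis-checking role it plays for Theorem \ref{thm:noiseless}, while the paper's Gershgorin argument avoids needing any norm comparison beyond the disc radii.
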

\begin{proof}
 Recall that $R^{(1)}= E[\tilde\eta(X^{(1)})\tilde\eta^\top(X^{(1)})]$ is the  identity matrix. In view of Gershgorin's theorem (\cite{HJ}, Theorem 6.1.1), it hence suffices to show that for every $k,k'=1,\ldots K$,
\begin{eqnarray*}
 &&| E[\tilde\eta_k(X^{(2)})\tilde\eta_{k'}(X^{(2)})]-E[\tilde\eta_k(X^{(1)})\tilde\eta_{k'}(X^{(1)})]|\\ &\leq& c_{\tilde\eta}  \left( E[|  X^{(1)}-X^{(2)} |^2_2]^{1/2}(1+  E[|  X^{(1)}|^{4Q-2}_2]^{1/2})+  E[|  X^{(1)}-X^{(2)} |^{2Q}_2]\right) 
\end{eqnarray*}
As $\nabla(\tilde\eta_{k}\tilde\eta_{k'})$ is a vector of polynomials of degree at most $2Q-1$ whose coefficients only depend on the coefficients of the polynomials in the system $\tilde\eta$, there is a constant $c_{\tilde\eta}$ (depending also on $Q$, $D$) such that
$$
|\nabla(\tilde\eta_{k}\tilde\eta_{k'})(x)|_2\leq c_{\tilde\eta} (1+|x|_2^{2Q-1})
$$ 
for every $k,k'=1,\ldots K$. Hence, by Jensen's and H\"older's inequality,
\begin{eqnarray*}
 && | E[\tilde\eta_k(X^{(2)})\tilde\eta_{k'}(X^{(2)})]-E[\tilde\eta_k(X^{(1)})\tilde\eta_{k'}(X^{(1)})]|\\
&=&  |\int_0^1    E[(X^{(2)}-X^{(1)})^\top \nabla(\eta_{k}\eta_{k'})(X^{(1)}+u(X^{(2)}-X^{(1)}))] du| 
\\ &\leq &   c_{\tilde\eta} E[|(X^{(2)}-X^{(1)})|_2 \,(1+(|X^{(1)}|_2+|X^{(2)}-X^{(1)}|_2)^{2Q-1})]
\\ &\leq & c_{\tilde\eta} 2^{2Q-2}  E[|(X^{(2)}-X^{(1)})|^2_2]^{1/2} (1+ E[|X^{(1)}|_2^{4Q-2}]^{1/2})+c_{\tilde\eta}2^{2Q-2} E[|(X^{(2)}-X^{(1)})|_2^{2Q}].
\end{eqnarray*}

\end{proof}

\begin{proof}[Proof of Lemma \ref{lem:eigenvalues}]
 For the eigenvalue estimates we apply the previous lemma with
$$
X^{(1)}=\frac{U_h-a_0}{h/2},\quad X^{(2)}=\frac{\tilde X_{\Delta}-a_0}{h/2}
$$
to the system of multivariate Legendre polynomials $(p_{\bf j})_{|{\bf j}|_1\leq Q}$. Since $X^{(1)}$ is uniformly distributed on the cube $(-1,1]^D$, we obtain,
\begin{eqnarray*}
 E[ p_{\bf j}(X^{(1)})p_{{\bf j}'}(X^{(1)})]=\prod_{d=1}^D \int_{-1}^1 \sqrt{\frac{2j_d+1}{2}} \sqrt{\frac{2j'_d+1}{2}}\mathcal{L}_{j_d'}(x_d)\mathcal{L}_{j_d}(x_d) dx_d ={\bf 1}_{j=j'}.
\end{eqnarray*}
Hence, $(p_{\bf j})_{|{\bf j}|_1\leq Q}$ are orthonormal with respect to the law of $X^{(1)}$. Moreover, 
\begin{eqnarray*}
 |X^{(2)}-X^{(1)}|_2&=&  2|b(U_h)c_{\textnormal{cube}}^{-1}\Delta^{1-\gamma_{\textnormal{cube}}}+ c_{\textnormal{cube}}^{-1} \sigma(U_h) \Delta^{1/2-\gamma_{\textnormal{cube}}} [\xi]_{r_2}|_2 
\\ &\leq &   \frac{2c_{b,\sigma}}{c_{\textnormal{cube}}} (\Delta^{1-\gamma_{\textnormal{cube}}} + \Delta^{1/2-\gamma_{\textnormal{cube}}} |\xi|_2).
\end{eqnarray*}
As $\gamma_{\textnormal{cube}}=\frac{\rho+|\iota|_1}{2(Q+1)}<\frac{1}{2}$, we observe that
$$
E[|X^{(2)}-X^{(1)}|_2^2 ]+  E[|X^{(2)}-X^{(1)}|_2^{2Q} ]\rightarrow 0 
$$
for $\Delta\rightarrow 0$, and, consequently, by the previous lemma, $\lambda_{\min,\Delta},\,\lambda_{\max,\Delta}\rightarrow 1$.

We now turn to the limiting behavior of $m_\Delta$. Note first that
$$
m_{\Delta}=\sup_{x\in \supp\left(X^{(2)}\right)} \sum_{|{\bf j}|_1\leq Q} \prod_{d=1}^D (2j_d+1) \, |\mathcal{L}_{j_d}\left(x_d\right)|^2.$$
As
$$
|X^{(2)}-X^{(1)}|_\infty \leq  \frac{2c_{b,\sigma} }{c_{\textnormal{cube}}} (\Delta^{1-\gamma_{\textnormal{cube}}} + \sqrt{D} \Delta^{1/2-\gamma_{\textnormal{cube}}} r_2),
$$
and 
$X^{(1)}$ is uniformly distributed on $(-1,1]^D$, the support of $X^{(2)}$ is contained in the cube 
\begin{eqnarray*}
\left[-\left(1+ \frac{2c_{b,\sigma} }{c_{\textnormal{cube}}} (\Delta^{1-\gamma_{\textnormal{cube}}} + \sqrt{D} \Delta^{1/2-\gamma_{\textnormal{cube}}} r_2)\right), 
1+\frac{2c_{b,\sigma} }{c_{\textnormal{cube}}} (\Delta^{1-\gamma_{\textnormal{cube}}} + \sqrt{D} \Delta^{1/2-\gamma_{\textnormal{cube}}} r_2)\right]^D.
\end{eqnarray*}
We now recall that the squared univariate Legendre polynomials achieve their maximum on $[-u,u]$ at $u$, if $u\geq 1$,
Hence, 
$$
m_{\Delta}\leq \sum_{|{\bf j}|_1\leq Q} \prod_{d=1}^D (2j_d+1) \, \left|\mathcal{L}_{j_d}\left( 1+\frac{2c_{b,\sigma} }{c_{\textnormal{cube}}} (\Delta^{1-\gamma_{\textnormal{cube}}} + \sqrt{D} \Delta^{1/2-\gamma_{\textnormal{cube}}} r_2)\right)\right|^2
$$
As $(1,\ldots,1) \in  \supp (X^{(2)})$, we obtain 
$$
m_\Delta \geq  \sum_{|{\bf j}|_1\leq Q} \prod_{d=1}^D (2j_d+1) \, |\mathcal{L}_{j_d}\left(1\right)|^2
$$
Consequently,
$$
\lim_{\Delta\rightarrow 0} m_{\Delta}= \sum_{{\bf j}\in \N_0^D; |{\bf j}|_1\leq Q}\; \prod_{d=1}^D (2j_d+1),
$$
because the Legendre polynomials take value one at one and $\gamma_{\textnormal{cube}}<1/2$.
\end{proof}

In view of Theorem \ref{thm:noiseless}, the following lemma is the key to control the statistical error in Algorithm \ref{alg:1}.

\begin{lem}\label{lem:statistical}
 Let 
$$
L=L_\Delta=\lceil \gamma_{\textnormal{paths}}\, c_{1,\textnormal{paths}} \log(c_{2,\textnormal{paths}}\; \Delta^{-1})\rceil
$$
for constants
 $$
\gamma_{\textnormal{paths}}>0,\quad c_{2,\textnormal{paths}}>0,\quad c_{1,\textnormal{paths}}> c^*_{\textnormal{paths}}(Q,D):=\frac{2}{3}+\frac{8}{3}\sum_{{\bf j}\in \N_0^D; |{\bf j}|_1\leq Q}\; \prod_{d=1}^D (2j_d+1).
$$
Let 
$$
\tau\in \left(0, \quad 1-\left(\frac{c^*_{\textnormal{paths}}(Q,D)}{c_{1,\textnormal{paths}}}\right)^{1/2} \right).
$$
Then, there is a constant $\Delta_0>0$ such that, for every $\Delta\leq \Delta_0$, $\epsilon_\Delta:=1-\tau/\lambda_{\min,\Delta}\in (0,1-\tau/2]$ and
$$
2K\exp\left\{-\frac{3 \epsilon_\Delta^2 L}{6 m_\Delta\lambda_{\max,\Delta} /\lambda_{\min,\Delta}^2+ 2\epsilon_\Delta(m_\Delta/\lambda_{\min,\Delta} +\lambda_{\max,\Delta} /\lambda_{\min,\Delta} )} \right\}
\leq \frac{2 {{D+Q}\choose{D}}}{c_{2,\textnormal{paths}}^{\gamma_{\textnormal{paths}}}}\Delta^{\gamma_{\textnormal{paths}}}.
$$
\end{lem}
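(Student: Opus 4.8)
The plan is to combine the convergence of the eigenvalue and sup-norm quantities from Lemma \ref{lem:eigenvalues} with the explicit logarithmic growth of $L_\Delta$, reducing everything to a single $\Delta$-free algebraic inequality that is guaranteed by the constraints on $c_{1,\textnormal{paths}}$ and $\tau$. Throughout I write $S:=\sum_{{\bf j}\in \N_0^D;\,|{\bf j}|_1\leq Q}\prod_{d=1}^D(2j_d+1)$, so that $c^*_{\textnormal{paths}}(Q,D)=\tfrac23+\tfrac83 S$ and $K=\binom{D+Q}{D}$, and by Lemma \ref{lem:eigenvalues} we have $\lambda_{\min,\Delta}\to 1$, $\lambda_{\max,\Delta}\to 1$, and $m_\Delta\to S$ as $\Delta\to 0$.

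First I would settle the claim on $\epsilon_\Delta$. Since $\lambda_{\min,\Delta}\to 1$, we have $\epsilon_\Delta=1-\tau/\lambda_{\min,\Delta}\to 1-\tau$, which is strictly positive because $\tau<1-(c^*_{\textnormal{paths}}/c_{1,\textnormal{paths}})^{1/2}<1$; hence $\epsilon_\Delta>0$ for all small $\Delta$. For the upper bound, note that $\epsilon_\Delta\leq 1-\tau/2$ is equivalent to $\lambda_{\min,\Delta}\leq 2$, which again holds for small $\Delta$ by $\lambda_{\min,\Delta}\to 1$. Both requirements are therefore met once $\Delta$ is below some threshold, which I absorb into $\Delta_0$.

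Next I would reduce the exponential estimate to an algebraic one. Writing $\gamma:=\gamma_{\textnormal{paths}}$, $c_1:=c_{1,\textnormal{paths}}$, $c_2:=c_{2,\textnormal{paths}}$ and denoting by $D_\Delta:=6m_\Delta\lambda_{\max,\Delta}/\lambda_{\min,\Delta}^2+2\epsilon_\Delta(m_\Delta/\lambda_{\min,\Delta}+\lambda_{\max,\Delta}/\lambda_{\min,\Delta})$ the denominator in the exponent, the asserted bound is equivalent (after dividing by $2K$ and taking logarithms) to
\begin{equation*}
\frac{3\epsilon_\Delta^2 L_\Delta}{D_\Delta}\;\geq\;\gamma\log(c_2\Delta^{-1}).
\end{equation*}
For $\Delta$ small enough that $\log(c_2\Delta^{-1})\geq 0$, the bound $L_\Delta\geq \gamma c_1\log(c_2\Delta^{-1})$ shows that it suffices to prove the $\Delta$-free inequality $3\epsilon_\Delta^2 c_1\geq D_\Delta$.

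Finally I would verify $3\epsilon_\Delta^2 c_1\geq D_\Delta$ by passing to the limit and invoking continuity. As $\Delta\to 0$ the left-hand side tends to $3(1-\tau)^2 c_1$, while $D_\Delta\to 6S+2(1-\tau)(S+1)$. The constraint $\tau<1-(c^*_{\textnormal{paths}}/c_1)^{1/2}$ gives $(1-\tau)^2 c_1>c^*_{\textnormal{paths}}$, hence $3(1-\tau)^2 c_1>3c^*_{\textnormal{paths}}=8S+2$; on the other hand $1-\tau<1$ yields $6S+2(1-\tau)(S+1)<8S+2$. Thus the limiting left-hand side strictly dominates the limiting right-hand side, and by the convergences of Lemma \ref{lem:eigenvalues} together with $\epsilon_\Delta\to 1-\tau$ the inequality $3\epsilon_\Delta^2 c_1\geq D_\Delta$ persists for all sufficiently small $\Delta$. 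Enlarging $\Delta_0$ to also enforce $\Delta<c_2$ (so $\log(c_2\Delta^{-1})\geq 0$) then completes the argument. The only genuinely delicate point is this last comparison: the precise form $c^*_{\textnormal{paths}}(Q,D)=\tfrac23+\tfrac83 S$ is exactly what makes the two separate bounds $3(1-\tau)^2 c_1>8S+2$ and $6S+2(1-\tau)(S+1)<8S+2$ line up, so the constant $c^*_{\textnormal{paths}}$ must be threaded carefully through the estimate rather than bounded crudely.
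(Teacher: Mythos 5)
Your proof is correct, and it shares its skeleton with the paper's: both verify $\epsilon_\Delta\in(0,1-\tau/2]$ from the convergence $\lambda_{\min,\Delta}\to 1$ (so that eventually $\lambda_{\min,\Delta}\leq 2$), both use $L_\Delta\geq \gamma_{\textnormal{paths}}\,c_{1,\textnormal{paths}}\log(c_{2,\textnormal{paths}}\Delta^{-1})\geq 0$ to reduce the exponential estimate to the $\Delta$-free inequality
\begin{equation*}
3\epsilon_\Delta^2\, c_{1,\textnormal{paths}}\;\geq\; 6 m_\Delta\lambda_{\max,\Delta}/\lambda_{\min,\Delta}^2+2\epsilon_\Delta\bigl(m_\Delta/\lambda_{\min,\Delta}+\lambda_{\max,\Delta}/\lambda_{\min,\Delta}\bigr),
\end{equation*}
and both derive that inequality from Lemma \ref{lem:eigenvalues}. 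Where you genuinely differ is in how this last inequality is established. The paper proceeds quantitatively: it introduces the unique $\beta<1$ solving $\tau=\beta\bigl(1-(c^*_{\textnormal{paths}}/c_{1,\textnormal{paths}})^{\beta/2}\bigr)$, uses $\lambda_{\min,\Delta}\geq\beta$ to get $\epsilon_\Delta\geq (c^*_{\textnormal{paths}}/c_{1,\textnormal{paths}})^{\beta/2}$, bounds each denominator term by an $(c^*_{\textnormal{paths}}/c_{1,\textnormal{paths}})^{\beta-1}$-multiple of its limiting value, and closes the chain via $3\epsilon_\Delta^2 c_{1,\textnormal{paths}}\geq (c^*_{\textnormal{paths}}/c_{1,\textnormal{paths}})^{\beta-1}(2+8m_\infty)$. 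You instead pass to the limit $\Delta\to 0$ on both sides and exploit strictness: writing $S:=\sum_{{\bf j}\in \N_0^D; |{\bf j}|_1\leq Q}\prod_{d=1}^D(2j_d+1)$, you observe $3(1-\tau)^2c_{1,\textnormal{paths}}>3c^*_{\textnormal{paths}}=8S+2>6S+2(1-\tau)(S+1)$, and convergence then transfers the strict limiting inequality to all sufficiently small $\Delta$. Your route is shorter and dispenses entirely with the auxiliary constant $\beta$; since Lemma \ref{lem:eigenvalues} supplies no rate of convergence, nothing quantitative is lost, and your computation exposes most transparently why $c^*_{\textnormal{paths}}$ has exactly the value it has (the two bounds meet precisely at $8S+2=3c^*_{\textnormal{paths}}$). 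What the paper's $\beta$-device buys in exchange is a set of explicit intermediate bounds showing how the slack in $\tau<1-(c^*_{\textnormal{paths}}/c_{1,\textnormal{paths}})^{1/2}$ is consumed, which makes the dependence of $\Delta_0$ on $\tau$ visible (if still not explicit). One trivial slip in your write-up: at the end you should shrink $\Delta_0$, not enlarge it, to enforce $\Delta\leq c_{2,\textnormal{paths}}$.
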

\begin{proof}
 We denote by $\beta<1$ the unique constant such that
$$
\tau= \beta\left(1-\left(\frac{c^*_{\textnormal{paths}}(Q,D)}{c_{1,\textnormal{paths}}}\right)^{\beta/2}\right).
$$
Let
$$
m_\infty:=\sum_{{\bf j}\in \N_0^D; |{\bf j}|_1\leq Q}\; \prod_{d=1}^D (2j_d+1).
$$
By Lemma \ref{lem:eigenvalues}, there is a $\Delta_0>0$ such that for every $\Delta\leq \Delta_0$
\begin{eqnarray*}
 \lambda_{\max,\Delta} /\lambda_{\min,\Delta}&\leq&  \left(\frac{c^*_{\textnormal{paths}}(Q,D)}{c_{1,\textnormal{paths}}}\right)^{-(1-\beta)}, \\
 m_\Delta\lambda_{\max,\Delta} /\lambda_{\min,\Delta}^2&\leq & m_\infty \left(\frac{c^*_{\textnormal{paths}}(Q,D)}{c_{1,\textnormal{paths}}}\right)^{-(1-\beta)},\\ 
 m_\Delta/\lambda_{\min,\Delta}&\leq& m_\infty \left(\frac{c^*_{\textnormal{paths}}(Q,D)}{c_{1,\textnormal{paths}}}\right)^{-(1-\beta)}, \\
\lambda_{\min,\Delta}&\in& [\beta,2],
\end{eqnarray*}
since the expressions on the left-hand side converge to $1, m_\infty, m_\infty,1$, respectively and $\left(\frac{c^*_{\textnormal{paths}}(Q,D)}{c_{1,\textnormal{paths}}}\right)^{-(1-\beta)}>1$.
The upper and lower bound on $\lambda_{\min,\Delta}$ ensure that
$$
\epsilon_\Delta \geq \left(\frac{c^*_{\textnormal{paths}}(Q,D)}{c_{1,\textnormal{paths}}}\right)^{\beta/2} >0,
$$
and $\epsilon_\Delta\leq 1-\tau/2$.
Hence,
$$
3\epsilon_\Delta^2 c_{1,\textnormal{paths}} \geq  \left(\frac{c^*_{\textnormal{paths}}(Q,D)}{c_{1,\textnormal{paths}}}\right)^{\beta-1}\left(2+8 m_\infty\right).
$$
Inserting the above inequalities and the definition of $L$ yields
\begin{eqnarray*}
&& \frac{3 \epsilon_\Delta^2 L}{6 m_\Delta\lambda_{\max,\Delta} /\lambda_{\min,\Delta}^2+ 2\epsilon_\Delta(m_\Delta/\lambda_{\min,\Delta} +\lambda_{\max,\Delta} /\lambda_{\min,\Delta} )} \geq \gamma_{\textnormal{paths}}\log(c_{2,\textnormal{paths}}\; \Delta^{-1}). 
\end{eqnarray*}
Recalling that $K={{D+Q}\choose{D}}$, the assertion follows.
\end{proof}

\begin{proof}[Proof of Theorem \ref{thm:main}]
 Write
$$
\hat y(x_1,x_2,\Theta):=\sum_{\ii\in I_\Delta} {\bf 1}_{\Gamma_i}(x_1) \sum_{k=1}^K \alpha_{L,\ii,k} \eta_{i,k}(x_2) 
$$
where the coefficients $\alpha_{L,\ii,k}$ are computed via Algorithm \ref{alg:1} and depend on the simulated samples
$$
\Theta=(U_{\ii,l},\xi_{\ii,l})_{l=1,\ldots,L;\, \ii\in I_{\Delta}}.
$$
Let $(X_1,\xi,U_\ii)_{ \ii\in I_\Delta}$ be an independent family, which is also independent of $\Theta$, and such that $X_1$ is $\mu_1$-distributed, $\xi$ is a vector of length $D$ of independent standard normals, and $U_\ii$ is uniformly distributed 
on $\Gamma_\ii$. Then,
\begin{eqnarray*}
\hat{z}(x)&:=&\sum_{\ii\in I_\Delta} {\bf 1}_{\Gamma_\ii}(x) \sum_{k=1}^K \alpha_{L,\ii,k} E[\eta_{\ii,k}(x+b(x)\Delta+\sigma(x)\sqrt{\Delta} [\xi]_{r_2}) \mathcal{H}_{\iota,\Delta}([\xi]_{r_2})] \\
\end{eqnarray*}
satisfies 
$$
\hat{z}(X_1)= E[\mathcal{H}_{\iota,\Delta}([\xi]_{r_2})  \hat y(X_1,X^{(\Delta,r_2)}_2,\Theta)|X_1,\Theta].
$$
Let 
$
\tilde \Gamma_\ii:=\{s_{\ii,K}^2 \geq \tau L\},
$
where $s_{\ii,K}$ is the smallest singular value of the random regression matrix $A_\ii$.
By Lemmas \ref{lem:trunc1} and \ref{lem:trunc2}, we obtain for sufficiently small $\Delta$,
\begin{eqnarray*}
 &&E\left[\int_{\R^D}  |E[\mathcal{H}_{\iota,\Delta}(\xi)  y(X_2)|X_1=x]-\hat{z}(x)|^2\mu_1(dx) \right]\\ &=&
E\left[  |E[\mathcal{H}_{\iota,\Delta}(\xi)  y(X_2)|X_1]-\hat{z}(X_1)|^2 \right] \\ &\leq & 2 E\left[  |E[\mathcal{H}_{\iota,\Delta}([\xi]_{r_2})  y(X_2^{(\Delta,r_2)})|X_1]-\hat{z}(X_1)|^2 \right] + 2C_1 \Delta^{2\gamma_{2,\textnormal{trunc}}/3-|\iota|_1} \\
&\leq &  2 C_2 \log(\Delta^{-1})^{D/2} \Delta^{-|\iota|_1} \max_{\ii\in I_\Delta} 
E[{\bf 1}_{\{s_{\ii,K}^2 \geq \tau L\}}| y(X^{(\Delta,r_2,i)}_2) - \hat y(U_i,X^{(\Delta,r_2,\ii)}_2,\Theta) |^2] \\ && 
+ 2 C_2  \log(\Delta^{-1})^{D/2} \max_{\ii \in I_\Delta} P(\{s_{\ii,K}^2 \geq \tau L\})
+ 2 C_2 \Delta^{\gamma_{1,\textnormal{trunc}}} + 2C_1 \Delta^{2\gamma_{2,\textnormal{trunc}}/3-|\iota|_1}
.
\end{eqnarray*}
By the choice of $\gamma_{1,\textnormal{trunc}}=\rho$ and $\gamma_{2,\textnormal{trunc}}=1.5(\rho+ |\iota|_1)$, the last two terms are of order $\Delta^\rho$. It thus remains to show that there is a constant 
$C_3\geq 0$ and a $\Delta_0>0$ such that for every $\Delta\leq \Delta_0$ and $\ii\in I_\Delta$ 
\begin{eqnarray}\label{eq:hilf008}
P(\{s_{\ii,K}^2 \geq \tau L\})&\leq& C_3 \Delta^{\rho},\nonumber\\
E[{\bf 1}_{\{s_{\ii,K}^2 \geq \tau L\}}| y(X^{(\Delta,r_2,\ii)}_2) - \hat y(U_\ii,X^{(\Delta,r_2,\ii)}_2,\Theta) |^2]&\leq& C_3 \Delta^{\rho+|\iota|_1}.
\end{eqnarray}
 Note that
$$
\hat y(U_\ii,X^{(\Delta,r_2,\ii)}_2,\Theta) = \sum_{k=1}^K \alpha_{L,\ii,k} \, \eta_{\ii,k}(X^{(\Delta,r_2,\ii)}_2),
$$
where the coefficients $(\alpha_{L,\ii,k})_{k=1,\ldots,K}$ are computed on the $\ii$th cube via Algorithm \ref{alg:2}. 
Let $\epsilon_\Delta:=1-\tau/\lambda_{\min,\Delta}$ 
Applying Lemma \ref{lem:ev_vs_spectral_norm} and  \eqref{eq:hilf002}  in conjunction with Lemma  \ref{lem:statistical}, 
there is a $\Delta_0>0$ such that for $\Delta\leq \Delta_0$
\begin{eqnarray*}
 P(\{s_{\ii,K}^2 \geq \tau L\})&\leq&  \frac{2  \|y\|_\infty^2 {{D+Q}\choose{D}}}{c_{2,\textnormal{paths}}^{\rho}}\Delta^{\rho} 
\end{eqnarray*}
Hence, the first term in \eqref{eq:hilf008} is of the required order as well.

Concerning the second term in \eqref{eq:hilf008}, we obtain in view of \eqref{eq:hilf013},
\begin{eqnarray*}
&& E[{\bf 1}_{\{s_{\ii,K}^2 \geq \tau L\}}| y(X^{(\Delta,r_2,\ii)}_2) - \hat y(U_\ii,X^{(\Delta,r_2,\ii)}_2,\Theta) |^2]\\ &\leq&
 \left(1+ \frac{\lambda_{\max,\Delta}}{\lambda_{\min,\Delta}(1-\epsilon_\Delta)}\right) \inf_{\alpha\in \R^K} E[|y(X^{(\Delta,r_2,\ii)}_2)-\sum_{k=1}^K \alpha_k \eta_{\ii,k}(X^{(\Delta,r_2,\ii)}_2)|^2]
 \end{eqnarray*}
By Lemma \ref{lem:eigenvalues}, we may assume without loss of generality that $\lambda_{\max,\Delta}\leq 2$ for $\Delta\leq \Delta_0$ (by decreasing $\Delta_0$, if necessary). Hence,  for $\Delta\leq \Delta_0$,
$$
\frac{\lambda_{\max,\Delta}}{\lambda_{\min,\Delta}(1-\epsilon_\Delta)} \leq 2\tau^{-1}.
$$
Concerning the approximation error due to the choice of the basis functions, we perform
 a Taylor expansion of order $Q$ around the center $a_\ii$ of the $\ii$th cube. Then, as in the proof of Theorem \ref{thm:polynomials2}, there is a polynomial $\mathcal{P}$ of degree at most $Q$ such that 
$$
|y(X^{(\Delta,r_2,\ii)}_2)- \mathcal{P}(X^{(\Delta,r_2,\ii)}_2)| \leq C_4 |X^{(\Delta,r_2,\ii)}_2-a_\ii|_2^{Q+1}
$$
for some constant $C_4\geq 0$, which depends only on $D$, $Q$, and the $C^{Q+1}_b$-norm of $y$. As this polynomial can be expressed as linear combination of the rescaled Legendre polynomials $\eta_{\ii,k}$, $k=1, \ldots, K$, we observe that 
$$
\inf_{\alpha\in \R^K} E[|y(X^{(\Delta,r_2,\ii)}_2)-\sum_{k=1}^K \alpha_{k} \, \eta_{\ii,k}(X^{(\Delta,r_2,\ii)}_2)|^2 ] \leq C_4^2  E[|X^{(\Delta,r_2,\ii)}_2-a_\ii|_2^{2(Q+1)}].
$$
Since,
$$
|X^{(\Delta,r_2,\ii)}_2-a_i|_2\leq \sqrt{D} \frac{h}{2}+ C_{b,\sigma}(\Delta+\sqrt{\Delta} [\xi]_{r_2}), 
$$
the term  
$$
E[|X^{(\Delta,r_2,\ii)}_2-a_\ii|_2^{2(|\iota|_1+\rho+1)}]
$$
is of the order $\Delta^{(2Q+2)\gamma_{cube}}=\Delta^{\rho+|\iota|_1}$. Thus, the proof of \eqref{eq:hilf008} is complete.
\end{proof}

\section{Outlook}

The general ideas behind the RAWBFST algorithm are not restricted to the choice of Legendre polynomials localized one time step ahead 
as basis functions. An error analysis based on Theorem \ref{thm:main} can, in principle, be carried out in the Euler scheme  setting for 
any set of
basis functions that depend on $(X_1,\xi)$. If we think of BSDE applications similar to the example in Section \ref{sec:appBSDE}, we 
can e.g. combine the generic basis choice of `regression now' with the variance reduction benefits of noiseless regression in the following way: Choose any $D+1$ sets of basis functions 
$$
\{\eta_{d,k}(x):k=1,\ldots K_d \}, \quad d=0,\ldots,D, 
$$
for a standard `regression now' approach to the approximation of the $(D+1)$ conditional expectations
\begin{equation}\label{eq:outlook}
E[y(X_2)|X_1=x],\quad E\left[\frac{\xi_d}{\sqrt{\Delta}} y(X_2)|X_1=x\right],\;d=1,\ldots, D.
\end{equation}
Instead one can run a single least-squares regression of $y(X_2)$ on the basis functions
\begin{eqnarray*}
&&\eta_{0,1}(X_1),\ldots \eta_{0,K_0}(X_1),\eta_{1,1}(X_1)\sqrt{\Delta} \xi_1,\ldots,  \eta_{1,K_1}(X_1)\sqrt{\Delta} \xi_1, \ldots, \\ && \;
\eta_{D,1}(X_1)\sqrt{\Delta} \xi_D,\ldots,  \eta_{1,K_D}(X_1)\sqrt{\Delta} \xi_D,
\end{eqnarray*}
for which the required conditional expectations (with the convention $\xi_0:=1$)
$$
E[\frac{\xi_d}{\sqrt{\Delta}} \eta_{j,k}(X_1)\sqrt{\Delta} \xi_j|X_1=x]= {\bf 1}_{\{d=j\}} \eta_{j,k}(x)
$$
are available in closed form. Hence, an analogue of RAWBFST with this set of basis functions finally approximates the conditional expectations 
in \eqref{eq:outlook} as linear combination of the same basis functions as the classical `regression now' does, but can potentially 
benefit from a similar automatic variance reduction as RAWBFST does.
While details are beyond the scope of the present paper, we note that this type of basis choice has been successfully implemented 
in the context of robust multiple stopping in the recent preprint \cite{Lal}.
This example 
illustrates the wide flexibility in the design of `regression anytime' algorithms for 
dynamic programming equations even compared to the classical `regression now' framework. 

\section*{Acknowledgements}
We thank Christian G\"artner for his contributions to unpublished predecessors of this project.

\end{document}